\author{Collin Bleak, Luke Elliott, James Hyde}
\newcommand{\image}{\operatorname{Image}}
\newcommand{\im}[1]{\image(#1)}
\newcommand{\homeo}{\operatorname{Homeo}}
\newcommand{\C}{\mathfrak{C}}
\newcommand{\autc}{\homeo(\C)}
\newcommand{\N}{\mathbb{N}}
\newcommand{\Z}{\mathbb{Z}}
\newcommand{\Cc}{K_\C}
\newcommand{\K}{\mathscr{K}}
\newcommand{\Kf}{\K^{\text{f.g.}}}
\newcommand{\Kk}[1]{\mathscr{K}_{#1}}
\newcommand{\Kkf}[1]{\K^\text{f.g.}_{#1}}
\newcommand{\Oo}[2]{[#1]_{#2}}
\newcommand{\Ooo}[1]{\mathfrak{X}_{#1}}
\newcommand{\Ss}[2]{\left\{ #1 \hspace{1mm} \middle | \hspace{1mm} #2 \right\}}
\newcommand{\Se}{\subseteq}
\newcommand{\Sne}{\subsetneq}
\newcommand{\Sn}{\subsetneq}
\newcommand{\Sm}{\setminus}
\newcommand{\An}[1]{\left\langle #1 \right\rangle}
\newcommand{\It}[1]{\operatorname{Sym}\left({#1}\right)}
\newcommand{\pstab}{\operatorname{pstab}}
\newcommand{\suppp}[1]{\operatorname{supt}\!\left(#1\right)}
\newcommand{\supt}[1]{\suppp{#1}}
\newcommand{\seteq}{:=}
\newcommand{\tooo}{\longrightarrow}
\newcommand{\fls}{flawless}
\newcommand{\vig}{vigorous}
\newtheorem{thm}{Theorem}[section]
\newtheorem{cor}[thm]{Corollary}
\newtheorem{lem}[thm]{Lemma}
\newtheorem{prop}[thm]{Proposition}
\theoremstyle{definition}
\newtheorem{defn}[thm]{Definition} 
\newtheorem{rmk}[thm]{Remark}
\newtheorem{que}[thm]{Question}
\newtheorem{exmp}[thm]{Example}
\newtheorem*{thm*}{Theorem}
\newtheorem*{defn*}{Definition}
\newtheorem*{cor*}{Corollary}
\title{Sufficient conditions for a group of homeomorphisms of the Cantor set to be two-generated
}
\begin{document}
\maketitle

{\flushleft \textit{Keywords: Full groups, Cantor space,  verbal subgroups, simple groups, finite generation.}\\
\vspace{.1 in}
\textit{MSC} (2010): Primary: 20F38, 37B05; Secondary: 20F65,
20E32}
\begin{abstract} We introduce and study two conditions on groups of homeomorphisms of Cantor space, namely the conditions of being vigorous and of being flawless.  These concepts are dynamical in nature, and allow us to study a certain interplay between the dynamics of an action and the algebraic properties of the acting group. %We study groups of homeomorphisms of a Cantor space $\C$ which are vigorous, or, which are flawless, where we introduce both of these terms here.  
A group $G\leq \autc$ is \emph{vigorous} if for any clopen set $A$ and proper clopen subsets $B$ and $C$ of $A$ there is $\gamma \in G$ in the pointwise-stabiliser of $\C\backslash A$ with $B\gamma\subseteq C$.  A non-trivial group $G\leq \autc$ is \emph{flawless} if for all $k$ and $w$ a non-trivial freely reduced product expression on $k$ variables (including inverse symbols), a particular subgroup $w(G)_\circ$ of the verbal subgroup $w(G)$ is the whole group.  We show: 1) {simple \vig{}} groups are either two-generated by torsion elements, or not finitely generated, 2) flawless groups are both perfect and lawless, 3) vigorous groups are simple if and only if they are flawless, and, 4) the class of vigorous simple subgroups of $\autc$ is fairly broad (the class is closed under various natural constructions and contains many well known groups such as the commutator subgroups of the Higman--Thompson groups $G_{n,r}$, the Brin-Thompson groups $nV$, R\"{o}ver's group $V(\Gamma)$, and others of Nekrashevych's `simple groups of dynamical origin').

\end{abstract}
\tableofcontents
\thispagestyle{empty}

\section{Introduction}

The papers \cite{Higman,Epstein, Ling84} provide conditions on a permutation group, or on a group of homeomorphisms, which imply simplicity of the commutator subgroup of that group.  Two of the core conditions in those works are that the group is generated by elements with ``small'' support (with differing definitions of the concept of small), and that the group acts nearly transitively (in the sense that given small sets can be sent into target small sets using elements which act as the identity off of some containing set which itself is ``small enough'').  Taken at a broad level of interpretation, one sees that these ideas also play a central role for the same result for large enough finite symmetric groups.

In this article we explore some consequences of a similar set of conditions on groups of homeomorphisms of Cantor space $\C$, by which we mean any space homeomorphic to $\{0,1\}^\omega$, the countably infinite product of the discrete space $\{0,1\}$ with itself.  The results here are new, but some of them bear comparison with results of Matui \cite{Matui06,Matui12,Matui13,Matui16} and of Nekrashevych \cite{Nekrashevych} for different (but related) families of groups.

A central new result is our Theorem \ref{thm:sub2}: for a large family $\Kf$ of finitely generated simple groups, each group in $\Kf$ is actually two-generated by torsion elements.  We are unaware of a similar result in the literature.  Note that this result applies to all of the known simple groups arising out of generalising the construction of the R. Thompson group $V$ (e.g., the commutator subgroups of The Higman--Thompson groups $G_{n,r}$ \cite{Higman}, the Brin-Thompson groups $nV$ \cite{Brin}, 
the R\"over group $V(\Gamma)$ \cite{Roever}, and many others of the `simple groups of dynamical origin' of Nekrasheych \cite{Nekrashevych}).  Specifically, we believe this theorem provides the first proof that R\"over's group $V(\Gamma)$ is $2$-generated.  Finally, Belk and Zaremsky in \cite{BelkZaermski2020} employ Theorem \ref{thm:sub2} as part of their argument which gives a quasi-isometric embedding of any finitely generated group into a two-generated simple group.

We also derive dynamical conditions guaranteeing simplicity, and our conditions determine that the groups involved satisfy no laws, and indeed, the stronger condition that they are mixed identity free.

Part of our motivation has been the well known open question as to whether there are any finitely presented simple groups which are not two generated (see \cite{GUBA} for finitely generated simple groups with every two-generated subgroup being a free group).  Thus, our results here exclude many natural candidate groups found within the setting of groups of homeomorphisms of Cantor space.  Somewhat artificially, we observe the well known fact (see Proposition \ref{prop:acige}) that any countably infinite group can be realised as a group of homeomorphisms of Cantor space. (Although, the easy proof we give of this produces groups which do not satisfy our dynamical condition.)

\subsection{Freedom of action}
In this paper we follow the notational convention occuring in many papers on groups of homeomorphisms of a space, and in particular, in many papers in the R. Thompson groups literature (see, e.g.,\cite{Brin}).  Specifically, standard operators usually act on the left (such as ``Homeo($\cdot$)'' and ``supt($\cdot$)'', below), but {\bf group actions and created functions in the text will act on the right}.  E.g., given an element $\gamma\in \autc$ we denote by $\supt{\gamma}$ the \emph{support of $\gamma$}, that is, the set of all points which are actually moved by $\gamma$: 
\[
\supt{\gamma}\seteq \{p\in \C\mid p\gamma\neq p\}.
\]
An example of this usage is that we also follow the convention established in the literature for verbal subgroups, where the sets $w[G]$ and $w(G)$ (defined below in Subsection \ref{subsec:wordLaw}) are naturally thought of as examples of ``left'' action notation.  Two standard consequences of using right action notation are the following formulae:
\begin{align*}
h^g &:= g^{-1}hg; \textrm{ and }\\
[g,h] &:= g^{-1}h^{-1}gh = (h^{-1})^{g} \cdot h = g^{-1} \cdot g^h.
\end{align*}

\begin{defn} \label{move}
We will say that a subset $S$ of $\autc$ is \emph{\vig{}} if and only if for all $A,B,C$ clopen subsets of $\C$ with $B$ and $C$ proper (non-empty) subsets of $A$ there exists $\gamma$ in $S$ with $\suppp{\gamma} \Se A$ and $B\gamma \Se C$.
\end{defn}
Note that in the above definition, the requirement that \(B, C\) be proper subsets of \(A\) cannot be removed. For example, in the case that \(C\subsetneq B=A = \C\), there is no \(\gamma\) with \(B\gamma \subseteq C\). Consequently we cannot replace \(A\) with \(B \cup C\).

We use the word \vig{} because it is evocative of thoroughly mixing the Cantor set.  The following lemma is immediate from the definition of vigorous.

\begin{lem} \label{Vcsg}
If $F$ and $G$ are subgroups of $\autc$ with $F$ \vig{} and contained in $G$ then $G$ is also \vig{}.
\end{lem}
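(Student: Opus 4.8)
The plan is to unwind Definition \ref{move} directly and observe that the property of being \vig{} is monotone under passing to overgroups. The defining condition is a statement of the form ``for every admissible configuration of clopen sets there exists a witnessing element with prescribed support and image behaviour,'' and the only content needed here is that a witness found inside $F$ continues to be a witness inside any group containing $F$.

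First I would fix an arbitrary triple $A, B, C$ of clopen subsets of $\C$ with $B$ and $C$ proper non-empty subsets of $A$; these are precisely the configurations quantified over in the definition of \vig{}. Applying the hypothesis that $F$ is \vig{} to this triple produces some $\gamma \in F$ with $\supt{\gamma} \Se A$ and $B\gamma \Se C$. Next I would invoke the containment $F \leq G$ to conclude that this same $\gamma$ lies in $G$. Since $\gamma$ already satisfies both $\supt{\gamma} \Se A$ and $B\gamma \Se C$, it serves as the required witness for $G$ on the triple $A, B, C$. As the triple was arbitrary, $G$ satisfies the defining condition and is therefore \vig{}.

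I do not anticipate any genuine obstacle: the argument is essentially a one-line set inclusion, and the only point requiring care is to match the quantifier structure of Definition \ref{move} exactly, so that the element exhibited for $F$ is recognised as an element exhibited for $G$. No property of $F$ or $G$ beyond the containment and the definition of \vig{} is used, which is why the lemma is stated as immediate.
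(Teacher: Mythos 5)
Your proof is correct and is exactly the argument the paper has in mind: the paper states this lemma without proof as ``immediate from the definition of vigorous,'' and your unwinding of Definition \ref{move} --- a witness $\gamma \in F$ for a triple $A,B,C$ remains a witness in any overgroup $G \geq F$ --- is precisely that immediate argument.
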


The reader can check that R. Thompson's group $V_2$ is a \vig{} group (see Subsection \ref{subsec:HigThomp} for a definition of $V_2$).  Now by Lemma \ref{Vcsg}, we see that R\"over's group \cite{Roever} and the finitely generated simple overgroups of $V_2$ constructed in \cite{Nekrashevych} are vigorous as well.

\subsection{The word is law}\label{subsec:wordLaw}
Let  $F_j$ be a free group on symbols $x_1$, \ldots , $x_j$ and G be a group with $w = a_1a_2\dots a_m$ a word in the letters $\{x_1,... ,x_j\}^{\pm1}$.  Consider the function $w:G^j\to G$ deﬁned by mapping the element $(g_1,\ldots ,g_j)\in G^j$ to the element of $G$ obtained by substituting $g_i$ for every occurrence of $x_i$ in $w$. If we set $$w[G]\seteq\im{w}$$ and $$w(G)\seteq \langle w[G]\rangle,$$ then respectively the set $w[G]$ is called the \emph{verbal subset for $w$} and $w(G)$ is the \emph{verbal subgroup for $w$}.

It is immediate that verbal subgroups are characteristic, and a standard first example is given by taking $k = 2$ and $w = x_1^{-1}x_2^{-1}x_1x_2$, where $w(G)$ is then the commutator subgroup of $G$.  See \cite{MKS} for more on verbal subgroups.

We will denote the set of non-empty proper clopen subsets of $\C$ by $\Cc$ throughout.  Note that $A\in \Cc$ if and only if $\C\backslash A\in \Cc$.  If $G\leq \autc$ and $A\in \Cc$ then we use  $\pstab_G(A)$ to denote the elements in $G$ which pointwise-stabilise the set A. For $G\leq \autc$ we add a subscript $\circ$ to modify our verbal notations as follows:
    
   \[w[G]_\circ\seteq \bigcup_{A\in \Cc}w[\pstab_G(A)],\quad\quad w(G)_\circ\seteq \langle w[G]_\circ\rangle.\]

Recall that a group $G$ \emph{satisfies a law} if there is a non-trivial freely reduced word $w$ so that $w(\alpha_1,\alpha_2,\ldots,\alpha_j)=1_G$  for any choice of $(\alpha_1,\alpha_2,\ldots,\alpha_j)\in G^j$.  E.g, abelian groups satisfy a law since $w=x_1^{-1}x_2^{-1}x_1x_2$ evaluates to $1_G$ for all choices of values for the variables $x_1$ and $x_2$.  The law that is satisfied is typically stated as the equation in the variables $x_1$, $x_2$, $\ldots$, $x_j$ that is satisfied for all choices of substituting group elements. I.e., $w=1_G$.

If a group satisfies no laws we say it is \emph{lawless}.  Our first theorem is as follows.

\begin{restatable*}{thm}{vigLaw}\label{thm:vigLaw}
Let $G\leq \autc$. If $G$ is vigorous then $G$ is lawless.
\end{restatable*}

In fact, as pointed out by a helpful referee, a stronger theorem holds.

Let $G$ be a group, and $F$ a nontrivial free group. A mixed identity
for $G$ is $w\in G*F$ such that $1_G=(w)\phi$ for 
every homomorphism $\phi: G*F\to G$ such that 
$\phi|_G$ is the identity. A mixed identity $w$ is non-trivial if $w$ is not 
the identity element of $G*F$. The group $G$ is mixed identity free if 
it has no nontrivial mixed identity.  See \cite{Anashin, HullOsin} for more details.

\begin{restatable*}{thm}{vigMIF}\label{thm:vigMIF}
Let $G\leq \autc$. If $G$ is vigorous then $G$ is mixed identity free.
\end{restatable*}

\begin{defn} \label{gen}
A subgroup $G$ of $\autc$ is \emph{\fls{}} if and only if $G$ is equal to $w(G)_\circ$ for every non-trivial freely reduced word \(w\).
\end{defn}
While it might seem that (outside of the trivial subgroup) flawless groups ought to be hard to discover amongst the subgroups of $\autc$, in fact many well known groups of homeomorphisms of Cantor space are flawless.

We now state a lemma where each result is immediate from the definition of flawless.
\begin{restatable}{lem}{flsFacts} \label{lem:flsFacts}
Let $G\leq \autc$ be non-trivial and flawless.  Then
\begin{enumerate}
    \item $G$ is lawless,
    \item \label{lem:flsPerfect} $G$ is perfect,
    \item $G=w(G)$ for any non-trivial freely reduced word \(w\).
\end{enumerate}
\end{restatable}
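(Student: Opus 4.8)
The plan is to derive all three items from a single containment: for every freely reduced word $w$ we have $w(G)_\circ \leq w(G) \leq G$. This is immediate from the definitions, since each $\pstab_G(A)$ with $A \in \Cc$ is a subgroup of $G$, so substituting its elements into $w$ produces elements of $w[G]$; hence $w[G]_\circ = \bigcup_{A \in \Cc} w[\pstab_G(A)] \subseteq w[G]$, and passing to generated subgroups gives $w(G)_\circ \leq w(G) \leq G$. Item (3) is then obtained by combining this with Definition \ref{gen}: flawlessness gives $G = w(G)_\circ$ for every non-trivial freely reduced $w$, so the chain $G = w(G)_\circ \leq w(G) \leq G$ collapses and forces $G = w(G)$. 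Item (2) follows as the special case of (3) where $w = x_1^{-1} x_2^{-1} x_1 x_2$ is the commutator word, for which $w(G) = [G,G]$; thus $G = [G,G]$ and $G$ is perfect.

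For item (1) I would argue by contradiction. If $G$ satisfied a law, there would be a non-trivial freely reduced word $w$ with $w[G] = \{1_G\}$. By the containment above, $w[G]_\circ \subseteq w[G] = \{1_G\}$, so $w(G)_\circ = \langle \{1_G\} \rangle$ is trivial. But flawlessness asserts $G = w(G)_\circ$, which would force $G$ to be trivial, contradicting the hypothesis that $G$ is non-trivial. Hence $G$ is lawless.

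I expect essentially no genuine obstacle here, which is consistent with the remark preceding the statement that each conclusion is immediate from the definition of flawless; the entire argument is bookkeeping with the containment $w(G)_\circ \leq w(G)$ and the characterization of flawlessness. The only point deserving a moment of care is in item (1): one must observe that a law makes $w[G]$ exactly the trivial set, and therefore makes the a priori smaller set $w[G]_\circ$ trivial as well, which is precisely what lets the flawlessness identity $G = w(G)_\circ$ collapse $G$ to the trivial group.
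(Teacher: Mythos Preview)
Your proof is correct and matches the paper's approach: the paper gives no explicit proof, stating only that each item is immediate from the definition of flawless, and your argument unpacks precisely that immediacy via the containment $w(G)_\circ \leq w(G) \leq G$.
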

Lemma \ref{lem:flsFacts} motivates our choice of the term \emph{flawless}.

To state our next result, we will need several more definitions.

If $\gamma \in \autc$ fixes a proper clopen set \(A\) pointwise then its support is contained in $\C\backslash A$. So the elements in $x[G]_\circ$  (that is, $w[G]_\circ$ with word $w=x$) are precisely those homeomorphisms which admit a proper clopen set containing their support: we call these the \emph{elements of small support}.

\textbf{}\begin{defn} \label{genSmall}
A subgroup $G$ of $\autc$ is \emph{generated by its elements of small support}  if and only if  $G=x(G)_\circ$.
\end{defn}

\subsection{(Approximately) full groups}
The next definition is used in \cite{Lawsonn}.

\begin{defn} \label{def:full} We say a subgroup $G$ of $\autc$ is \emph{full} if and only if for all $\gamma_1, \ldots, \gamma_n \in G$ and $D_1, D_2, \ldots, D_n$ clopen sets partitioning $\C$ so that  $D_1 \gamma_1, D_2\gamma_2,\ldots, D_n \gamma_n$ also partitions $\C$ we have the union of partial functions
\[\bigsqcup\limits_{1\leq i\leq n} {\gamma_i}|_{D_i}\]
is also in $G$.
\end{defn}
The following is a weaker condition which turns out to be natural in the context considered here.
\begin{defn} \label{def:afull}
We will say a subgroup $G$ of $\autc$ is \emph{approximately full} if and only if whenever $\gamma_1, \ldots, \gamma_n\in G$ and $D_1, \ldots, D_n$ are clopen sets partitioning $\C$ such that $D_1 \gamma_1, D_2\gamma_2, \ldots, D_n \gamma_n$ also partitions $\C$ and $j\in \{1, \ldots, n\}$ then there exists $\chi$ in $G$ such that $\chi$ extends
$\gamma_i|_{D_i}$ for each $i \in \{1, \ldots, n\} \Sm \{j\}$. 
\end{defn}
\begin{rmk}
For the curious reader, the commutator subgroup of the Higman--Thompson group $G_{3,1}$  is an example of a group of homeomorphisms of Cantor space which is approximately full but not full (the group $G_{3,1}$ is commonly denoted by $V_3$ following a notation introduced by Brown in \cite{BrownFinite}).

We give a sketch argument here. We say that an element \(f\in V_3\) is \textit{odd} if there is \(n\in \N\) such that whenever \(p_1< \ldots< p_{k}\in \{0, 1, 2\}^*\) (in dictionary order) are such that every element of \(\{0,1, 2\}^\N\) has precisely one of \(p_1, \ldots, p_{k}\) as a prefix and \(|p_1|, |p_2|, \ldots, |p_{k}|>n\), it follows that the permutation \(\phi:\{1, \ldots, k\} \to \{ 1, \ldots, k\} \) defined by
\[(p_{(1)\phi}00\ldots)f<(p_{(2)\phi}00\ldots)f<\ldots<(p_{(k)\phi}00\ldots)f\]
is odd. We define even analogously. It is routine to verify that every element of \(V_3\) is either even or odd but not both (here we use the fact that 3 is odd).
One can then verify that the commutator subgroup $D(V_3)$ of \(V_3\) consists of precisely the even elements of \(V_3\).
The full group of $D(V_3)$ is \(V_3\), but $D(V_3)$ is only approximately full (one can change an element from odd to even by modifying it on a small clopen set).

\end{rmk}

\begin{restatable*}{thm}{aes} \label{thm:aes}
Let $G$ be a subgroup of $\autc$. 
\begin{enumerate}
\item \label{aes:1}If $G$ is approximately full then $G$ is generated by its elements of small support.
\item \label{aes:2}If $G$ is \vig{} and generated by its elements of small support, then $G$ is approximately full.
\end{enumerate}
\end{restatable*}

In particular, we have the following corollary.
\begin{cor}\label{cor:aes}
Let $G$ be a \vig{} subgroup of $\autc$.  Then, $G$ is approximately full if and only if $G$ is generated by its elements of small support.
\end{cor}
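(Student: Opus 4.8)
The plan is to derive this directly from Theorem~\ref{thm:aes}, since the corollary merely packages that theorem's two implications under the standing hypothesis that $G$ is vigorous. I would establish the two directions of the biconditional separately, invoking one part of the theorem for each.

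For the forward direction, suppose $G$ is approximately full. Then Theorem~\ref{thm:aes}(\ref{aes:1}) applies immediately to yield that $G$ is generated by its elements of small support, i.e.\ $G = x(G)_\circ$. I would emphasise that this implication does not actually require vigorousness: part~(\ref{aes:1}) holds for an arbitrary subgroup of $\autc$, so the vigorous hypothesis is dead weight in this direction.

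For the reverse direction, suppose instead that $G$ is generated by its elements of small support. Combined with the standing assumption that $G$ is \vig{}, both hypotheses of Theorem~\ref{thm:aes}(\ref{aes:2}) are then satisfied, and that part gives at once that $G$ is approximately full. Putting the two implications together produces the claimed equivalence.

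There is no genuine obstacle here, and I would not expect to introduce any new machinery: all of the substantive work is already carried out in Theorem~\ref{thm:aes}. The only conceptual point worth flagging is asymmetry in where the vigorous hypothesis is used, namely that it is needed solely to supply the extra assumption feeding into the reverse implication, whereas the forward implication is valid for every subgroup of $\autc$.
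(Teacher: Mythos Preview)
Your proposal is correct and matches the paper's treatment: the corollary is stated immediately after Theorem~\ref{thm:aes} with the phrase ``In particular, we have the following corollary'' and no separate proof, so it is intended to follow exactly as you describe by combining parts~(\ref{aes:1}) and~(\ref{aes:2}). Your remark that vigorousness is used only in the reverse direction is accurate and a nice clarification, though the paper does not make this explicit.
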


Our next result is on simplicity.  Recall that a group is simple exactly if the normal closure of any non-identity element is the whole group. 

%{\color{orange}I think we should strengthen this result as follows}

\begin{restatable*}{thm}{fes} \label{thm:fes}
Let $G$ be a \vig{} subgroup of $\autc$. Then the following are equivalent
\begin{enumerate}
\item \label{fessim} $G$ is simple,
\item \label{fesfls} $G$ is \fls{},
\item \label{feslfl} $G=w(G)_\circ$ with \(w=x_1^{-1}x_2^{-1}x_1x_2\),
\item \label{fespss} $G$ is perfect and is generated by its elements of small support, 
\item \label{fespaf} $G$ is perfect and approximately full, and
\item \label{fesmat} $G$ is the commutator subgroup of its own full group within $\autc$.
\end{enumerate}
\end{restatable*}

The reader may recall that it is already known that point (\ref{fesmat}) implies point (\ref{fessim}), above,  when the full group of $G$ is the full group of an essentially principle minimal \'etale groupoid $\mathscr{G}$.  This is a theorem of Matui in \cite{Matui15}.  Indeed, Matui's proof of that direction essentially holds up in our context, although in general our context is quite different.  For example, our class of groups contains uncountable groups.  The remaining parts of the theorem are new. 

\subsection{On finitely generated vigorous simple groups}
We next turn our attention to the class of vigorous simple subgroups of $\autc$, and in particular, to the subclass of those groups which are finitely generated.

\begin{restatable*}{thm}{subbtwo} \label{sub2}\label{thm:sub2}
Let $G \leq \autc$ be finitely generated simple and \vig{}. Let $n\in\Z$ be at least $2$. Then there exists $\sigma$ and $\zeta$ in $G$ such that $\sigma$ is of finite order and $\zeta$ is of order $n$ and $\An{\sigma,\zeta} = G$.
\end{restatable*}

\begin{rmk}
    It follows that the R\"over group $V(\Gamma)$ \cite{Roever} is $2$-generated.  R\"over actually builds an infinite family of finitely presented simple groups.  As  R\"over's constructed groups are all overgroups of R. Thompson's group $V_2$ in $\autc$, they are all vigorous.  Thus, by Theorem \ref{thm:sub2}, they are all $2$-generated by torsion elements.
\end{rmk}
\begin{defn} \label{defK}
We will write $\K$ for the family of subgroups of $\autc$ which are simple and \vig{}. We will write $\Kf$ for the family of finitely generated groups in $\K$.
\end{defn}

\subsection{An outline of what follows}
The remainder of the paper can be loosely described as follows:

In Section \ref{sec:Cantor} we give some background information on Cantor space and some well known groups of homeomorphisms of Cantor space which happen to be vigorous (i.e., the Higman--Thompson groups).

In Section \ref{sec:idInVig} we explore properties of vigorous groups: namely, that they are lawless and indeed satisfy the stronger property of being mixed identity free.  We also provide an example group that is vigorous, perfect, and lawless, but not simple (nor flawless).

In Section \ref{sec:equivConditions} we prove Theorems \ref{thm:aes} and \ref{thm:fes}.  This section has an initial part where we explore some dynamical properties of being vigorous, and two following subsections.

In Subsection \ref{subsec:full} we introduce the further concept of \emph{strongly approximately full}, and explore the relationships between the various versions of ``full'', with or without the extra property of being vigorous.  Here we prove Theorem \ref{thm:aes}.  We also touch on the 
interplay between a group being \emph{flexible} (a concept which appears in many versions of Rubin's spatial reconstruction theorem \cite{Rubin}) and being vigorous.  Specifically, we show that for a group of homeomorphisms of Cantor space, being flexible is a weaker property than being vigorous, but if the group is approximately full, then vigorous and flexible become equivalent properties.  

In the following Subsection \ref{subsec:equiv} we prove Theorem \ref{thm:fes}
 which we recall states that for a vigorous group, many different conditions are equivalent (one of these is simplicity).

In Section \ref{sec:exploreSimpleVig} we explore properties of the groups in the classes $\K$ and $\Kf$.  We show that $\K$ and $\Kf$ are closed under some natural constructions, and of course recall that $\Kf$ contains some well known groups such as all of the standard simple generalisations of R. Thompson's group $V_2$ (e.g., the commutator subgroups $D(V_{n})$ of the Higman-Thompson groups $V_{n}$ (note that $D(V_{n})=V_n$ when $n$ is even, and $D(V_{n})$ is an index two subgroup of $V_n$ when $n$ is odd), the R\"over group $V_\Gamma$, the Brin-Thompson groups $nV$, and the finitely generated simple groups of dynamical origin of Nekrashevych \cite{Nekrashevych}).  Given a simple vigorous group $G$ we introduce a constructed ``homology'' group $\Ooo{G}$ (following a model provided by Matuii which applies to our context), and we use properties of these induced groups to prove Theorem \ref{thm:sub2} that any groups in $\Kf$ are $2$-generated by torsion elements.

In Section \ref{sec:conc} we give a brief discussion of some directions for potential future work.

This paper contains parts of Chapter 3 of the dissertation \cite{HydeDiss}. The definitions of being \fls{}, \vig{} and approximately full are original to this work, though approximately full is similar to full as discussed in \cite{Matui12,Nekrashevych,Lawsonn}. As alluded above, we use the word \fls{} because of its association with both perfection and lawlessness.

We emphasise that unlike the properties of being simple or perfect the properties of being \vig{}, \fls{}, full and approximately full are dependent not just on the groups but also on their actions on $\C$.%  The following should be clear to the reader.

%BHP% consider adding in a Basic Definitions section here to include definitions of (strict?) cones and P_n and V_n.
\subsection{Acknowledgements}
We are grateful for various conversations on this topic with Jim Belk, Martin Kassabov, Mark Lawson, Francesco Matucci, Volodymyr Nekrashevych, Martyn Quick, and Nik Ru\v{s}kuc. We are also grateful for suggestions from our anonymous referees which have improved some of the results and much of the writing in this paper. The first author is also grateful for support from EPSRC grant EP/R032866/1.    

\section{On Cantor space}\label{sec:Cantor}

For natural $n>1$ we give the set $\{0,1,\ldots, n-1\}$ the discrete topology and set $\C_n\seteq \{0,1,\ldots,n-1\}^\omega$ to which we give the product topology. Specifically, we have 
\[
\C_n\seteq\{x_0x_1x_2\ldots\mid \forall i\in\N, x_i\in\{0,1,\ldots,n-1\}\}.
\]

For a given finite string $w=w_0w_1\ldots w_k$ (for some natural $k$) where each $w_i\in \{0,1,\ldots, n-1\}$, the \emph{cone at $w$} is the set
\[
w\C_{n}\seteq\Ss{w_0w_1\ldots w_kw_{k+1}w_{k+2}\ldots}{ \forall j>k, w_j \in\{0,1,\ldots,n-1\}}\subseteq \C_n.
\]

The standard basis for the topology of $\C_n$ is then the full set of such cones, which we might call the \emph{basic open cones of $\C_n$}.  

Central to our point of view is Brouwer's characterisation \cite{Brouwer} of Cantor space as a non-empty, compact Hausdorff space, without isolated points, and having a countable basis consisting of clopen sets. From this view, we see that all of the spaces $\C_n$ so defined are homeomorphic.

Now, for any given generic Cantor space $\C$ (i.e., those which are not amongst the spaces $\C_n$ defined above) we will assume an implicit homeomorphism $\phi$ to $\{0,1\}^\omega$, which then provides an explicit basis of open sets for the topology of $\C$.   For the topology of $\C_n$ we will use the basis as given above.

In the body of this paper, we will typically work with clopen sets, that is, sets which are both open and closed.  In $\C_n$ these sets are precisely those sets which can be written as unions of finitely many basic open cones.

Mostly relevant for subsets of Cantor space, but perhaps appearing elsewhere below, we use the notation \(\sqcup\) for unions of sets where the sets appearing in the union are guaranteed to be disjoint from each other (and particularly, only where it is easily clear from previous lines that the sets are disjoint).  That is, the notation is meant as a helpful reminder that the sets are disjoint, and not as a claim to that effect. 
In the cases that the sets appearing in the union may not be disjoint, or where it is not intended that the reader already knows that they are disjoint, we use the usual \(\cup\) notation.
\subsection{The Higman--Thompson groups $V_n$}\label{subsec:HigThomp}

A family of groups that arise often in what follows are the Higman--Thompson groups $G_{n,1}$ from Higman's book \cite{Higman}.  As alluded in the introduction we shall follow Brown's notation from \cite{BrownFinite} where he uses $V_n$ to represent the group $G_{n,1}$.  For a given natural $n>1$, the group $V_n$ is the group of homeomorphisms of $\C_n$ generated by the transpositions $(\alpha\;\;\beta)$, which are the homeomorphisms of $\C_n$ which transpose the points of $\C_n$ with finite prefix $\alpha$ with the corresponding points of $\C_n$ with finite prefix $\beta$, and otherwise act as the identity.  That is, if $\alpha=\alpha_0\alpha_1\ldots\alpha_k$, and $\beta=\beta_0\beta_1\ldots\beta_j$ are finite non-empty strings of symbols over the set $\{0,1,\ldots,n-1\}$ (which plays the role of an alphabet), such that neither $\alpha$ nor $\beta$ is a prefix of the other, then for all $\vec{z}\in\C_n$ we have
\[
\vec{z}\cdot(\alpha\;\;\beta)=
\left\{
\begin{array}{ll}
\vec{z} &\textrm{ if } \vec{z}\not\in \alpha\C_{n}\sqcup\beta\C_{n}\\
\beta_0\beta_1\ldots\beta_j z_{k+1}z_{k+2}\ldots & \textrm{ if } \vec{z} = \alpha_0\alpha_1\ldots\alpha_k z_{k+1}z_{k+2}\ldots\in \alpha\C_{n}\\
\alpha_0\alpha_1\ldots\alpha_k z_{j+1}z_{j+2}\ldots &\textrm{ if } \vec{z}=\beta_1\beta_1\ldots \beta_j z_{j+1}z_{j+2}\ldots\in\beta\C_{n}.
\end{array}
\right.
\] The group $V_n$ then consists of those homeomorphisms of $\C_n$ which can be thought of as finitary `prefix-exchange' maps.   The paper \cite{BCMNO} contains a corresponding definition of the groups $G_{n,r}$ using alphabets, strings, prefix maps, etc., while Higman in \cite{Higman} gave the initial definitions as groups of automorphisms of particular algebras.  We note in passing that $V_n$ is a dense subgroup of $\homeo(\C_n)$. 
 For each $n>1$ the group $V_n$ has a simple commutator subgroup $D(V_n)$, and this is the whole group if $n$ is even, or an index two subgroup if $n$ is odd.  The groups $V_n$ are finitely presented.  Thompson introduced $V\seteq V_2$ and a related group $T$ as the first examples of infinite, finitely presented simple groups in 1965 (see \cite{Thompson}).  See \cite{BleakQuick} for a presentation of $V_2$ using this generating set. 
 
We state the following result at the full level of generality of the  Higman--Thompson groups $G_{n,r}$, though we shall only prove it for the groups $V_n$ (the informed reader will see that the proof easily generalises to the broader context).
\begin{lem}\label{lem:vigHig}
The Higman--Thompson groups $G_{n,r}$ are vigorous. 
\end{lem}
\begin{proof}
Let $n>1$ be an integer and consider $V_n$ acting on $\C_{n}$.

Let $A$, $B$, $C$ be non-empty clopen sets in $\C_{n}$ so that $B$ and $C$ are proper subsets of $A$.

Let $u_1, u_2,\ldots,u_k\in\{0,1,\ldots,n-1\}^*$ be so that $B=u_1\C_n\sqcup u_2\C_n\sqcup\cdots\sqcup u_k\C_n$ (that is, these cones are pairwise disjoint 
and form a partition of $B$: note that such a decomposition exists as $B$ is compact and open, and the
cones of $\C_n$ form a basis for its topology).  Further, let $v_1,v_2,v_3\in\{0,1,\ldots,n-1\}^*$ so that 
$v_1\C_n\cap v_2\C_n=\emptyset$, 
$v_1\C_n\subseteq A\backslash B$, $v_2\C_n\subset A\backslash C$ and 
$v_3\C_n\subseteq C$. Define the elements $\gamma_1, \gamma_2, \gamma_3$ in $V_n$ as follows:
\begin{align*}
\gamma_1&=(u_1\;\;v_1u_1)(u_2\;\;v_1u_2)\cdots (u_k\;\;v_1u_k)\\
\gamma_2&=(v_1u_1\;\;v_2u_1)(v_1u_2\;\;v_2u_2)\cdots (v_1u_k\;\;v_2u_k)\\
\gamma_3&=(v_2u_1\;\;v_3u_1)(v_2u_2\;\;v_3u_2)\cdots (v_2u_k\;\;v_3u_k)
\end{align*}
It is now follows that the composition $\gamma_1\gamma_2\gamma_3$ moves $B$ into $C$, with the action fully supported in $A$.

Note that we built this element as a product of sets of (disjoint support) transpositions in three stages, as it could be the case that $C\subset B$, for instance, and so we cannot just transpose any cone in $B$ to one in $C$ without possibly creating an ill-defined map (transpositions are only defined for a pair of disjoint cones).

    \end{proof}

Unless we are discussing some explicit example group (such as within Example \ref{VFP}), from this point forward, we will generally operate in the setting of a Cantor space $\C$, without needing to explicitly reference the details of the construction of the clopen subsets of $\C$.

\section{Exploring identities for vigorous groups}\label{sec:idInVig}%%%%%%%%%%%%%%%%%%%%%%%%%%%%%%%%%%%%%%%%%

In this section we explore some properties of vigorous groups.  In particular, we show that they admit non-abelian free subgroups and hence are lawless.  We go further and show that they have the much stronger property of being mixed identity free, and also that the property of being vigorous is inherited by overgroups.  We close by giving an example of a perfect vigorous group which is not simple (and in particular, is not flawless).

Recall that we denote the set of non-empty proper clopen subsets of $\C$ by $\Cc$ throughout. 
\begin{lem}\label{lem:niceorbit}
Let $G\leq \homeo(\C)$ be vigorous and let $\{A,B,C\}\subset \Cc$ be a partition of $\C$ into three disjoint proper clopen subsets.  Then there is $\gamma\in \pstab_G(A)$ so that for all $n\in\Z\backslash \{0\}$ we have $B\gamma^n\subseteq C$.
\end{lem}
\begin{proof}
Let $A$, $B$ and $C$ be in $\Cc$ be such that $\{A,B,C\}$ is a partition of $\C$. 
Let \(D, E\in \Cc\) be such that \(\{D, E\}\) is a partition of \(C\).
Let $\gamma \in \pstab(A)$ be such that $(B \sqcup E)\gamma \Se E$. The element \(\gamma\) is as required.
\end{proof}

The following is a version of the Ping Pong Lemma similar to that appearing in   \cite{PDLH} as the Table-Tennis Lemma.  

\begin{lem}[Ping Pong Lemma]
Let $G$ be a group acting on a set $X$, let $\Gamma_1$, $\Gamma_2$ be two subgroups of $G$.  Let $\Gamma$ be the subgroup of $G$ generated by $\Gamma_1$ and $\Gamma_2$;  assume that $\Gamma_1$ contains at least three elements and $\Gamma_2$ contains at least two elements.  Assume that there exists two non-empty subsets $X_1$, $X_2$ in $X$, with $X_2$ not included in $X_1$, such that 
\begin{align*}
    (X_2)\gamma\subseteq X_1 \textrm{ for all } \gamma\in\Gamma_1\backslash \{1_\Gamma\},\\
    (X_1)\gamma\subseteq X_2 \textrm{ for all } \gamma\in\Gamma_2\backslash \{1_\Gamma\}.
\end{align*}
Then $\Gamma$ is isomorphic to the free product $\Gamma_1*\Gamma_2$.
\end{lem}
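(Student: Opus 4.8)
The plan is to exhibit the canonical homomorphism from the free product to $\Gamma$ and prove it is injective by an alternating-word (ping-pong) argument. By the universal property of the free product, the inclusions $\Gamma_1,\Gamma_2\hookrightarrow\Gamma$ induce a homomorphism $\phi\colon\Gamma_1*\Gamma_2\to\Gamma$, which is surjective since its image contains $\Gamma_1$ and $\Gamma_2$ and these generate $\Gamma$. It therefore suffices to prove $\phi$ is injective, i.e. that $\phi(w)\neq 1_\Gamma$ whenever $w$ is a non-trivial element of $\Gamma_1*\Gamma_2$. Each such $w$ has a unique reduced expression as an alternating product of non-identity elements drawn from $\Gamma_1$ and $\Gamma_2$, so I would argue according to the ``shape'' of this reduced word (whether it begins and ends in $\Gamma_1$ or $\Gamma_2$).

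The core step handles a reduced word $w=\alpha_0\beta_1\alpha_1\cdots\beta_n\alpha_n$ that both begins and ends with a non-identity element of $\Gamma_1$, so that each $\alpha_i\in\Gamma_1\setminus\{1_\Gamma\}$ and each $\beta_i\in\Gamma_2\setminus\{1_\Gamma\}$. Applying $\phi(w)$ to $X_2$ and reading the letters left-to-right under the right action, the two hypotheses give in turn $(X_2)\alpha_0\subseteq X_1$, then $(X_1)\beta_1\subseteq X_2$, then $(X_2)\alpha_1\subseteq X_1$, and so on, so that $(X_2)\phi(w)\subseteq X_1$. Because $X_2\not\subseteq X_1$ by assumption, $\phi(w)$ cannot be the identity, and hence $w\neq 1$ in the free product. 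This disposes of all words of this first shape.

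To finish I would reduce the remaining three shapes to this case by conjugation, using that conjugation preserves triviality. A word beginning and ending in $\Gamma_2$ becomes a word of the core shape after conjugating by any $a\in\Gamma_1\setminus\{1_\Gamma\}$, the conjugate being automatically reduced since no $\Gamma_1$--$\Gamma_2$ cancellation can occur. A word beginning in $\Gamma_1$ and ending in $\Gamma_2$ (the opposite shape being handled by passing to $w^{-1}$) requires conjugating by an element $a\in\Gamma_1$ chosen to avoid both $1_\Gamma$ and the inverse of the leading $\Gamma_1$-letter, so that the front of the conjugate does not collapse and the result is a reduced word of the core shape. I expect this last reduction to be the main obstacle, since it is the only place where the hypothesis $|\Gamma_1|\geq 3$ is genuinely needed: one must dodge two forbidden values in $\Gamma_1$ while still producing a non-identity conjugating element, whereas $\Gamma_2$ is only ever asked to supply a single non-identity letter and so need contain only two elements.
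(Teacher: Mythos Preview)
Your argument is the standard ping-pong proof and is correct. The paper does not actually supply a proof of this lemma: it is stated as a known result, with a reference to de~la~Harpe's book where it appears as the Table-Tennis Lemma. So there is no ``paper's own proof'' to compare against; what you wrote is essentially the textbook argument found in that reference. One tiny wording issue: in the mixed-shape reduction you say the conjugator must avoid ``the inverse of the leading $\Gamma_1$-letter,'' but under the paper's right-action convention $w^a=a^{-1}wa$ the condition is rather $a\neq\alpha_0$ (so that $a^{-1}\alpha_0\neq 1$). Either way exactly two values of $a$ are forbidden, so $|\Gamma_1|\geq 3$ suffices and your explanation of where that hypothesis enters is spot on.
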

We will now prove vigorous groups have non-abelian free subgroups through an application of the Ping-Pong Lemma.

\begin{thm}\label{thm:vigfree}
Let $G \leq \autc$ be a \vig{} group and let $A$ be in $\Cc$. Then a free group of rank $2$ embeds in $\pstab_G(A)$.
\end{thm}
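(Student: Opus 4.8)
The plan is to produce two infinite-order elements $a,b\in\pstab_G(A)$ (equivalently, with $\supt{a},\supt{b}\Se\C\Sm A$) together with a single pair of disjoint clopen sets on which suitable \emph{powers} of $a$ and $b$ play ping-pong, and then invoke the Ping Pong Lemma. Since $A\in\Cc$, the complement $D\seteq\C\Sm A$ is a nonempty clopen subset of $\C$, hence itself a Cantor space, so I may partition $D$ into as many nonempty clopen pieces as I wish. I would first fix five pairwise disjoint nonempty clopen subsets $A_1,A_2,B_1,B_2,W$ of $D$ whose union is $D$, and set $X_1\seteq A_1\cup A_2$ and $X_2\seteq B_1\cup B_2$; these are disjoint and nonempty, so in particular $X_2\not\Se X_1$, as the lemma requires.

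The elements themselves each come from a single application of vigorousness. Applying Definition \ref{move} to the triple $(D,\;D\Sm A_2,\;A_1)$ yields $a\in G$ with $\supt{a}\Se D$ and $(D\Sm A_2)a\Se A_1$; symmetrically, the triple $(D,\;D\Sm B_2,\;B_1)$ yields $b\in G$ with $\supt{b}\Se D$ and $(D\Sm B_2)b\Se B_1$. Both $a$ and $b$ then lie in $\pstab_G(A)$. The key observation is that such an $a$ has north--south dynamics: from $A_1\Se D\Sm A_2$ we get $A_1a\Se A_1$, so the clopen sets $A_1a^{k}$ decrease to a nonempty compact attractor $F^+\Se A_1$, and dually $A_2a^{-k}$ decrease to a repeller $F^-\Se A_2$, both contained in $X_1$. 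Since $(D\Sm A_2)a\Sb D\Sm A_2$ is a \emph{proper} inclusion, $a$ has infinite order; likewise $b$.

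The heart of the argument is the dynamical estimate that, because $X_2$ is compact and disjoint from both $F^+$ and $F^-$, every point of $X_2$ is forward-attracted to $F^+$ and backward-attracted to $F^-$, uniformly. Concretely, I would use the standard fact that a decreasing sequence of compact sets whose intersection lies in an open set is eventually contained in that open set: applied to $A_2a^{-k}\downarrow F^-$ this gives some $k$ with $X_2a^{k}\Se D\Sm A_2$, and applied to $(D\Sm A_2)a^{j}\downarrow F^+$ it gives $X_2a^{k+j}\Se X_1$ for all large $j$; running the same argument for $a^{-1}$ handles negative powers. Hence there is $N_a$ with $X_2a^{m}\Se X_1$ whenever $|m|\ge N_a$, and symmetrically $N_b$ with $X_1b^{m}\Se X_2$ whenever $|m|\ge N_b$. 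Setting $N\seteq\max(N_a,N_b)$ and $\Gamma_1\seteq\An{a^N}$, $\Gamma_2\seteq\An{b^N}$ (both infinite cyclic, hence of cardinality at least three and two respectively) verifies all hypotheses of the Ping Pong Lemma against $X_1,X_2$, yielding $\An{a^N,b^N}\cong\Gamma_1*\Gamma_2\cong\Z*\Z$, a free group of rank $2$ inside $\pstab_G(A)$.

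The step I expect to be the main obstacle is this uniform dynamical estimate: turning the one-shot inclusion $(D\Sm A_2)a\Se A_1$ into control over \emph{all} sufficiently high powers of $a$ acting on the disjoint set $X_2$, and arranging $a$ and $b$ simultaneously against the same pair $(X_1,X_2)$. Everything else (producing the elements, checking infinite order, and assembling the free product) is then routine. Passing to the powers $a^N,b^N$ is essential here: it is exactly what lets the two-set crossing form of the Ping Pong Lemma apply to cyclic groups, since for low powers the image $X_2a$ need not yet have been absorbed into $X_1$.
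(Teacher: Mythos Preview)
Your proof is correct, but you have made the ``main obstacle'' harder than it is: in your own setup the ping-pong inclusions hold for \emph{every} nonzero power, so the compactness/attractor argument and the passage to $a^N,b^N$ are unnecessary. Indeed, since $X_2=B_1\cup B_2\subseteq D\setminus A_2$, one application of your defining inclusion gives $X_2a\subseteq A_1\subseteq X_1$, and then $A_1a\subseteq A_1$ yields $X_2a^m\subseteq A_1\subseteq X_1$ for all $m\ge 1$. Taking complements in $D$, your inclusion $(D\setminus A_2)a\subseteq A_1$ is equivalent to $(D\setminus A_1)a^{-1}\subseteq A_2$, and since $X_2\subseteq D\setminus A_1$ the same argument gives $X_2a^{-m}\subseteq A_2\subseteq X_1$ for all $m\ge 1$. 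The analogous statements for $b$ are identical with the letters swapped. Thus $\langle a\rangle$ and $\langle b\rangle$ already satisfy the hypotheses of the Ping Pong Lemma, and $\langle a,b\rangle\cong\Z*\Z$ inside $\pstab_G(A)$.

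By comparison, the paper's proof uses a three-piece partition $\{B,C,D\}$ of $\C\setminus A$ rather than five, constructs a single contracting element $\gamma$ with $(C\cup D)\gamma\subseteq D$, and obtains the second generator as a conjugate $\gamma^{\tau^{-1}}$ by an auxiliary $\tau$ sending $B\cup D$ into $C$. The ping-pong verification there is phrased as ``each $\langle\gamma\rangle$-orbit meets $C$ at most once,'' which is exactly the all-powers statement above in different clothing. Your approach trades the conjugation trick for an extra piece $W$ and a second direct application of vigorousness; both routes are equally short once you drop the superfluous limit argument.
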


\begin{proof}

Let $B$, $C$ be in $\Cc$ such that $\{A,B,C\}$ is a partition of $\C$. 

By Lemma~\ref{lem:niceorbit}, let $\gamma \in \pstab(A)$ be such that for all \(n\neq 0\) we have $(C)\gamma^n \subseteq B$.
By Lemma~\ref{lem:niceorbit} again, let $\tau \in \pstab(A)$ be such that for all \(n\neq 0\) we have $(B)\tau^n \subseteq C$ .

  The last two sentences allow us to apply the Ping-Pong Lemma with $X:= B \sqcup C $, $\Gamma_1:=\An{\gamma}$, $\Gamma_2:=\An{\tau}$,  $X_1:=B$, and $X_2:=C$, to conclude that $\An{\tau, \gamma}$ is a free group of rank $2$.
\end{proof}

Theorem \ref{thm:vigfree} above gives the following immediate corollary as the free group of rank $2$ is lawless and lawlessness is inherited by overgroups.

\vigLaw

 We now prove an extension provided by a helpful anonymous referee, that if $G$ is vigorous, then it is mixed-identity free.

\vigMIF

\begin{proof}
     
 Note that by Remark 5.1 of \cite{HullOsin}, we only need to show the case where $F=\Z$ in the general definition of mixed identity free.
Let \(w(x) \in (G * \langle x\rangle) \backslash \{1\}\), we must find \text{\(\gamma\)} \text{\(\in\)} G such that \(w(\text{\(\gamma\)}) \neq 1\). By conjugating \(w\), we may assume
\(w = x^{n_1} g_1x^{n_2} g_2 \dots x^{n_k} g_k\)
where all the \(n_i \in \mathbb{Z} \backslash\{0\}\) and \(g_i\neq 1\).
We then find pairwise disjoint proper clopen sets \(U_i, V_i\) such that \(U_ig_i = V_i\) with \(X \backslash \text{\(\sqcup\)}_{i=1}^{k} (U_i \text{\(\sqcup\)}
V_i)\neq \varnothing\). 
Let \(V_0\) be a nonempty clopen subset of \(X \backslash \text{\(\sqcup\)}_{i=1}^{k} (U_i \text{\(\sqcup\)} V_i)\). 
Using Lemma \ref{lem:niceorbit} we find for \(i = 1, ..., k\) an element \(\text{\(\gamma\)}_i \text{\(\in\)} G \)
supported on \(V_{i-1} \text{\(\sqcup\)} U_i\) such
that \(V_{i-1}\text{\(\gamma_i\)}^{n_i} \text{\(\subseteq\)} U_i\). By construction the elements \(\gamma_1, \ldots, \text{\(\gamma\)}_k\) have disjoint supports. Let \(g=\gamma_1 \ldots \text{\(\gamma\)}_k\). It is then
straightforward to check that \((V_0)(w(g)) \subseteq V_k\), in particular \(w(g)\neq  1\) as wanted.
\end{proof} 

\begin{comment}
\begin{proof}
If $G$ is a \fls{} subgroup of $\autc$ then the set
\[\Ss{[g,h]}{g, h \in \pstab_G(A) \text{ for some non-empty clopen set } A}\]
generates $G$. Since this set is contained in the set of commutators of $G$ they must also generate $G$ as desired.
\end{proof}
\end{comment}

Noting that the verbal subgroups of non-abelian free groups are not typically the whole group, and that we can realise non-abelian free groups in $\autc$ (and indeed any countable group, see Corollary~\ref{cor:countableembeds}), we see that there are lawless subgroups of $\autc$ which are not flawless.  

The example below is of a perfect and vigorous subgroup of $\autc$ which is therefore lawless but not \fls{}. 

\begin{exmp} \label{VFP}
If $\gamma\in A_6$ and $l_1l_2l_3\dots$ is in $\{0,1,2,3,4,5\}^\omega$ then set 
\[(l_1l_2l_3\dots)\dot{\gamma} := (l_1\gamma)(l_2\gamma)(l_3\gamma)\dots\] 
and further set $\dot{A}_6$ as the copy of the alternating group $A_6$ in the group\\ $\homeo({\{0,1,2,3,4,5\}^\omega})$ generated by these functions $\dot{\gamma}$.

As $V_6$ is vigorous by Lemma \ref{lem:vigHig} we have by Lemma \ref{Vcsg} that the group $\An{V_6 \cup \dot{A}_6}$ is also \vig{}. The group $\An{V_6 \cup \dot{A}_6}$ is perfect since both $V_6$ and $\dot{A}_6$ are perfect. The group $\An{V_6 \cup \dot{A}_6}$ is not simple because $V_6$ is a normal subgroup. The group $\An{V_6 \cup \dot{A}_6}$ is therefore not \fls{} by Theorem \ref{thm:fes}. 
\end{exmp}

\section{On being vigorous}\label{sec:equivConditions}
In this section we prove Theorems \ref{thm:aes} and \ref{thm:fes}.
We begin by exploring some initial consequences of being vigorous.

\begin{lem} \label{FiN}
If $G$ is a subgroup of $\autc$ and $w$ is a word, then $w(G)_\circ \unlhd G$.
\end{lem}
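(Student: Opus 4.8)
The plan is to prove the stronger fact that the generating set $w[G]_\circ$ is itself invariant under conjugation by elements of $G$; normality of $w(G)_\circ = \An{w[G]_\circ}$ in $G$ then follows from the standard observation that the subgroup generated by a conjugation-invariant set is normal. That $w(G)_\circ \le G$ in the first place is immediate, since every element of each $\pstab_G(A)$ lies in $G$ and the word map $w$ sends tuples of elements of $G$ back into $G$.

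First I would fix $\phi \in G$ together with a generator $g \in w[\pstab_G(A)]$ for some $A \in \Cc$, so that $g = w(h_1, \dots, h_j)$ with each $h_i \in \pstab_G(A)$. Since conjugation by $\phi$ is an automorphism of $\autc$, it commutes with the word map, giving $\phi^{-1} g \phi = w(h_1^\phi, \dots, h_j^\phi)$ where $h_i^\phi \seteq \phi^{-1} h_i \phi \in G$. The next step is to track supports under conjugation: a point $p$ is moved by $\phi^{-1} h_i \phi$ exactly when $p\phi^{-1}$ is moved by $h_i$, so $\supt{h_i^\phi} = \supt{h_i}\phi$. Because $h_i$ pointwise-stabilises $A$ we have $\supt{h_i} \Se \C \Sm A$, whence $\supt{h_i^\phi} \Se (\C \Sm A)\phi = \C \Sm A\phi$, the final equality using that $\phi$ is a bijection. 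Thus each $h_i^\phi$ pointwise-stabilises $A\phi$; equivalently $\pstab_G(A)^\phi = \pstab_G(A\phi)$.

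To close the argument I would note that, as $\phi$ is a homeomorphism, $A\phi$ is clopen, and it is non-empty and proper precisely because $A$ is and $\phi$ is a bijection; hence $A\phi \in \Cc$. It follows that $\phi^{-1} g \phi = w(h_1^\phi, \dots, h_j^\phi) \in w[\pstab_G(A\phi)] \Se w[G]_\circ$. Since $g$ and $\phi$ were arbitrary, $w[G]_\circ$ is closed under conjugation by $G$, and therefore $w(G)_\circ \unlhd G$.

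The computation is entirely routine, and I do not anticipate a serious obstacle. The only points requiring genuine care are verifying that conjugation distributes across the word $w$ (so that a verbal element of $\pstab_G(A)$ is carried to a verbal element of $\pstab_G(A\phi)$) and confirming the transformation rule $\pstab_G(A)^\phi = \pstab_G(A\phi)$ via the support identity $\supt{h^\phi} = \supt{h}\phi$. Everything else is bookkeeping.
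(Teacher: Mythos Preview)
Your proof is correct and follows essentially the same approach as the paper: both show that the generating set $w[G]_\circ$ is closed under conjugation in $G$ by observing that conjugation commutes with the word map and carries $\pstab_G(A)$ to $\pstab_G(A\phi)$ (equivalently, carries elements supported on a proper clopen set to elements supported on its image). Your version is somewhat more explicit about verifying $A\phi\in\Cc$ and the support identity, but the argument is the same.
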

\begin{proof}
Let $w$ be a word in the letters \(x_1, x_2, \ldots , x_j\). If \(g_1,g_2, \ldots g_j , \in G\) are supported on \(A\in \Cc\) with \(w(g_1,g_2, \ldots , g_j)\neq 1_G\) and \(h\in G\), then \(g_1^h,g_2^h, \ldots , g_j^h\) are supported on \(Ah\) and 
\[w(g_1^h,g_2^h, \ldots , g_j^h)=(w(g_1,g_2, \ldots , g_j))^h.\]
In particular $w[G]_\circ$ is closed under conjugation so the group it generates is normal.
\end{proof}
\begin{lem} \label{ntms}
Let $\gamma\in \autc$ be non-trivial.  There exists a non-empty clopen subset $Y$ of $\C$ with $Y\gamma$ disjoint from $Y$.
\end{lem}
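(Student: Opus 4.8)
The plan is to use the only thing we know about $\gamma$, namely that it moves some point, together with the defining topological features of $\C$ — that it is Hausdorff and has a basis of clopen sets — to first separate that point from its image by a clopen set, and then to shrink that set, using continuity, so that its entire image is pushed off of it.

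Since $\gamma$ is non-trivial, there is a point $p\in\suppp{\gamma}$, so $p\gamma\neq p$. Because $\C$ is Hausdorff and its topology has a basis of clopen sets, I can separate the distinct points $p$ and $p\gamma$: concretely I would pick disjoint open sets around $p$ and $p\gamma$ and then take a basic clopen set $U$ with $p\in U$ sitting inside the neighbourhood of $p$, so that $U$ is clopen with $p\in U$ and $p\gamma\notin U$. Next I would invoke continuity. As $\gamma$ is a homeomorphism and $U$ is clopen, the set $(\C\Sm U)\gamma^{-1}$ is clopen, and it contains $p$ precisely because $p\gamma\in\C\Sm U$. I then set
\[
Y\seteq U\cap(\C\Sm U)\gamma^{-1}.
\]
This $Y$ is clopen and contains $p$, hence is non-empty; moreover $Y\gamma\Se\C\Sm U$ by construction, while $Y\Se U$, so $Y$ and $Y\gamma$ lie in complementary sets and are therefore disjoint, which is exactly what is required.

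The argument is essentially routine, and the only step meriting any care is the separation: one must use that $\C$ is totally disconnected (equivalently, that its topology admits a clopen basis) so that the two distinct points $p$ and $p\gamma$ can be separated not merely by disjoint \emph{open} sets but by a \emph{clopen} set $U$. Once $U$ is in hand, everything else is a formal consequence of the continuity of $\gamma$ and $\gamma^{-1}$. I would also note that the conclusion automatically forces $Y$ to be a proper subset of $\C$, since $Y\gamma$ is a non-empty set disjoint from $Y$, so in fact $Y\in\Cc$; this is worth recording as it is the form in which the lemma is likely to be applied.
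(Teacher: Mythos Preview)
Your argument is correct and is essentially the same as the paper's, only spelled out in full: the paper's proof is the single sentence ``Take a small enough proper clopen set around a point which is moved,'' and your construction of $Y=U\cap(\C\Sm U)\gamma^{-1}$ is exactly one way to make ``small enough'' precise.
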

\begin{proof}
%One can simply take a small enough proper clopen set around a point which is moved.

Let \(x\in \supt{\gamma}\). As \(\supt{\gamma}\) is open, there are non-empty disjoint clopen subsets \(A, B\) of \(\supt{\gamma}\) containing \(x\) and \((x)\gamma\) respectively. Let \(A'= A\cap (B)\gamma^{-1}\). Note \(A'\) is a clopen neighbourhood of \(x\). Then \((A')\gamma \subseteq B\) which is disjoint from \(A\) (and hence \(A'\)).
\end{proof}

%%%%%%%%%%%%%%%%%%%%%%%%%%%%%%%%%%%%%%%%%%%
\begin{lem}\label{lem:ssgp}
Let $G$ be a \vig{} subgroup of $\autc$. Let $U$ be in $\Cc$. Then
$$S_{G,U} \seteq \{\gamma \in G \mid \exists V\textrm{ a proper clopen subset of }U \textrm{ with }\supt{\gamma}\subseteq V\}$$
generates $\pstab_G(\mathfrak{C}\backslash U)$.
\end{lem}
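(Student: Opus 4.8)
The plan is to prove both inclusions. The containment $\An{X_{G,U,x}}\Se\pstab_G(\C\backslash U)$ is immediate, since every $\gamma\in X_{G,U,x}$ has $\supt{\gamma}\Se V\Se U$ and so fixes $\C\backslash U$ pointwise. For the reverse containment I would fix an arbitrary $\alpha\in\pstab_G(\C\backslash U)$ (so $\supt{\alpha}\Se U$) and aim to write it as a product of just two elements of $X_{G,U,x}$. The reduction I would lean on is the following: if I can produce a single $\sigma\in X_{G,U,x}$ that \emph{agrees with $\alpha$ on some non-empty clopen set $A\Se U$}, then $\alpha\sigma^{-1}$ fixes $A$ pointwise and also fixes $\C\backslash U$ pointwise, so its support lies in $U\Sm A\Sn U$; hence $\alpha\sigma^{-1}\in X_{G,U,x}$ and $\alpha=(\alpha\sigma^{-1})\sigma\in\An{X_{G,U,x}}$. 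Thus the whole problem collapses to manufacturing one element of small support matching the germ of $\alpha$ on a clopen set.

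To build $\sigma$ I would first choose $A$ small. Fixing a point $p\in U$ and a clopen neighbourhood basis $A_n\downarrow\{p\}$, the clopen sets $A_n\cup A_n\alpha$ shrink to $\{p,p\alpha\}$, so, as $U$ is infinite, for some $n$ the set $F\seteq A\cup A\alpha$ with $A\seteq A_n$ is a proper clopen subset of $U$; I then pick a clopen $V$ with $F\Sn V\Sn U$ (possible since $U\Sm F$ is a non-empty clopen subset of the Cantor set $U$ and so splits). The crucial step is to obtain, from the defining property of \vig{} groups, an element $\theta\in G$ with $\supt{\theta}\Se\C\backslash F$ (so $\theta$ fixes $F$ pointwise) and $(U\Sm F)\theta\Se V\Sm F$: this is exactly the instance of Definition \ref{move} with ambient set $\C\backslash F$, source $U\Sm F$ and target $V\Sm F$, each of which is non-empty and proper precisely because $\C\backslash U$ is non-empty. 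Then $F\theta=F\Se V$ and $(U\Sm F)\theta\Se V$, so $U\theta\Se V$, and I set $\sigma\seteq\theta^{-1}\alpha\theta$.

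Finally I would verify the two properties of $\sigma$. Since $\theta$ fixes $F=A\cup A\alpha$ pointwise, for $a\in A$ we get $a\sigma=a\theta^{-1}\alpha\theta=a\alpha\theta=a\alpha$, so $\sigma$ agrees with $\alpha$ on $A$; and $\supt{\sigma}=(\supt{\alpha})\theta\Se U\theta\Se V\Sn U$, so $\sigma\in X_{G,U,x}$. The reduction then completes the proof. I expect the main obstacle to be exactly the construction of $\theta$. Conjugation cannot by itself shrink the support of $\alpha$, since the support of a conjugate is a homeomorphic copy of $\supt{\alpha}$ and so remains ``as large as $U$'' whenever $\alpha$ has dense support in $U$; the way around this is to spend the exterior clopen set $\C\backslash U$ — available precisely because $U\in\Cc$ is a \emph{proper} clopen subset — to compress $U$ into the proper subset $V$ while pinning down the germ of $\alpha$ on $A$. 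Recognising that \vig{}ousness should be invoked on the enlarged ambient set $\C\backslash F$, rather than inside $U$, is the heart of the argument.
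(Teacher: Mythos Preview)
Your proof is correct and follows essentially the same approach as the paper's: pick a small clopen set $A$ with $A\cup A\alpha$ proper in $U$, use vigorousness (crucially exploiting the external room $\C\backslash U$) to build a conjugator, and observe that both the conjugate $\sigma=\alpha^{\theta}$ and the quotient $\alpha\sigma^{-1}$ lie in $X_{G,U,x}$. The only cosmetic difference is in how the conjugator is chosen: the paper takes $\phi$ supported on $P\cup(\C\backslash U)$ compressing $P\cup Q$ into $P$, whereas you take $\theta$ supported on $\C\backslash F$ compressing $U\setminus F$ into $V\setminus F$; your choice makes the agreement of $\sigma$ with $\alpha$ on $A$ slightly more transparent, but the underlying idea is identical.
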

\begin{proof}
Let $\eta$ in $G$ be wholly supported on $U.$ We will find $\mu$ and $\nu$ in $S_{G,U}$ such that $\mu \nu = \eta.$ Let $P \in \Cc$ be such that $P\eta^{-1} \cup P$ is a proper subset of $U.$ Let $Q \in \Cc$ be a proper subset of $\C \backslash U$.

Since $G$ is vigorous we may find $\phi \in G$ with support in $P\sqcup (\C \backslash U)$ such that $(P \sqcup Q)\phi \subseteq P$. Set $\nu \seteq \eta^\phi$. It follows that $\nu$ is wholly supported on $U \backslash (Q\phi)$, a proper clopen subset of $U$.

The homeomorphisms $\nu$ and $\eta$ must agree off $\supt{\phi}\eta^{-1} \cup \supt{\phi}$.  Note that  $\supt{\phi}\eta^{-1} \cup \supt{\phi} \Se (P\eta^{-1} \cup P)\sqcup (\C \Sm U)$. It follows that $\mu \seteq \eta\nu^{-1}$ is wholly supported on $(P\eta^{-1} \cup P)\sqcup (\C \Sm U)$.  However, $\supt{\eta}\subseteq{U}$ and $\supt{\nu} \subseteq U \Sm (Q\phi) \subseteq U$.  Therefore, $\supt{\mu}=\supt{\eta\nu^{-1}}\subseteq (P\eta^{-1} \cup P)$, a proper clopen subset of $U$.
\end{proof}

\begin{lem}\label{lem:SGDisjointGen}
Let $G\leq \autc$ be a vigorous group generated by its elements of small support.  Suppose there are disjoint sets $B,C,D\in\Cc$ partitioning $\C$.  Then $G=\An{\pstab_G(C)\cup\pstab_G(D)}$. 
\end{lem}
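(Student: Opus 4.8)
The plan is to set $H\seteq\An{\pstab_G(C)\cup\pstab_G(D)}$ and prove $H=G$. Since $G$ is generated by its elements of small support, it suffices to show that every $\eta\in G$ whose support is contained in some $U\in\Cc$ lies in $H$. The key observation is that the two given stabilisers already capture a lot: because $B,C,D$ partition $\C$, any element supported in $B\cup D=\C\Sm C$ lies in $\pstab_G(C)\Se H$, and any element supported in $B\cup C=\C\Sm D$ lies in $\pstab_G(D)\Se H$. Thus if $U\Se B\cup D$ or $U\Se B\cup C$ we are immediately done, and the real work concerns $\eta$ whose support meets both $C$ and $D$.

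My strategy for such $\eta$ is to conjugate it into one of the two stabilisers by an element of $H$. Suppose I can find $\phi\in H$ with $U\phi$ disjoint from $D$; then, as $\supt{\eta^{\phi}}=(\supt\eta)\phi\Se U\phi$, the element $\eta^{\phi}$ is supported in $\C\Sm D$, so $\eta^{\phi}\in\pstab_G(D)\Se H$, and therefore $\eta=(\eta^{\phi})^{\phi^{-1}}\in H$. To produce such a $\phi$ inside $H$ I would take it in $\pstab_G(C)$: such a $\phi$ fixes $C$ pointwise, so it suffices to move the part of $U$ lying in $B\cup D$ entirely into $B$, since then $U\phi\Se C\cup B$. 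Concretely, I apply the vigorousness of $G$ with ambient set $B\cup D$, source set $U\Sm C$, and target set $B$ to obtain $\phi\in G$ with $\supt\phi\Se B\cup D$ (hence $\phi\in\pstab_G(C)\Se H$) and $(U\Sm C)\phi\Se B$, which is exactly what is needed.

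The one point where this can fail — and the main obstacle — is the properness requirement in the definition of vigorous: I need $U\Sm C$ to be a \emph{proper} non-empty clopen subset of $B\cup D$, and this breaks precisely when $B\cup D\Se U$. I expect to resolve this by symmetry. When $B\cup D\Se U$ and $U$ also meets $C$, properness of $U$ forces $U\cap C\Sb C$, so that $U\Sm D=B\cup(U\cap C)$ is a proper non-empty clopen subset of $B\cup C$; applying vigorousness with ambient set $B\cup C$, source $U\Sm D$, and target $B$ yields $\phi'\in\pstab_G(D)\Se H$ with $U\phi'\Se B\cup D$, whence $\eta^{\phi'}\in\pstab_G(C)\Se H$ and again $\eta\in H$. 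Collecting the cases — $U$ contained in $B\cup C$ or in $B\cup D$ handled directly, $U$ meeting both $C$ and $D$ with $B\cup D\not\Se U$ handled by the first conjugation, and $B\cup D\Se U$ handled by the symmetric one — covers every $U\in\Cc$ and completes the argument.
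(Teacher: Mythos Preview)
Your proof is correct and follows essentially the same approach as the paper: conjugate an element of small support by a member of one pointwise stabiliser (obtained via vigorousness) so that its support avoids the other distinguished set. The only cosmetic differences are that the paper works with the fixed set $L=\C\Sm U$ rather than the support $U$, and disposes of your edge case $B\cup D\Se U$ by invoking the symmetry of $C$ and $D$ up front rather than by a separate case.
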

\begin{proof}

Let $L\in\Cc$ and suppose $\eta\in \pstab_G(L)$.  We will write $\eta$ as a product of elements of $\pstab_G(C)$ and $\pstab_G(D)$. 

 By the symmetry of context of $C$ and $D$, we may assume without meaningful loss of generality that $L\cap (B\sqcup C)$ is not empty (and is also proper and clopen). 

As $G$ is vigorous there is $\mu\in \pstab_G(D)$, wholly supported on $(B \sqcup C)$, with $C\mu^{-1}\subseteq L\cap(B\sqcup C)$.

Consider $\nu\seteq \eta^{\mu}$.  Observe that 
\[
\supt{\nu}= \supt{\eta}\mu\subseteq(\C\backslash{L})\mu=\C\backslash( L\mu)\subseteq \C\backslash C.
\]
In particular, we see that $\nu\in\pstab_G(C)$.  

We have found $\eta=\mu\nu\mu^{-1}$, a decomposition of $\eta$ as a product of elements from the sets $\pstab_G(C)$ and $\pstab_G(D)$, as desired.

\end{proof}

\begin{lem}\label{lem:ACD}
Let $G$ be a vigorous subgroup of $\autc$. Let $A$ be in $\Cc$ and let $C$ and $D$ be clopen subsets of $\C$ with $A$, $C$, and $D$ pairwise disjoint and with  $A\sqcup C\sqcup D\neq \C$.  Then, 
\[
\pstab_G(A)=\left\langle \pstab_G(A\sqcup C)\cup\pstab_G(A\sqcup D)\right\rangle.
\]
\end{lem}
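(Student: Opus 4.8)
The plan is to reduce the statement to Lemma~\ref{lem:SGDisjointGen} by passing to the subspace $\C\Sm A$. The containment $\An{\pstab_G(A\cup C)\cup\pstab_G(A\cup D)}\Se \pstab_G(A)$ is immediate, since any homeomorphism fixing $A\cup C$ (or $A\cup D$) pointwise in particular fixes $A$ pointwise, so the generated group lands in $\pstab_G(A)$. Moreover, if either $C$ or $D$ is empty, then one of the two generating stabilisers already equals $\pstab_G(A)$ and there is nothing left to prove; so I would assume $C$ and $D$ are both non-empty. Writing $E\seteq\C\Sm(A\cup C\cup D)$, the hypothesis $A\cup C\cup D\neq \C$ makes $E$ non-empty, so $\{C,D,E\}$ is a partition of the clopen Cantor space $\C\Sm A$ into three non-empty clopen pieces, each proper.

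Next I would regard $\pstab_G(A)$ as a group of homeomorphisms of the Cantor space $\C\Sm A$: its elements fix $A$ pointwise, hence restrict to homeomorphisms of $\C\Sm A$, and this restriction map is injective (two such elements agreeing on $\C\Sm A$ agree everywhere). To invoke Lemma~\ref{lem:SGDisjointGen} over this space I must verify its two hypotheses. First, that $\pstab_G(A)$ is vigorous on $\C\Sm A$: given clopen $A',B',C'\Se\C\Sm A$ with $B',C'$ proper non-empty subsets of $A'$, vigor of $G$ on $\C$ supplies $\gamma\in G$ with $\supt{\gamma}\Se A'\Se \C\Sm A$ and $B'\gamma\Se C'$; since $\supt{\gamma}$ misses $A$, this $\gamma$ already lies in $\pstab_G(A)$, as required. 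Second, that $\pstab_G(A)$ is generated by its elements of small support within $\C\Sm A$: this is exactly Lemma~\ref{lem:ssgp} applied with $U\seteq\C\Sm A\in\Cc$, whose conclusion is that the homeomorphisms supported in a proper clopen subset of $\C\Sm A$ generate $\pstab_G(\C\Sm U)=\pstab_G(A)$.

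With these two facts established, I would apply Lemma~\ref{lem:SGDisjointGen} to the vigorous group $\pstab_G(A)\leq\homeo(\C\Sm A)$ with the partition $\{E,C,D\}$, taking $E$ in the role of the leftover set $B$ of that lemma. This yields $\pstab_G(A)=\An{\pstab_{\pstab_G(A)}(C)\cup \pstab_{\pstab_G(A)}(D)}$, where the stabilisers are now taken inside $\pstab_G(A)$ acting on $\C\Sm A$. Finally I would translate back: an element of $\pstab_G(A)$ fixing $C$ pointwise fixes $A\cup C$ pointwise and conversely, so $\pstab_{\pstab_G(A)}(C)=\pstab_G(A\cup C)$, and likewise $\pstab_{\pstab_G(A)}(D)=\pstab_G(A\cup D)$. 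Substituting these identities gives the desired equality.

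The genuine content is entirely carried by the earlier lemmas, so I do not expect a serious obstacle; the only thing needing care is the bookkeeping of the reduction. Specifically, I must confirm that $\C\Sm A$ is itself a Cantor space so that the earlier results apply over it, that vigor and small-support generation transfer correctly to $\pstab_G(A)$ acting on $\C\Sm A$, and — the most error-prone point — that the partition $\{E,C,D\}$ is fed into Lemma~\ref{lem:SGDisjointGen} in the correct order, so that the two retained stabilisers come out as $\pstab_G(A\cup C)$ and $\pstab_G(A\cup D)$ rather than, say, $\pstab_G(A\cup E)$.
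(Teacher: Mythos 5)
Your proposal is correct and takes essentially the same route as the paper's own proof: the paper likewise passes to the Cantor space $\C\Sm A$, observes that $\pstab_G(A)$ acts vigorously there and is generated by its elements of small support via Lemma \ref{lem:ssgp}, and then concludes by Lemma \ref{lem:SGDisjointGen}. Your extra bookkeeping (verifying the transfer of vigor, handling the degenerate case where $C$ or $D$ is empty, and matching up the relative stabilisers) merely fills in details that the paper's terse proof leaves implicit.
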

\begin{proof}
Let \(B := \mathfrak{C} \backslash (A \sqcup C \sqcup D)\). Observe that the group \(\pstab_{G}(A)\) acts vigorously on the Cantor space \(\C\backslash A\). By Lemma \ref{lem:ssgp} we have that \(\pstab_{G}(A)\) is generated by its elements of small support (those with support contained in a proper and clopen subset of \(\C\backslash A\)). The result then follows from Lemma \ref{lem:SGDisjointGen}.
\end{proof}

The following lemma is useful in the proof of both of the theorems in this section.  In the statement below, note that even though the image of $\varepsilon$ is not all of $\C$, the function $f$ is still a well defined and injective homomorphism of groups.

\begin{lem} \label{lem:minime}
Let $G$ be a \vig{} subgroup of $\autc$ generated by its elements of small support.  Let $B$, $C$ and $D$ in $\Cc$ be such that $\{B,C,D\}$ is a partition of $\C$. Let $\delta \in \pstab_G(D)$ be such that $B\delta \Sn B$. Let $\varepsilon:\C \tooo \C$ be the injective function agreeing with $\delta$ on $B$ and pointwise stabilising $C \sqcup D$.

Let $f:\autc \tooo \autc$ be the function that takes $\gamma \in \autc$ to the element of $\autc$ which agrees with $\varepsilon^{-1}\gamma\varepsilon$ on $\C \varepsilon$ and fixes the rest of $\C$ pointwise. % Let $H$ be the group $\Ss{\gamma \in G}{\suppp{\gamma} \Se \C \varepsilon}$.
Then $f|_G$ is an isomorphism from $G$ to $\pstab_G(\C \Sm \C\varepsilon)$. 
\end{lem}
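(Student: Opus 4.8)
The plan is to first dispose of the statement about $f$ on all of $\autc$ recorded in the remark, and then do the real work, which is to locate the image $f(G)$ precisely inside $G$. Throughout write $E \seteq \C \Sm \C\varepsilon$, and note $E = B \Sm B\delta \in \Cc$ since $B\delta \Sn B$, and that $\varepsilon$ is a homeomorphism of $\C$ \emph{onto} the clopen set $\C\varepsilon = B\delta \cup C \cup D$.

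\textbf{The homomorphism $f$.} Since $\varepsilon$ is a homeomorphism onto $\C\varepsilon$, the map $\varepsilon^{-1}\gamma\varepsilon$ is a homeomorphism of $\C\varepsilon$, so $f(\gamma)$ (that map on the clopen set $\C\varepsilon$, the identity on the clopen set $E$) is a homeomorphism of $\C$. A direct check on the two pieces $\C\varepsilon$ and $E$ shows $f$ is a homomorphism, and $f(\gamma) = 1$ forces $\varepsilon^{-1}\gamma\varepsilon = 1$, hence $\gamma = 1$, so $f$ is injective (this is the content of the remark). In particular $f|_G$ is injective, and since every $f(\gamma)$ fixes $E$ pointwise we have $f(G) \Se \pstab_{\autc}(E)$. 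It therefore remains to prove the two inclusions $f(G) \Se G$ and $\pstab_G(E) \Se f(G)$.

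\textbf{The key construction (the crux).} I claim that for every $W \in \Cc$ there is $\theta \in G$ with $\theta|_W = \varepsilon|_W$, and for every $W' \in \Cc$ with $W' \Se \C\varepsilon$ there is $\theta' \in G$ with $\theta'|_{W'} = \varepsilon^{-1}|_{W'}$; I would prove the first, the second being symmetric. Because $\varepsilon$ agrees with $\delta$ on $B$ and with the identity on $C \cup D$, the partial map $\varepsilon|_W$ is the union of $\delta|_{W \cap B}$ and $1_G|_{W \cap (C \cup D)}$, restrictions of elements of $G$, with image $U \seteq (W \cap B)\delta \sqcup (W \cap (C \cup D))$. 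Now $G$ is approximately full by Theorem \ref{thm:aes}(\ref{aes:2}), so it is enough to complete this partial bijection to a global piecewise-$G$ bijection of $\C$ and then apply approximate fullness with one filler block chosen as the distinguished (free) block, whence the resulting $\chi \in G$ extends the remaining pieces and so $\chi|_W = \varepsilon|_W$. To complete the bijection I must carry the leftover domain $\C \Sm W$ onto the leftover image $\C \Sm U$ by a finite piecewise-$G$ bijection; both sets lie in $\Cc$ (note $E \Se \C \Sm U$, so $\C \Sm U \neq \oo$), and such a bijection exists because $G$-equidecomposability of any two members of $\Cc$ follows from vigorousness (which supplies, for any two members of $\Cc$, an element of $G$ carrying one into the other) together with the Banach--Schr\"oder--Bernstein theorem for equidecomposability.

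\textbf{The two inclusions.} For $f(G) \Se G$: as $G$ is generated by its elements of small support and $f$ is a homomorphism, it suffices to check $f(g) \in G$ when $W \seteq \supt{g} \in \Cc$; one computes $\supt{f(g)} = W\varepsilon$, and, taking $\theta \in G$ with $\theta|_W = \varepsilon|_W$, verifies using that $\theta = \varepsilon$ on $W$ and that $g$ preserves $W$ that $g^\theta$ and $f(g)$ have common support $W\varepsilon$ and agree there, so $f(g) = g^\theta \in G$. For $\pstab_G(E) \Se f(G)$: given $h \in \pstab_G(E)$, the map $\gamma \seteq \varepsilon h \varepsilon^{-1}$ is a well-defined homeomorphism of $\C$ (as $h$ preserves $\C\varepsilon$) with $f(\gamma) = h$, so it suffices to show $\gamma \in G$; by Lemma \ref{lem:ssgp} with $U = \C\varepsilon$ the group $\pstab_G(E) = \pstab_G(\C \Sm \C\varepsilon)$ is generated by its elements whose support is a proper clopen subset of $\C\varepsilon$, and since $h \mapsto \varepsilon h \varepsilon^{-1}$ is a homomorphism it is enough to treat such $h$, for which $\gamma = h^{\theta'}$ with $\theta'|_{W'} = \varepsilon^{-1}|_{W'}$, $W' \seteq \supt{h}$. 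Combining the two inclusions with injectivity yields that $f|_G$ is an isomorphism of $G$ onto $\pstab_G(E)$. The main obstacle is exactly the key construction of $\theta, \theta' \in G$; everything else is bookkeeping, and it is there that approximate fullness and the equidecomposability of clopen sets under a vigorous group do the work.
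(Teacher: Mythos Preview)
Your strategy---realise $\varepsilon|_W$ as the restriction of some $\theta\in G$ and then observe that $f(g)=g^\theta$ on elements of small support---is the right intuition, but the argument as written has two genuine gaps.

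\textbf{Circularity.} You invoke Theorem~\ref{thm:aes}(\ref{aes:2}) to obtain that $G$ is approximately full. In the paper's logical structure, however, the proof of that direction of Theorem~\ref{thm:aes} passes through Lemma~\ref{lem:minime2}, whose proof in turn invokes the present Lemma~\ref{lem:minime}. So your appeal to approximate fullness is circular.

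\textbf{The equidecomposability claim is false.} You assert that \emph{any} two members of $\Cc$ are $G$-equidecomposable with finitely many clopen pieces, via Banach--Schr\"oder--Bernstein. The BSB construction partitions the domain using the set $\bigcup_{n\ge 0} D(\gamma\gamma')^n$, which is open but typically not clopen, so BSB does not produce clopen pieces. Worse, the claim itself fails: take $G=V_3$ (which is vigorous, full, and hence generated by its elements of small support). Two proper clopen subsets of $\C_3$ lie in the same $V_3$-orbit only if the parities of their minimal cone-counts agree; so $\overline{0}$ and $\overline{0}\cup\overline{1}$ are not $V_3$-equidecomposable with clopen pieces. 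Since approximate fullness forces ``clopen-equidecomposable'' to coincide with ``same orbit'', your general claim cannot hold. (It happens that the \emph{particular} sets $\C\setminus W$ and $\C\setminus W\varepsilon$ you need \emph{are} in the same orbit---one checks $[W\varepsilon]_G=[W]_G$ in the homology group $\Ooo{G}$ of Section~\ref{c2p}---but that machinery is also downstream of this lemma.)

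\textbf{How the paper avoids this.} Rather than seeking a $\theta$ agreeing with $\varepsilon$ on an arbitrary proper clopen $W$, the paper restricts attention to two kinds of $W$ for which such a $\theta$ is available by hand: for $\gamma\in\pstab_G(C)$ one has $\supt{\gamma}\subseteq B\cup D$ and $\theta=\delta_D:=\delta$ already agrees with $\varepsilon$ there; for $\gamma\in\pstab_G(D)$ one constructs $\delta_C:=\delta^{\theta_0}$ with $\theta_0$ wholly supported in $C\cup D$ and $C\theta_0^{-1}\subseteq D$, so that $\delta_C|_B=\varepsilon|_B$ and $\delta_C$ fixes $C$. Lemma~\ref{lem:SGDisjointGen} gives $G=\langle\pstab_G(C)\cup\pstab_G(D)\rangle$, so these two cases suffice to compute $Gf$; Lemma~\ref{lem:ACD} then identifies the image as $\pstab_G(A)$. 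This bypasses both approximate fullness and any equidecomposability claim, which is why these results can then be used \emph{afterwards} to establish Theorem~\ref{thm:aes}.
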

%[N.B., even though the image of $\varepsilon$ is not all of $\C$, the element $f$ is a well defined and injective homomorphism of groups.]
\begin{proof}
Set $H\seteq \pstab_G(\C \Sm \C\varepsilon)$ and
\[S_{G,\C\varepsilon}\seteq\Ss{\gamma \in G}{\exists V\textrm{ a proper clopen subset of }\C \varepsilon \textrm{ with} \suppp{\gamma} \subseteq V}.\]

One can think of $f$ as a restriction and co-restriction of conjugation by a homeomorphism $\hat{\varepsilon}$ which is an extension of $\varepsilon$ to a larger Cantor space. Consequently, $f$ is a group monomorphism.  It remains to show $(G)f = H$.

As $G$ is vigorous it follows from Lemma \ref{lem:ssgp} that $\An{S_{G,\C\varepsilon}} = \pstab_G(\C \Sm \C\varepsilon)=H$. 

Note that $B\varepsilon$ is a non-empty clopen set properly contained in $B$.  Set $A\seteq B\Sm (B\varepsilon)$, which is therefore another proper clopen subset of $B$.  Observe further that $ \C\varepsilon= B\varepsilon \sqcup C \sqcup D$, with complement in $\C$ being $A$.  In particular, $\pstab_G(A)=H$. 

Using Lemma \ref{lem:ACD}, where the sets $A$, $C$ and $D$ in this proof are used in the same way as their namesakes in Lemma \ref{lem:ACD}, we obtain that 
\[
H=\pstab_G(A)=\langle \pstab_G(A\cup C)\cup \pstab_G(A\cup D)\rangle.
\]

By Lemma \ref{lem:SGDisjointGen} we observe that $G=\An{\pstab_G(C)\cup\pstab_G(D)}$.  It follows that $(G)f=\An{\pstab_G(C)f\cup \pstab_G(D)f}$.

Rename $\delta$ as $\delta_D$  (as it stabilises $D$ pointwise).  As $G$ is vigorous there is $\theta$ with support contained in $C\sqcup D$ so that $C\theta^{-1}\subseteq D$.
Set $\delta_C\seteq\delta_D^\theta$, observing that $\delta_C|_B=\delta_D|_B=\varepsilon|_B$ and that $\delta_C$ has support contained in $B\sqcup D$ (and thus it stabilises $C$ pointwise).

We claim that for all \(\gamma\in \pstab_G(C)\) we have \((\gamma)f = \delta_D^{-1} \gamma\delta_D\). 
Indeed, for any \(p \in \mathfrak{C}\), 
\begin{enumerate}[-]
    \item If \(p\in A\), then \(p\delta_D^{-1}
 \in C\) so \(p\delta_D^{-1}\gamma \delta_D = p = p\,(\gamma)f\).
\item If \(p\in C\), then again \(p\delta_D^{-1}
 \in C\) so \(p\delta_D^{-1}\gamma \delta_D = p = p\varepsilon^{-1} \gamma \varepsilon\) since \(\varepsilon\) is the identity on
\(C\), so by the definition of \(f\), \(p\,(\gamma)f = p\delta_D^{-1}\gamma\delta_D\).
\item If \(p \in B\varepsilon \sqcup D\), then as \(B\varepsilon \sqcup D = (B \sqcup D)\varepsilon = (B \sqcup D)\delta_D\) we have \(p\delta_D^{-1} \gamma \delta_D = p\varepsilon^{-1}\gamma \varepsilon = p\,(\gamma)f\) as \(\delta_D\) and \(\varepsilon\) agree on \(B \sqcup D\),

\end{enumerate}
Similarly, for all \(\gamma\in \pstab_G(D)\) we have \((\gamma)f = \delta_C^{-1} \gamma\delta_C\). 

We have now shown 
\begin{eqnarray*}
(G) f &=&
\An{\pstab_G(C)f\cup \pstab_G(D)f} \\
&=&
\An{\pstab_G(C)^{\delta_D}\cup\pstab_G(D)^{\delta_C}}\\
&=&
\An{\pstab_G(C \cup A)\cup\pstab_G(D \cup A)}\\
&=&
\pstab_G(A)\\
&=& H
\end{eqnarray*}
\end{proof}

\subsection{Variants of full}\label{subsec:full}

In this subsection, we prove Theorem \ref{thm:aes} and Lemma \ref{lem:flexIsVigIfAF}.  These results explore interactions between variants of the concept of full, and what more can be deduced from being vigorous in the presence of variants of being full.

Recall the definitions of full and approximately full from Definitions~\ref{def:afull} and \ref{def:full}. Here, we offer yet another variant.
\begin{defn}
We will say a subgroup $G$ of $\autc$ is \emph{strongly approximately full} if and only if, for any $D,R\in \Cc$, \(\gamma_1, \ldots, \gamma_n\in G\) and clopen sets $D_1, \ldots, D_n$ partitioning $D$ such that the clopen sets $D_1\gamma_1, \ldots, D_n\gamma_n$ partition $R$, there exists $\chi$ in $G$ such that $\chi$ extends $\gamma_i|_{D_i}$ for all $1\leq i\leq n$.
\end{defn}

\begin{rmk}
    Note that strongly approximately full subgroups of $\autc$ are also approximately full. To appease the reader's certain curiosity we show in the next paragraph that the full group of the group generated by the full shift is not strongly approximately full. In particular not all full groups are strongly approximately full.
    
Consider elements of $\{0,1\}^\Z$ as bi-infinite words. Let $f:\{0,1\}^\Z \to \{0,1\}^\Z$ be the partial map which applies the full shift to those words that have a $0$ in the $0^{th}$ coordinate (to obtain those words that have a $0$ in the $1^{st}$ coordinate) and which pointwise fixes the set $S_{11}$ of those points with a $1$ in both the $0^{th}$ and $1^{st}$ coordinates.

Any homeomorphism of \(\{0, 1\}^{\mathbb{Z}}\) extending \(f\) must map the remaining domain set (those words with $1$ in the $0^{th}$ coordinate and $0$ in the $1^{st}$ coordinate) to the remaining codomain set (those words with $0$ in the $0^{th}$ coordinate and $1$ in the $1^{st}$ coordinate). 
In particular if such an extension is in the full group of the shift, then the element of $\{0,1\}^\Z$ which is $0$ on precisely the positive coordinates (and $1$ in all other coordinates) must be mapped to a word with $0$ in the $0^{th}$ position and $1$ in the $1^{st}$ position by a power of the full shift, which is impossible.
\end{rmk}

We can now give the proof of Theorem \ref{thm:aes}.  
In the proof of the second part below we in fact show that \vig{} subgroups of $\autc$ generated by their elements of small support are strongly approximately full, which is stronger than the stated result (see the corollary).  

\aes%%%%%%%%%%%%%%%%%%% This comment is to highlight the inclusion of this restated theorem.
\begin{proof}

First assume that $G$ is approximately full.

Let $\gamma$ be a non-identity element of $G$. We will find $\alpha, \beta \in G$ of small support such that $\alpha\beta = \gamma$.

Let $A\in \Cc$ be such that $A\gamma^{-1} \cap A = \varnothing$ and $(A\gamma^{-1} \sqcup A) \neq \C$. Let $(\gamma_1, \gamma_2, \gamma_3) = (\gamma, \gamma^{-1}, 1_G)$ where $1_G$ represents the identity of $G$. Set $D_1 := A\gamma^{-1}$ and $D_{2} := A$ and $D_{3} := \C \Sm (A\gamma^{-1} \sqcup A)$. Since $G$ is approximately full we may find $\beta$ in $G$ such that $\beta$ agrees with $\gamma$ on $D_{1}=A\gamma^{-1}$ and agrees with $1_G$ on $D_{3}=\C \Sm (A\gamma^{-1} \sqcup A)$. Set $\alpha:= \gamma\beta^{-1}$.% Note that $\alpha$ pointwise stabilises $A$.

Note firstly that $\gamma$ was arbitrary (non-identity) in $G$.  Also, $\alpha$ pointwise stabilises the clopen set $A\gamma^{-1}$, while $\beta$ pointwise stabilises the clopen set $D_3$.  Finally, as $\gamma=\alpha\beta$, we have point (\ref{aes:1}).

Now assume that $G$ is vigorous and is generated by its elements of small support. We will show that $G$ is strongly approximately full, from which it follows that $G$ is approximately full.

Let $D$ and $R$ be in $\Cc$. Let $\gamma_1, \ldots, \gamma_n\in G$ and let $D_1,\ldots ,D_n$ be a partition of $D$ into clopen sets such that \(D_1 \gamma_1, \ldots, D_n\gamma_n\) forms a partition of $R$.  Set the notation, for each $1\leq i\leq n$, $R_i\seteq D_i\gamma_i$.

Let $S,T\in\Cc$ be disjoint and such that  $S\subsetneq \C \Sm D$, $T\subsetneq \C\Sm R$, $(R\cup S\cup T)\subsetneq \C$, and $D\cup S\cup T\subsetneq \C$ (it is easy to check that such a pair of sets, $S$ and $T$ with the desired properties, exists). As $G$ is vigorous there is $\tau$ in $G$ with support contained in $R\cup S\cup T$ so that $R\tau \Se S$.

We will find $\chi \in G$ such that $\chi$ extends $(\gamma_i\tau)|_{D_i}$ for each $1\leq i \leq n$. This is sufficient because then $\chi\tau^{-1}$ will extend $\gamma_i|_{D_i}$ for each $1\leq i\leq n$ .

By construction, $D\sqcup S\subsetneq \C$. Choose  $E\in\Cc$, a proper clopen subset of $\C\Sm(D\sqcup S)$, noting  that $(D\sqcup R\tau)\cap E=\varnothing$ and $D\sqcup S\sqcup E \subsetneq \C$.  

Let $i\in \{1, \ldots, n\}$.  Set $I_i\seteq D_i\sqcup  R_i\tau$.  Set $K_i = I_i\sqcup E$.

Since $G$ is generated by its elements of small support, by Lemma \ref{lem:minime}, there is a clopen set $J_i$ with $I_i\subsetneq J_i\subsetneq K_i$ and an isomorphism $f_i:G\to \pstab_G(\C\Sm J_i)$ so that we have $(\gamma_i\tau) f_i$ agrees with $\gamma_i\tau$ on the set $I_i\cap (I_{i}\tau^{-1}\gamma_i^{-1})$.  Observe that $D_i\subseteq I_i\cap (I_{i}\tau^{-1}\gamma_i^{-1})$. Further observe that $(\gamma_i \tau)f_i$ acts as the identity on the complement of $K_i$, since $J_i\subsetneq K_i$. In particular $(\gamma_i \tau)f_i$ fixes $(D\sqcup R\tau)\Sm (D_i\sqcup R_i\tau)$ pointwise.  

For each  $i\in \{1, \ldots, n\}$, set $\chi_i\seteq(\gamma_i\tau)f_i$ .

Let $\chi\seteq \chi_1\chi_2\ldots \chi_n$.
 The element $\chi$ obtained has the property that, for each $i$, $\chi|_{D_i}=(\gamma_i\tau)|_{D_i}$, as desired.
\end{proof}

\begin{cor}\label{cor:vigappfulstrongappfull}
Let $G$ be a \vig{} approximately full subgroup of $\autc$. Then $G$ is strongly approximately full. In particular for \vig{} subgroups of $\autc$ approximately full and strongly approximately full are equivalent.
\end{cor}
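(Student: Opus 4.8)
The plan is to obtain this corollary immediately by chaining the two parts of Theorem~\ref{thm:aes} together, taking care to use the stronger conclusion established in the proof of part~(\ref{aes:2}) rather than the weaker statement appearing in the theorem itself.

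First I would assume that $G$ is a \vig{} approximately full subgroup of $\autc$. Since $G$ is approximately full, part~(\ref{aes:1}) of Theorem~\ref{thm:aes} tells us directly that $G$ is generated by its elements of small support.

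Next I would invoke the strengthened form of part~(\ref{aes:2}). As remarked just before the proof of Theorem~\ref{thm:aes}, that proof in fact establishes the stronger statement that a \vig{} subgroup of $\autc$ generated by its elements of small support is \emph{strongly} approximately full. Applying this to $G$ --- which is \vig{} and, by the previous step, generated by its elements of small support --- we conclude that $G$ is strongly approximately full. This proves the first assertion of the corollary.

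For the ``in particular'' clause, I would recall the observation made immediately after the definition of strongly approximately full: every strongly approximately full subgroup of $\autc$ is also approximately full. Combined with the implication just established, this shows that for \vig{} subgroups of $\autc$ the two notions coincide. I do not expect any genuine obstacle here, since all of the real work is carried out inside the proof of Theorem~\ref{thm:aes}; the only point requiring care is to cite the stronger conclusion recorded in that proof rather than the nominally weaker conclusion stated in part~(\ref{aes:2}).
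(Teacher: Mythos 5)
Your proposal is correct and follows exactly the paper's own argument: apply part~(\ref{aes:1}) of Theorem~\ref{thm:aes} to get generation by elements of small support, then invoke the strengthened conclusion (strong approximate fullness) established in the proof of part~(\ref{aes:2}), with the converse direction of the equivalence being the trivial observation that strongly approximately full implies approximately full.
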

\begin{proof}
By the first point of Theorem \ref{thm:aes} we have that $G$ is generated by its elements of small support. When proving the second part of Theorem \ref{thm:aes} we in fact showed that if $G$ is vigorous and generated by its elements of small support, then $G$ is strongly approximately full. It follows that our group $G$ is strongly approximately full.
\end{proof}

In the proof that (\ref{fessim}) $\Rightarrow$ (\ref{fesmat}) within the proof of Theorem \ref{thm:fes} that we shall give in the next subsection, we use the concept of a flexible group.  A group $G\leq \autc$ is \emph{flexible} if given  $U$, $V\in \Cc$, there is a group element $\gamma\in G$ so that $U\gamma\subseteq V$.  We observe in Lemma \ref{lem:flexIsVigIfAF} below that being flexible is a weaker property for a group to have than being vigorous, but we also show that in the presence of approximately full, the concepts are equivalent.

\begin{lem}\label{lem:flexIsVigIfAF}
Let $G\leq \autc$ be such that $G$ is approximately full. Then $G$ is flexible if and only if $G$ is vigorous.
\end{lem}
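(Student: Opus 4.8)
The reverse implication needs no work and no approximate fullness: if $G$ is \vig{}, then for $U,V\in\Cc$ the definition of \vig{} applied with $A=\C$, $B=U$, $C=V$ gives $\gamma\in G$ with $U\gamma\Se V$, so $G$ is flexible. The plan therefore concentrates on the forward implication. Assume $G$ is flexible and approximately full, and fix clopen $A,B,C$ with $B,C$ proper non-empty subsets of $A$; the goal is to produce $\gamma\in G$ with $\supt{\gamma}\Se A$ and $B\gamma\Se C$. If $A=\C$ this is literally flexibility, so I would assume $A\in\Cc$ from here on.

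The engine of the argument is a single primitive, which I will call a \emph{disjoint move}: for any $S,T\in\Cc$ with $S,T\Se A$ and $S\cap T=\varnothing$, there is $\gamma\in G$ with $\supt{\gamma}\Se A$ and $S\gamma\Se T$. Granting the disjoint move, I would finish by routing $B$ into $C$. If $C\not\Se B$, a single disjoint move with $S=B$ and $T$ any non-empty clopen subset of $C\Sm B$ suffices. If $C\Se B$, I would pick a non-empty clopen $M\Se A\Sm B$ and use two disjoint moves — first $(S,T)=(B,M)$, then $(S,T)=(M,C)$, the latter legitimate because $M\Se A\Sm B$ and $C\Se B$ force $M\cap C=\varnothing$ — and compose them. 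In each case the product is supported in $A$ and carries $B$ into $C$.

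To prove the disjoint move I would apply flexibility to $S\cup T\in\Cc$ and a chosen proper non-empty clopen subset $T_1\Sn T$, obtaining $\delta\in G$ with $(S\cup T)\delta\Se T_1$. Then $S\delta\Se T$ and $S\delta\cap S\Se T\cap S=\varnothing$; and since $S\not\Se T_1$ we get $(S\cup T)\delta\Se T_1\Sn S\cup T$, so $S\cup T\Spn(S\cup T)\delta\Spn(S\cup T)\delta^2\Spn\cdots$ strictly descends and $\delta$ has infinite order. In particular $\delta,\delta^{-1},1_G$ are three distinct elements. I would then feed approximate fullness the family $\Gamma=\{\delta,\delta^{-1},1_G\}$ with $D_\delta=S$, $D_{\delta^{-1}}=S\delta$, and $D_{1_G}=\C\Sm(S\cup S\delta)$. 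Because $\delta^{-1}$ undoes $\delta$, both $\{D_\gamma\}_{\gamma\in\Gamma}$ and $\{D_\gamma\gamma\}_{\gamma\in\Gamma}$ are the same partition $\{S,\,S\delta,\,\C\Sm(S\cup S\delta)\}$ of $\C$, so the hypotheses hold; sacrificing $\delta^{-1}$ produces $\chi\in G$ agreeing with $\delta$ on $S$ and with $1_G$ on $\C\Sm(S\cup S\delta)$. Hence $\supt{\chi}\Se S\cup S\delta\Se A$ while $S\chi=S\delta\Se T$, which is exactly the disjoint move.

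The step I expect to be the real crux is satisfying the two partition hypotheses of approximate fullness while keeping the support inside $A$. One cannot simply take a flexibility-produced $\delta$ with $B\delta\Se C$ and truncate it to $A$: approximate fullness only fires once the image family is an honest partition of $\C$, and matching up the leftover image on $A\Sm B$ would amount to realising an exact clopen bijection there, which flexibility does not provide. The $\delta/\delta^{-1}$ swap is precisely the device that makes the domain and image families coincide automatically, and taking the identity region to be $\C\Sm(S\cup S\delta)$ (which contains $\C\Sm A$) is what confines the support to $A$. The hypothesis $S\cap S\delta=\varnothing$ is what lets $\{S,S\delta,\dots\}$ be a partition in the first place, and the routing step exists only to guarantee we can always arrange it.
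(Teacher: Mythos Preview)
Your proof is correct and uses essentially the same approach as the paper: both exploit approximate fullness with the family $\{\delta,\delta^{-1},1_G\}$ to extract an element of controlled support that agrees with a flexibility-produced $\delta$ on the set to be moved. The paper avoids your case split by first choosing a clopen $D\subseteq C$ with $B\cup D$ proper in $A$ and routing $B$ through $A\setminus(B\cup D)$ into $D$, but this is a cosmetic difference; you are in fact slightly more careful than the paper in verifying that $\delta,\delta^{-1},1_G$ are genuinely three distinct elements of $\Gamma$.
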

\begin{proof}
Let $G\leq \autc$.

If $G$ is vigorous it follows that $G$ is flexible. This follows because being vigorous allows one to move clopen sets into each other while restricting the set of other points being moved, while being flexible simply allows one to move clopen sets into each other.

Let us now suppose that $G$ is approximately full and flexible.  Let $A$, $B$, and $C\in \Cc$ with $B$ and $C$ proper subsets of $A$.  We will show there is some $\gamma\in G$ so that $B\gamma \subseteq C$ with the support of $\gamma$ fully contained in $A$. 

Choose $D\subseteq C$ with $B\cup D$ a proper clopen subset of $A$.

As $G$ is flexible, there is $\rho\in G$ so that $B\rho \subseteq A\backslash (B\cup D)$.  Consider the partition $X_1 = \{B, B\rho, \C\backslash (B\rho\cup B)\}$ of Cantor space into proper clopen sets.  Let us nominate these sets as $U_1\seteq B, U_2\seteq B\rho$, and $U_3\seteq \C\backslash (B\rho\cup B)$.  Set $\theta_1 \seteq \rho$, $\theta_2 \seteq \rho^{-1}$, and $\theta_3\seteq 1_G$ (where $1_G$ is the identity element in $G$).  The set $X_1$ corresponds to both the domain and range partitions appearing in the definition of approximately full, while the elements $\theta_1$, $\theta_2$, and $\theta_3$ are corresponding the group elements which respectively take $U_1$ to $U_2$, then $U_2$ to $U_1$, and finally $U_3$ to $U_3$.  Since $G$ is approximately full, there is $\chi_1\in G$ which agrees with $\theta_1$ on $U_1$ and $\theta_3$ on $U_3$.  We note that $B\chi_1\subseteq A\backslash (B\cup D)$ and $\chi_1$ acts as the identity over $\C\backslash (B\rho \cup B)$, and specifically, on the complement of $A$.

By an entirely similar argument, we can find a $\chi_2\in G$ which takes $A\backslash (B\cup D)$ into $D$ while pointwise fixing the complement of $A$.  The composition $\gamma=\chi_1\chi_2$ acts as the identity on the complement of $A$ and $B\gamma \subseteq D\subseteq C$.  In particular, $G$ is vigorous.
\end{proof}
\subsection{Equivalent conditions}\label{subsec:equiv}
In this subsection, we give the proof of Theorem \ref{thm:fes}.

We begin with more lemmata concerning vigorous subgroups of $\autc$.

\begin{lem} \label{lem:minime2}
Let $G$ be a \vig{} subgroup of $\autc$ generated by its own set of elements of small support. Let $I$ and $K$ in $\Cc$ be such that $I \subsetneq K$. Then there exists $J \in \Cc$ and an isomorphism $f:G \tooo \pstab_G(\C \Sm J)$ such that $I \subsetneq J \subsetneq K$ and for each $\gamma \in G$ the homeomorphism $(\gamma) f$ agrees with $\gamma$ on $I\gamma^{-1} \cap I$. 
\end{lem}
\begin{proof}
Our strategy is to first find sets $B$, $C$, and $D$ and a group element $\delta$ as in Lemma \ref{lem:minime}.  We then determine the clopen set $J$ with $I\subsetneq J \subsetneq K$ so that the $\delta$-induced isomorphism $f$ has $f:G \tooo \pstab_G(\C \Sm J)$.    Finally, for each $\gamma\in G$ and $p\in I\cap I{\gamma^{-1}}$ we verify that $p\gamma f = p \gamma$.

Let $L,L' \in \Cc$ be such that $L' \subsetneq L \subsetneq K \Sm I$.

We set $D\seteq I$ and $B \seteq L \sqcup (\C \Sm K)$.  Note $B\cap D = \emptyset$.  This allows us to determine $C$ as the complement of $B$ and $D$ in $\C$.  That is, $C\seteq \C\Sm (B\sqcup D)$.  Observe as well that as $L'\subsetneq L \subsetneq B$ we have $L$ and $L'$ are each disjoint from $C\sqcup D$.

Since $G$ is \vig{} we may find $\delta \in \pstab_G(I)$ so that $B\delta \Se L'$.

Set $A\seteq B \Sm B\delta$ and $J\seteq \C \Sm A$.  Note that 
\[
J=B\delta\sqcup C\sqcup D\subseteq L'\sqcup C\sqcup D\subsetneq L\sqcup C\sqcup D=K.
\]That is, $J\subsetneq K$.  Further note that as $D=I$ and $C$ is not empty we have $I\subsetneq J$, and we conclude that $I\subsetneq J \subsetneq K$.

We are now in the context of Lemma \ref{lem:minime}.  Let $f$ be the induced homomorphism from $\autc$ to $\autc$ which restricts to an isomorphism from $G$ to $\pstab_G(A)$.  

Recall there is a monic function $\varepsilon:\C\to\C$ which acts as the identity on $C$ and $D$, and acts as $\delta$ on $B$, and where $\gamma f = \gamma^{\varepsilon}$ on $J$ (Note that $\gamma f$ acts as the identity off \(J\)).

Now, for all points $p\in I\cap I{\gamma^{-1}}$,  we can compute $p \gamma f$.  Specifically, $p\gamma f= p\varepsilon^{-1}\gamma\varepsilon= p\gamma\varepsilon$ since $\varepsilon$ acts as the identity in $I\cap I\gamma^{-1}\subseteq D$.  As $p\in I\gamma^{-1}$ we see that $p\gamma=z\in I = D$ so we can further compute  $p\gamma f=p\gamma\varepsilon=z\varepsilon=z$.  Thus, $\gamma f$ and $\gamma$ agree over $I\cap I\gamma^{-1}$.

  %Set $J \seteq \C \Sm (L\delta \Sm L)$. Let $\varepsilon$ be the map from $\C$ which agrees with $\delta$ on $\C \Sm L$ and pointwise stabilises $L$. Let $f$ be the map from $G$ which maps $\gamma \in G$ to the  element of $\autc$ which agrees with $\varepsilon^{-1}\gamma\varepsilon$ on $\C\varepsilon$ and pointwise stabilises $\C \Sm \C\varepsilon$.

\end{proof}
\begin{lem}\label{lem:bignormal}
    Suppose that \(G\) is vigorous, \(N\) is a non-trivial normal subgroup of \(G\), \(A\in K_{\mathfrak{C}}\) and \(\delta\in G\) is supported on \(A\).
    There is some \(\gamma\in N\) such that \(\gamma|_A=\delta|_A\).
\end{lem} 
\begin{proof}

    % We first show that \(N\) contains elements of small support. Let \(g\in N\backslash\{1_N\}\) and let \(F\in \Cc\) be such that \((F)g\cap F= \varnothing\) and \((F)g \cup F \neq \mathfrak{C}\). Let \(h\in G\) be any non-trivial element supported on \(F\) (which exists as \(G\) is vigorous). Then
    % \[h^{-1}g^{-1}hg\in N\]
    % is supported on \(F\cup Fg\). In particular \(N\) contains elements of small support.

    Let \(k\in N\backslash \{1_N\}\). Let \(B\in K_{\mathfrak{C}}\) be such that \((B)k\cap B = \varnothing\).

    Let \(l\in G\) be such that \((A)l\subseteq H\), so \(A\subseteq (H)l^{-1}\). It follows that \(k^{l^{-1}}\in N\), and \((A)k^{l^{-1}}\cap A = \varnothing\). Finally, we have 
    \[\gamma:=[k^{l^{-1}}, \delta]\in N\]
     agrees with \(\delta\) on \(A\) as required.
\end{proof}
\begin{lem}\label{lem:normalvig}
    If \(G\) is vigorous and \(N\) is a non-trivial normal subgroup of \(G\), then \(N\) is vigorous.
\end{lem} 
\begin{proof}
    Suppose that $A,B,C$ are clopen subsets of $\C$ with $B$ and $C$ proper (non-empty) subsets of $A$. We need only show that there exists $\gamma$ in $N$ with $\suppp{\gamma} \Se A$ and $B\gamma \Se C$.
    Let \(D, E\in \Cc\) be such that \(A=B\sqcup D\sqcup E\). Without loss of generality assume that \(C\cap (B\sqcup D) \neq \varnothing\). Define \(A':=B\sqcup D\) and \(C':=C\cap A'\).

    As \(G\) is vigorous, there is some \(\delta\in G\) supported on \(A'\) with \((B)\delta \subseteq C'\) (if \(C'=A'\) then the identity function does this). 

  By Lemma~\ref{lem:bignormal}, there is some \(\gamma \in N\) such that \(\gamma|_{\mathfrak{C}\backslash E}=\delta|_{\mathfrak{C}\backslash E}\). So \(B\gamma=B\delta\subseteq C'\subseteq C\), and \(\gamma\) is supported on \(A\) as required.
    
\end{proof}
The following lemma is proven following the outline of the proof of the first part of Theorem \ref{thm:aes}.

Recall that a subgroup $G$ of $\autc$ is approximately full if and only if whenever $\gamma_1, \ldots, \gamma_n\in G$ and $D_1, \ldots, D_n$ are clopen sets partitioning $\C$ such that $D_1 \gamma_1, \ldots, D_n \gamma_n$ also partitions $\C$ and $j\in \{1, \ldots, n\}$ then there exists $\chi$ in $G$ such that $\chi$ extends
$\gamma_i|_{D_i}$ for each $i \in \{1, \ldots, n\} \Sm \{j\}$. 
\begin{lem}\label{lem:normBySmallSupport}
Let \(G\) be vigorous and approximately full. If \(N\) is a non-trivial normal subgroup of \(G\), then \(N\) is vigorous and approximately full. Moreover every element of \(N\) can be expressed as a product of two elements of \(N\) with small support.
\end{lem}
\begin{proof}
% Let $\gamma_1, \ldots, \gamma_n\in N$ and $D_1, \ldots, D_n$ clopen sets partitioning $\C$ such that $D_1\gamma_1, \ldots, D_n\gamma_n$ also partition $\C$ and $j\in \{1, \ldots, n\}$.

% As \(N\subseteq G\) then there exists $\chi$ in $G$ such that $\chi$ extends
% $\gamma_i|_{D_i}$ for each $i\neq j $.

By Lemma~\ref{lem:normalvig}, \(N\) is vigorous. By Theorem \ref{thm:aes}, \(G\) generated by its elements of small support, and we need only show that every element of \(N\) can be expressed as a product of two elements of \(N\) with small support.
%Suppose \(G\) is full and vigorous and \(N\lhd G\) is normal in \(G\).

Let $\gamma$ be a non-identity element of $N$. We will find \(\alpha, \beta\in N\) with small support such that $\alpha\beta = \gamma$.

Let $A\in \Cc$ be such that $A\gamma^{-1} \cap A = \varnothing$ and $(A\gamma^{-1} \sqcup A) \neq \C$. Let $(\gamma_1, \gamma_2, \gamma_3) = (\gamma, \gamma^{-1}, 1_G)$ where $1_G$ represents the identity of $G$. Set $D_1 := A\gamma^{-1}$ and $D_{2} := A$ and $D_{3} := \C \Sm (A\gamma^{-1} \sqcup A)$. Since $G$ is approximately full we may find $\delta$ in $G$ such that $\delta$ agrees with $\gamma$ on $D_{1}=A\gamma^{-1}$ and agrees with $1_G$ on $D_{3}=\C \Sm (A\gamma^{-1} \sqcup A)$. 

Note that \(\delta\in \pstab_G(D_3)\). Let \(B\) be a proper clopen subset of \(D_3\). By Lemma~\ref{lem:bignormal}, there is \(\beta\in N\) such that \(\beta|_{B\cup D_1 \cup D_2}=\delta|_{B\cup D_1 \cup D_2}\).
Set $\alpha:= \gamma\beta^{-1}$.% Note that $\alpha$ pointwise stabilises $A$.

Note firstly that $\gamma$ was an arbitrary (non-identity) element of $N$.  Also, $\alpha$ pointwise stabilises the clopen set $A\gamma^{-1}$, while $\beta$ pointwise stabilises the clopen set $B $.  So \(\gamma\) is a product of elements of small support in \(N\) as required.

\end{proof}
Below, working towards Lemma~\ref{lem:MatuiSimplicity}, we essentially follow Matui's proof of his Theorem 4.16 of \cite{Matui15}, but in our context.  The core result is that the commutator subgroup of an approximately full vigorous group is simple, but we write it slightly differently to capture a chain of natural constructions.

\begin{lem}\label{lem:MatuiSimplicity_2}
Suppose that G is vigorous and approximately full. If \(N \unlhd  D(G)\), then \(N \unlhd  G\).
\end{lem}
\begin{proof}
This proof is similar to that of Theorem 4.15 from \cite{Matui15} but is included for completeness and due to the need to translate the proof to our context.

Let \(\alpha\in G\) and \(\tau\in N\). We show that \(\tau^{\alpha} \in N\).
We may assume that \(\alpha\) and \(\tau\) have small support as by Lemma~\ref{lem:normBySmallSupport}, both \(G\) and \(N\) are generated by their elements of small support. 
    As \(G\) is vigorous, there exists \(\sigma \in G\) such that \((\supt{\alpha})\sigma \cap \supt{\tau} = \varnothing\). 
    Then 
    \[\tau^\alpha=(\tau^{(\alpha^{-1})^{\sigma}})^{\alpha} = \tau^{[\sigma, \alpha]}  \in N.\]
\end{proof}

\begin{lem}\label{lem:MatuiSimplicity}
The commutator subgroup $D(G)$ of a vigorous, approximately full group \(G\) is vigorous, approximately full, and simple.
\end{lem}
\begin{proof}
The non-simplicity parts are handled by Lemma~\ref{lem:normBySmallSupport}
The simplicity proof is taken directly from \cite{Matui15} Theorem 4.16 but is included for completeness and due to the need to slightly translate the proof to out context.

Let \(N\) be a non-trivial normal subgroup of \(D(G)\). Let \(\tau \in N \backslash \{1\}\).
There exists a non-empty clopen set \(A \subseteq \mathfrak{C}\) such that \(A \cap  (A)\tau = \varnothing\).
We would like to show that \([\alpha, \beta]\) is in \(N\) for any \(\alpha, \beta \in G\).

First, we assume
\(\alpha,\beta\) have small support. 
As \(G\) is vigorous, we can find \(\gamma \in G\) such that
\((\supt{\alpha})\gamma \cap \supt{\beta} = \varnothing\) and \(\supt{\alpha}\cup \supt{\gamma}\) is not dense. 
There exists \(\sigma \in G\) such that
\((\supt{\alpha} \cup \supt{\gamma})\sigma \subseteq A\).
By Lemma~\ref{lem:MatuiSimplicity_2}, \(\tilde{\tau} = \tau^\sigma \in N\).

By the construction of \(\tilde{\tau}\), 
\((\supt{\alpha} \cup \supt{\gamma}) \cap (\supt{\alpha} \cup \supt{\gamma})\tilde{\tau} = \varnothing\). Hence \(\tilde{\gamma} = (\gamma^{-1})^{\tilde{\tau}}\gamma=[\tilde{\tau},\gamma ]\) agrees with \(\gamma\) on \((\supt{\alpha} \cup \supt{\gamma})\).
In particular it satisfies \((\supt{\alpha})\tilde{\gamma} \cap
\supt{\beta} = \varnothing\). It follows that \( \alpha^{ \tilde{\gamma}}\)
commutes with \(\beta\) and \(\beta^{-1}\). Moreover, by Lemma~\ref{lem:MatuiSimplicity_2}, \(\tilde{\gamma}\) is in
\(N\), too. 
Therefore
\[[\alpha, \beta] = \alpha^{-1}\beta^{-1} \alpha\beta = \alpha^{-1}\alpha^{\tilde{\gamma}}\beta^{-1} (\alpha^{\tilde{\gamma}})^{-1}\alpha\beta =[\alpha, \tilde{\gamma}]\beta^{-1}[\alpha, \tilde{\gamma}]^{-1}\beta\in N.\]

Next, we show \([\alpha, \beta] \in N\) when only \(\beta\) is assumed to have small support. 
By Lemma~\ref{lem:normBySmallSupport}, we can find \(\alpha_1, \alpha_2, \beta_1, \beta_2 \in G\) with small support
such that \(\alpha = \alpha_1 \alpha_2\) and \(\beta=\beta_1\beta_2\). By the proof above, \([\alpha_1, \beta]\)
and \([\alpha_2, \beta]\) are in \(N\). Note that

\[[\alpha, \beta] = [\alpha_1\alpha_2, \beta]=\alpha_2^{-1}\alpha_1^{-1}\beta^{-1}\alpha_1(\beta\alpha_2\alpha_2^{-1}\beta^{-1})\alpha_2\beta= [\alpha_1, \beta]^{\alpha_2} [\alpha_2, \beta],\]
and similarly
\[[\alpha_1, \beta] =[\alpha_1, \beta_2][\alpha_1, \beta_1]^{\beta_2}, \quad \quad [\alpha_2, \beta] =[\alpha_2, \beta_2][\alpha_2, \beta_1]^{\beta_2}.\]
So from Lemma~\ref{lem:MatuiSimplicity_2}, we have \([\alpha, \beta]\in N\) as required.
\end{proof}

\fes%%%%%%%%%%%%%%%%%%%%%%%%%%%%%%%%%%%%%%%%%%%%%%%%%%%%

\begin{proof}
\hspace{0pt}

(\ref{fessim}) $\Rightarrow$ (\ref{fesfls})

Let \(w\) be a non-trivial freely reduced word. By Lemma \ref{FiN} \(w(G)_\circ\) is normal in \(G\). Theorem \ref{thm:vigfree} gives that \(w(G)_\circ\) contains a non-identity element of \(G\), so \(w(G)_\circ\) is a non-trivial normal subgroup of \(G\) which must therefore be \(G\) by simplicity.

(\ref{fesfls}) $\Rightarrow$ (\ref{feslfl})

By inspection, the property of being $\fls{}$ is stronger than Condition (\ref{feslfl}).

(\ref{feslfl}) $\Rightarrow$ (\ref{fespss})

The group $G$ is perfect because it is generated by a subset of the set of commutators. Noting that the complement of a proper clopen set is a proper clopen set, we see that $\Ss{[\alpha,\beta]}{\alpha,\beta\in G,\exists K\in\Cc\,\text{such that}\, \supt{[\alpha,\beta]}\subseteq K}$ is equivalent to the set $\Ss{[\alpha,\beta] \in \pstab_G(A)}{\alpha,\beta \in G, A \in \Cc}$.  Therefore this last set generates $G$ and is a subset of the set of elements of small support.

(\ref{fespss}) $\Leftrightarrow$ (\ref{fespaf})

This follows immediately from Corollary \ref{cor:aes}.

(\ref{fespaf}) $\Rightarrow$ (\ref{fessim})

 This follows from Lemma~\ref{lem:MatuiSimplicity}.
% Let $\delta$ be a non-identity element of $G$. We will show that the normal closure of $\delta$ is equal to $G$. 

% As $G$ is generated by its elements of small support it is sufficient to show that the set $\Ss{\gamma \in \pstab_G(C)}{C\in \Cc}$ is a subset of the normal closure of $\delta$.

% Let $I$ be in $\Cc$.  Observe that $\C\backslash I \in \Cc$ as well.  Specifically, as $I$ is arbitrary it is therefore sufficient to show that $\pstab_G(\C \Sm I)$ is a subset of the normal closure of $\delta$.

% By Lemma \ref{lem:minime2} we may find $J \in \Cc$ such that $J \supseteq I$ and $\pstab_G(\C \Sm J)$ is isomorphic to $G$. It follows then that the group $\pstab_G(\C \Sm J)$ is perfect.

% Let $\mu$ and $\nu$ be in $\pstab_G(\C \Sm J)$. Since $\pstab_G(\C \Sm J)$ is perfect it is sufficient to show the commutator $[\mu,\nu]$ is in the normal closure of $\delta$.

% Let $P$ in $\Cc$ be such that $P\delta \cap P$ is empty (note that $P$ exists by Lemma \ref{ntms}) and further let $\lambda \in G$ be such that $J\lambda \subseteq P$. Since $\suppp{[\delta,\mu^\lambda]} \cap \suppp{\nu^\lambda} \Se P$ and both $[\delta,\mu^\lambda]$ and $\nu^\lambda$ setwise stabilise $P$ and $[\delta,\mu^\lambda]$ agrees with $\mu^\lambda$ on $P$ it follows that $[[\delta,\mu^\lambda],\nu^\lambda]^{(\lambda^{-1})} = [\mu^\lambda,\nu^\lambda]^{(\lambda^{-1})}=[\mu,\nu]$.

% However, $[[\delta,\mu^\lambda],\nu^\lambda]^{(\lambda^{-1})}$ is in the normal closure of $\delta$.  Thus, we have that $[\mu,\nu]$ is in the normal closure of $\delta$ as desired.

(\ref{fesmat}) $\Rightarrow$ (\ref{fessim}) 

This follows from Lemma~\ref{lem:MatuiSimplicity}.  

(\ref{fessim}) $\Rightarrow$ (\ref{fesmat}) 

This direction was established by Belk and Matucci in an independent project.  Upon hearing of this work, they graciously suggested its inclusion here.

% The argument of Matui in Theorem 4.16 of  \cite{Matui15} shows that the commutator subgroup of a vigorous full group of homeomorphisms of Cantor space is simple.  Note that Matui does not use our language of  `vigorous' in \cite{Matui15}, he instead assumes that a groupoid is purely infinite and minimal. However Matui's argument still goes through as by Proposition 4.11 of his paper the transformation groupoid of a vigorous group will always have these properties. In passing we note that Matui also uses essentially principal Et\'ale groupoids instead of group actions, and in the case of transformation groupoids his context allows only for countable groups with ``almost" free actions (although his arguments work in our context, regardless of these facts).

Let us now suppose that $G$ is simple and $H$ is the full group of $G$ in $\autc$.  As $G$ is vigorous, $G$ satisfies all of the conditions (\ref{fessim})--(\ref{fespaf}).  We now argue that $G$ is the commutator subgroup of its own full group within $\autc$.

Since $G$ is perfect and a subgroup of the full group of $G$ it follows that $G$ must be a subgroup of the commutator subgroup of the full group of $G$. Recalling as before that Matui in \cite{Matui15} shows that the commutator subgroup of a full vigorous (flexible) group of homeomorphisms of Cantor space is simple, it is sufficient to show that $G$ is normal in the commutator subgroup of the full group of $G$. We will now show that $G$ is normal in the full group of $G$.

Since $G$ is generated by its elements of small support we only need to show that the elements of small support in $G$ are closed under conjugation by elements of the full group of $G$.  

Let $\gamma$ now represent an element of small support in $G$, which is supported inside a particular clopen set $U\in \Cc$.  Let $\sigma$ be a (non-identity) element of the full group $H$.  Then, by definition, there is a positive integer $k$ and partitions $D\seteq \{D_1,D_2,\ldots, D_k\}$ and $R\seteq\{R_1,R_2,\ldots ,R_k\}$ of $\C$, together with a set of elements $\{\sigma_1,\sigma_2,\ldots,\sigma_k\}\subsetneq G$ so that for each index $i$ we have $\sigma$ agrees with $\sigma_i$ over $D_i$ and $D_i\sigma = R_i$.  Further, there is an index $j$ so that $D_j\not\subseteq \supt{\gamma}$.  We can now choose $E_j,F_j\in \Cc$ so that $E_j\cap F_j=\varnothing$, $E_j\cup F_j = D_j$, and $F_j\cap \supt{\gamma}=\varnothing$.

Now the sets $$D'\seteq\{D_1,D_2,\ldots, D_{j-1},E_j, F_j, D_{j+1}, D_{j+2},\ldots,D_k\}$$ and $$R'\seteq\{R_1,R_2,\ldots,R_{j-1}, E_j\sigma_j, F_j\sigma_j, R_{j+1}, R_{j+2},\ldots,R_k\}$$ are partitions of $\C$ into clopen sets, and further we have that $\sigma$ agrees with $\sigma_i$ over each $D_i$, for any index $i$, and particularly $\sigma$ agrees with $\sigma_i$ over each of the sets $E_j$ and $F_j$.  Thus we have a system as per the definition of approximately full groups, as each $\sigma_i\in G$.

Now, we note that the union of these restricted elements is $\sigma$, but we can find an element $\chi\in G$ which agrees with all of these restricted elements over all of the parts of the domain partition $D'$ except $F_j$.  It follows that (as $\gamma$ acts as the identity over $F_j$) $\gamma^\sigma = \gamma^{\chi}$, but this last element is an element of $G$.

As $\gamma$ was an arbitrary element of small support in $G$, and as $\sigma$ was an arbitrary (non-identity) element of $H$, we see that the set of elements of small support in $G$ is not only a generating set for $G$, but this set is also a union of conjugacy classes of $H$.  Consequently, $G$ is normal in $H$, as desired.
\end{proof}

We note that there exist full subgroups of $\autc$ which are not simple. For example the Higman--Thompson group $V_3$.

\section{Exploring simple vigorous groups}\label{sec:exploreSimpleVig}

Recall $\K$ is the family of simple \vig{} subgroups of $\autc$.  In the first subsection we show this is a rather large family.  

Further recall that $\Kf$ is the subfamily of $\K$ consisting of the finitely generated simple \vig{} subgroups of $\autc$.
From Lemma \ref{Vcsg} and the fact that the commutator subgroups of the Higman--Thompson groups $V_k$ are vigorous, finitely presented, and simple, we obtain for instance that the (now called) simple R\"over-Nekrashevych groups of \cite{Nekrashevych} all in $\Kf$.  It is also easy to verify that the Brin--Thompson groups $nV$ are vigorous, so as these are finitely presented \cite{Brin,BrinPres,HennigMatucci} they are in $\Kf$ as well.  In the second subsection we prove Theorem \ref{sub2} that all of these groups are two generated by torsion elements.
\subsection{Exploring the classes $\K$ and $\Kf$}\label{subsec:exploringK}

Having given some examples previously of groups in $\Kf$ (e.g., R. Thompson's group $V_2$ and R\"over's group $V(\Gamma$)),  we will now show that $\K$ and $\Kf$ are closed under various natural constructions to allow for more groups in $\K$ and $\Kf$ to be easily constructed.

{For $D$ a non-empty subset of $\C$ we will say that a subgroup $G\leq \autc$ is \emph{vigorous over $D$} if for any non-empty clopen subset $A$ of $D$ and non-empty proper clopen subsets $B$ and $C$ of $A$ there exists $\gamma \in G$ with support contained in $A$ such that $B\gamma \subseteq C$.  We will write $\Kk{D}$ for the family of simple subgroups of $\autc$ whose elements are with support contained in $D$ and where these groups are vigorous over $D$.  We will write $\Kkf{D}$ for the family of finitely generated groups in $\Kk{D}$. }

\begin{rmk} \label{rmk:KcC}
Let $C$ and $D$ be non-empty clopen subsets of $\C$ and let $\lambda:C \tooo D$ be a homeomorphism. The homeomorphism $\lambda$ induces an isomorphism from $\pstab_{\autc}(\C \Sm C)$ to $\pstab_{\autc}(\C \Sm D)$. This isomorphism induces a correspondence between groups in $\Kk{C}$ and groups in $\Kk{D}$ and a correspondence between groups in $\Kkf{C}$ and groups in $\Kkf{D}$. Note that $C$ and $D$ could be equal to $\C$. 
\end{rmk}

The above remark is both an easy observation and a useful tool for constructing groups trivially in $\K$ or $\Kf$ and, when combined with the propositions below, groups non-trivially in $\K$ or $\Kf$.

\begin{prop} \label{prop:KcU}
Let $U$ and $V$ in $\Cc$ be such that $U \cap V$ is non-empty. If $G$ is in $\Kk{U}$ and $H$ is in $\Kk{V}$ then $\An{G \cup H}$ is in $\Kk{U \cup V}$. If furthermore $G$ is in $\Kkf{U}$ and $H$ is in $\Kkf{V}$ then $\An{G \cup H}$ is in $\Kkf{U \cup V}$.
\end{prop}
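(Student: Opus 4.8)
The plan is to verify, in turn, the four requirements for $\An{G \cup H}$ to lie in $\Kk{U \cup V}$: that every element is wholly supported in $U \cup V$, that the group is vigorous over $U \cup V$, that it is simple, and (for the second assertion) that it is finitely generated. The first and last are immediate. Each element of $G$ is supported in $U$ and each element of $H$ in $V$, so every word in $G \cup H$ is supported in $U \cup V$; and if $G$ and $H$ are finitely generated then the union of finite generating sets generates $\An{G \cup H}$. Thus the two substantive points are vigour over $U\cup V$ and simplicity.

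The heart of the argument is vigour. Fix a nonempty clopen $A \Se U \cup V$ and proper nonempty clopen $B, C \Sn A$; I must produce $\gamma \in \An{G\cup H}$ with $\supt{\gamma} \Se A$ and $B\gamma \Se C$. If $A \Se U$ (resp.\ $A \Se V$) then $B,C$ lie in $U$ (resp.\ $V$), and vigour of $G$ over $U$ (resp.\ of $H$ over $V$) supplies the required element, which already lies in $\An{G\cup H}$. So I may assume $A$ meets both $U \Sm V$ and $V \Sm U$. The essential new phenomenon is transferring a clopen set from one ``side'' of $A$ to the other, and this is where the hypothesis $U \cap V \neq \varnothing$ is used decisively. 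Concretely, to move a clopen $P \Se A\cap(U\Sm V)$ into a clopen target $Q \Se A\cap(V\Sm U)$, I fix a nonempty clopen $W \Se U\cap V$, use vigour of $G$ over $U$ to obtain $g\in G$ with $\supt g \Se P\cup W$ and $Pg\Se W$, and then use vigour of $H$ over $V$ to obtain $h\in H$ with $\supt h \Se (Pg)\cup Q$ and $(Pg)h\Se Q$. The conjugate $\gamma\seteq ghg^{-1}$ then satisfies $P\gamma\Se Q$ (since $g^{-1}$ fixes $Q$), while $\supt{\gamma}=(\supt h)g^{-1}\Se P\cup Q\Se A$, because $(Pg)g^{-1}=P$ and $Qg^{-1}=Q$. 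This confines the support to $A$ even though the intermediate motions pass through the overlap $W$, which lies outside $A$. To finish the general case I reduce to this trick together with same-side compressions: assuming by the symmetry of the hypotheses in $(G,U)$ and $(H,V)$ that $C$ meets $A\cap U$, I use the transfer construction (and its mirror image) to carry the $V$-side part of $B$ into $A\cap(U\Sm V)$ by an element of $\An{G\cup H}$ supported in $A$, arranging the various sets disjointly; then a single element of $G$, by vigour over $U$, compresses the resulting image into $C\cap U\Se C$, and composing gives the desired $\gamma$.

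For simplicity I exploit that $U\cup V$ is a nonempty clopen subset of $\C$, hence itself a Cantor space. By the correspondence of Remark \ref{rmk:KcC}, conjugating by a homeomorphism $U\cup V\tooo\C$ carries $\An{G\cup H}$ (which pointwise fixes $\C\Sm(U\cup V)$) to a subgroup $G^{*}\leq\autc$ isomorphic to it, and $G^{*}$ is vigorous on all of $\C$ precisely because $\An{G\cup H}$ is vigorous over $U\cup V$, as just shown. Now $G$ and $H$ are simple and vigorous over their domains, hence nonabelian by Theorem \ref{thm:vigLaw}, hence perfect; therefore $\An{G\cup H}$, being generated by perfect subgroups, is perfect, and so is $G^{*}$. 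Moreover, applying Theorem \ref{thm:fes} through the same correspondence shows that each of $G$ and $H$ is generated by its elements of small support (proper clopen support inside $U$, resp.\ $V$); any proper clopen subset of $U$ or of $V$ is a proper clopen subset of $U\cup V$, so $G^{*}$ is generated by its elements of small support. By the implication (\ref{fespss})$\Rightarrow$(\ref{fessim}) of Theorem \ref{thm:fes} for vigorous groups, $G^{*}$ is simple, whence $\An{G\cup H}$ is simple. This proves $\An{G\cup H}\in\Kk{U\cup V}$, and combined with finite generation it gives $\An{G\cup H}\in\Kkf{U\cup V}$.

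The principal obstacle I expect is the cross-side transfer inside the vigour argument: moving a clopen set, by an element supported in $A$, from the $U$-side of $A$ to the $V$-side when $A$ avoids the overlap $U\cap V$. Naively routing the motion through the overlap produces an element whose support meets $U\cap V\Se\C\Sm A$, so the key point is the conjugation $\gamma=ghg^{-1}$, which performs the transfer while pulling the support back into $A$. Everything else is either immediate or a matter of disjointness bookkeeping and of transporting the already-proved dynamical properties through Remark \ref{rmk:KcC} to invoke Theorem \ref{thm:fes}.
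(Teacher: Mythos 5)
Your overall decomposition (support, finite generation, vigour, simplicity) matches the paper's, your cross-side conjugation transfer $\gamma = ghg^{-1}$ is sound, and your simplicity argument is correct (you route through perfectness plus generation by elements of small support and the implication (\ref{fespss})$\Rightarrow$(\ref{fessim}) of Theorem \ref{thm:fes}, where the paper instead passes through flawlessness; both are legitimate). The genuine gap is in the general-case reduction of the vigour argument. You place the symmetry assumption on $C$ (``$C$ meets $A\cap U$'') and then prescribe two moves: carry the $V$-side part of $B$ into some clopen $Q' \Se A\cap(U\Sm V)$, then compress into $C\cap U$ by one element of $G$. Consider the configuration $A = A_1 \sqcup A_3$ with $\varnothing \neq A_1 \Se U\Sm V$ and $\varnothing \neq A_3 \Se V\Sm U$, with $B = A_1 \sqcup B_3$ for some $\varnothing \neq B_3 \Sn A_3$, and $C \Sn A_1$. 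Here $C$ meets $U$ only, so your WLOG is forced to this orientation; moreover $A\cap U = A_1 \Se B$ and the spare room $A\Sm B = A_3 \Sm B_3$ lies entirely in $V\Sm U$. Now every admissible transfer target $Q' \Se A\cap(U\Sm V)=A_1$ lies inside $B$, so your transfer element, being supported in $(B\Sm U)\cup Q' \Se B$, fixes $B$ setwise and makes no progress; and any element of $G$ with support in $A$ is supported in $A\cap U = A_1 \Se B$, so it also fixes $B$ setwise. Hence every composition of the moves you describe fixes $B$ setwise and can never map $B$ into $C \Sn B$: in this configuration the plan fails outright, not merely up to bookkeeping.

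The paper avoids exactly this by anchoring the symmetry at the spare room rather than at $C$: choose a non-empty clopen $D \Se A\Sm B$ with $C\Sm D \neq \varnothing$ and $B\cup D \neq A$, assume without loss of generality that $D\cap U \neq \varnothing$, and use vigour of $G$ over $U$ to find $\lambda \in G$ with $(U\Sm V)\lambda \Se D$. Then $V\lambda \Spe (U\cup V)\Sm D \Spe A\Sm D$, and by Remark \ref{rmk:KcC} the conjugate $H^\lambda \Se \An{G\cup H}$ lies in $\Kk{V\lambda}$; a single application of its vigour, with ambient set $A\Sm D$, produces $\gamma$ supported in $A\Sm D \Se A$ with $B\gamma \Se C\Sm D \Se C$. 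This is the same conjugation idea as your transfer, but applied once to the whole group $H$ and anchored at the room $A\Sm B$, which exists in every configuration, whereas room on the side you selected need not exist. Your argument can be repaired in your own framework---first transfer $B\cap(U\Sm V)$ into the $V$-side room (an element supported in $(A\cap(U\Sm V))\cup Q$ with $Q \Se A\Sm B$), vacating $A\cap U$, and only then transfer into $C$---but as written the reduction is incorrect.
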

\begin{proof}%%%%%%%%%%%%
Let $A$ be a non-empty clopen subset of $U \cup V$ and let $B$ and $C$ be non-empty proper subsets of $A$.

Let $D \subseteq A \Sm B$ not be a superset of $C$. Since $D \subsetneq A \subseteq U \cup V$ at least one of $D \cap U$ and $D \cap V$ are non-empty. By symmetry we may assume that $D \cap U$ is non-empty. Let $\lambda \in G$ be such that $(U \Sm V)\lambda \subseteq D$. Note that $V\lambda \supseteq (U \cup V) \Sm D$  

By Remark \ref{rmk:KcC} the group $H^\lambda$ is in $\Kk{V\lambda}$. Since $B$ and $C \Sm D$ are non-empty proper subsets of $A \Sm D \subseteq V\delta \cap A$ there exist $\gamma \in \pstab_{H^\lambda}(\C \Sm A)$ such that $B\gamma \subseteq C \Sm D$ as desired.

For the remainder of the argument, note that Theorem \ref{thm:fes} holds for groups wholly supported on an element $D\in \Cc$ (by replacing `vigorous' with `vigorous over $D$'), as such sets $D$ are themselves homeomorphic to Cantor space.  

Now we will show that $\An{G \cup H}$ is simple. By Theorem \ref{thm:fes} the groups $G$ and $H$ are \fls{}. Also by Theorem \ref{thm:fes} it is sufficient to show that $\An{G \cup H}$ is \fls{} as we have just proved that the group $\langle G\cup H\rangle$ is vigorous over $U\cup V$. Since for any non-trivial freely reduced word \(w\) we have $\langle G\cup H\rangle=\langle w[G]_\circ \cup w[H]_\circ\rangle$ we see that $\langle G\cup H\rangle$ is also \fls{} and we are done.

If $G$ and $H$ are finitely generated then it is immediate that $\An{G \cup H}$ is also finitely generated.
\end{proof}

The next lemma is used both in the next Proposition and in Theorem \ref{thm:sub2}.
Observe that it follows immediately from Lemma \ref{lem:ACD} and Remark \ref{rmk:KcC}.
\begin{lem} \label{lem:SoP}
Let $U$ and $V$ in $\Cc$ be such that $U \cap V$ is non-empty. If $G$ pointwise stabilises $\C\backslash (U \cup V)$, and is generated by its elements which are supported on a proper clopen subset of $U \cup V$, and $G$ is vigorous over $U\cup V$ then 
\[\An{\pstab_G(U \Sm V) \cup \pstab_G(V \Sm U)} = G.
\]
\end{lem}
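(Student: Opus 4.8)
The plan is to reduce everything to the Cantor space $W \seteq U \cup V$ and then invoke the generation results already established for vigorous groups. Since every element of $G$ fixes $\C \Sm (U \cup V)$ pointwise, each element of $G$ is determined by its restriction to $W$, so $G$ may be regarded as a subgroup of $\homeo(W)$. As $W$ is a non-empty clopen subset of $\C$ it is itself a Cantor space, so by Remark \ref{rmk:KcC} (applied to a homeomorphism identifying $W$ with $\C$) the hypotheses translate cleanly: being vigorous over $U \cup V$ becomes being vigorous as a subgroup of $\homeo(W)$, and being generated by elements supported on proper clopen subsets of $U \cup V$ becomes being generated by its elements of small support. After this reduction I am in exactly the setting that underlies Lemma \ref{lem:ACD} (and its engine, Lemma \ref{lem:SGDisjointGen}).

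Next I would dispose of the degenerate cases. If $U \Se V$ then $U \Sm V = \varnothing$, so $\pstab_G(U \Sm V) = \pstab_G(\varnothing) = G$; hence $\An{\pstab_G(U \Sm V) \cup \pstab_G(V \Sm U)}$ already contains $G$ and, being a subgroup of $G$, equals $G$. The case $V \Se U$ is symmetric. So I may assume both $U \Sm V$ and $V \Sm U$ are non-empty; together with the standing hypothesis that $U \cap V$ is non-empty, the three sets $U \Sm V$, $V \Sm U$ and $U \cap V$ then form a partition of $W$ into three non-empty clopen sets, each proper in $W$ (since the other two are non-empty). Finally, in this main case I apply Lemma \ref{lem:SGDisjointGen} inside $W$ with the partition given by $U \cap V$, $U \Sm V$, and $V \Sm U$: since $G$ is vigorous over $W$ and generated by its elements of small support, the lemma yields $G = \An{\pstab_G(U \Sm V) \cup \pstab_G(V \Sm U)}$, which is the desired conclusion. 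Equivalently, this is a direct instance of Lemma \ref{lem:ACD}, whose proof is exactly this application of Lemma \ref{lem:SGDisjointGen} to a sub-Cantor-space.

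I do not expect a serious obstacle here, as the statement is essentially a corollary. The only points requiring care are the verification that the defining hypotheses survive the identification of $U \cup V$ with a Cantor space under Remark \ref{rmk:KcC}, and the bookkeeping for the degenerate inclusions $U \Se V$ and $V \Se U$, where one of the two pointwise stabilisers is the whole of $G$. The essential content is carried entirely by Lemma \ref{lem:SGDisjointGen} (via Lemma \ref{lem:ssgp}); the role of this argument is simply to set up the correct three-part partition of $U \cup V$ and then cite that result.
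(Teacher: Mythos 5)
Your proof is correct and takes essentially the same route as the paper: the paper's entire justification is that the lemma ``follows immediately from Lemma \ref{lem:ACD} and Remark \ref{rmk:KcC}'', which unpacks to exactly your argument --- identify $U \cup V$ with a Cantor space via Remark \ref{rmk:KcC} and apply Lemma \ref{lem:SGDisjointGen} to the partition $\{U \cap V,\, U \Sm V,\, V \Sm U\}$. Your explicit handling of the degenerate cases $U \Se V$ and $V \Se U$ (where one part of the partition is empty, so Lemma \ref{lem:SGDisjointGen} does not literally apply but the conclusion is trivial since $\pstab_G(\varnothing) = G$) is a detail the paper's one-line justification passes over silently, and you resolve it correctly.
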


\begin{prop} \label{prop:psk}
Let $K$ be in $\Cc$. If $G$ is in $\K$ then the group $\pstab_G(\C \Sm K)$ is in $\Kk{K}$. If $G$ is in $\Kf$ then the group $\pstab_G(\C \Sm K)$ is in $\Kkf{K}$.
\end{prop}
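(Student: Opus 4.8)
The plan is to verify the three defining properties of membership in $\Kk{K}$ for $H\seteq\pstab_G(\C\Sm K)$, and then to treat finite generation separately. That every element of $H$ is wholly supported in $K$ is immediate, since such elements fix $\C\Sm K$ pointwise. For vigour over $K$: given a non-empty clopen $A\Se K$ and non-empty proper clopen $B,C\Se A$, note that $A\in\Cc$ (it is proper in $\C$ since $K\Sn\C$), so vigour of $G$ yields $\gamma\in G$ with $\supt{\gamma}\Se A$ and $B\gamma\Se C$; as $\supt{\gamma}\Se A\Se K$ this $\gamma$ lies in $H$ and is wholly supported in $A$. This is precisely the observation already used in the proof of Lemma \ref{lem:ACD}.

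For simplicity I would invoke the version of Theorem \ref{thm:fes} valid for groups wholly supported on a set in $\Cc$, as noted in the proof of Proposition \ref{prop:KcU}; concretely I would establish condition \ref{fespss}, namely that $H$ is perfect and generated by its elements of small support. The latter is Lemma \ref{lem:ssgp} with $U=K$: the set $X_{G,K,x}$ of elements of $G$ supported on a proper clopen subset of $K$ generates $\pstab_G(\C\Sm K)=H$, and these are exactly the elements of small support of $H$ (viewing $K$ as the ambient Cantor space). For perfection, let $\gamma$ be such a generator, non-trivial and supported on a non-empty proper clopen $V\Sn K$. Applying Lemma \ref{lem:minime2} with $I=V$ produces $J\in\Cc$ with $V\Sn J\Sn K$ and an isomorphism $G\cong\pstab_G(\C\Sm J)$; since $G$ is perfect (Theorem \ref{thm:fes}), so is $\pstab_G(\C\Sm J)$, and $\supt{\gamma}\Se V\Se J$ gives $\gamma\in\pstab_G(\C\Sm J)\Se H$. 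Hence $\gamma$ is a product of commutators of elements of $H$, so $H$ is perfect. Theorem \ref{thm:fes} then gives that $H$ is simple, and so $H\in\Kk{K}$.

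For the finite-generation clause, assume $G\in\Kf$. The plan is to exhibit $J_1,J_2\in\Cc$ with $J_1\cup J_2=K$ and $J_1\cap J_2\neq\varnothing$ for which each $\pstab_G(\C\Sm J_i)$ is isomorphic to $G$ (hence finitely generated). First I apply Lemma \ref{lem:minime2} with any proper non-empty clopen $I_1\Sn K$ to obtain such a $J_1\Sn K$. Setting $W\seteq K\Sm J_1$ (non-empty, and a proper subset of $K$ since $J_1$ is non-empty), I apply Lemma \ref{lem:minime2} again with $I_2=W$ to obtain $J_2$ with $W\Sn J_2\Sn K$; then $J_1\cup J_2=K$, and $J_1\cap J_2\neq\varnothing$ because $J_1\cap J_2=\varnothing$ would force $J_2\Se W$, contradicting $W\Sn J_2$. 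Finally, Lemma \ref{lem:SoP} applied to $H$ with $U=J_1$, $V=J_2$ (its hypotheses hold, since $H$ fixes $\C\Sm(U\cup V)=\C\Sm K$ pointwise, is generated by its elements of small support, and is vigorous over $K$) gives $H=\An{\pstab_H(J_1\Sm J_2)\cup\pstab_H(J_2\Sm J_1)}$. Since $\pstab_H(J_1\Sm J_2)=\pstab_G(\C\Sm J_2)$ and $\pstab_H(J_2\Sm J_1)=\pstab_G(\C\Sm J_1)$ (because fixing $\C\Sm J_i$ pointwise entails fixing the smaller set $\C\Sm K$), both of which are finitely generated, it follows that $H$ is finitely generated and lies in $\Kkf{K}$.

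The main obstacle I anticipate is this finite-generation clause: the simplicity and vigour parts follow quickly from the established machinery, but making $H$ finitely generated requires the combinatorial arrangement of two applications of Lemma \ref{lem:minime2} so that the output sets $J_1,J_2$ cover $K$ with non-empty intersection while each cuts out a point-stabiliser isomorphic to $G$. The delicate point is verifying $J_1\cup J_2=K$ and $J_1\cap J_2\neq\varnothing$ from the limited control Lemma \ref{lem:minime2} gives over its output set $J_i$ (only that $I_i\Sn J_i\Sn K$), together with confirming the point-stabiliser identities $\pstab_H(J_1\Sm J_2)=\pstab_G(\C\Sm J_2)$ so that Lemma \ref{lem:SoP} indeed delivers a generating set consisting of two finitely generated subgroups.
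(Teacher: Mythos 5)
Your proof is correct, and its finite-generation half is essentially the paper's argument, but your route to simplicity is genuinely different. The paper runs a single construction for everything: it chooses $I_1,I_2\in\Cc$ with $I_1\cup I_2=K$ and $I_1\cap I_2\neq\varnothing$, applies Lemma \ref{lem:minime2} to each to obtain $J_1,J_2$ with each $\pstab_G(\C\Sm J_i)\cong G$, invokes Proposition \ref{prop:KcU} to conclude that $\An{\pstab_G(\C\Sm J_1)\cup\pstab_G(\C\Sm J_2)}$ lies in $\Kk{K}$ (and in $\Kkf{K}$ when $G$ is finitely generated, being generated by two copies of $G$), and then uses Lemma \ref{lem:ssgp} and Lemma \ref{lem:SoP}, with exactly the point-stabiliser identities you wrote down, to show that this join is all of $\pstab_G(\C\Sm K)$; simplicity of the stabiliser is thus inherited from Proposition \ref{prop:KcU}, which in turn rests on flawlessness. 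You instead verify membership in $\Kk{K}$ directly: vigour over $K$ from the definition, generation by elements of small support from Lemma \ref{lem:ssgp}, and perfection by placing each small-support generator inside a perfect copy $\pstab_G(\C\Sm J)\cong G$ produced by Lemma \ref{lem:minime2}, then citing the relativised Theorem \ref{thm:fes} (condition \ref{fespss} implies condition \ref{fessim}); the two-copies construction enters your proof only for the finite-generation clause. Your arrangement decouples the $\K$ statement from the $\Kf$ statement and avoids Proposition \ref{prop:KcU} altogether, which would be useful if one wanted this proposition independently of that one; the paper's arrangement extracts both conclusions from one construction and is correspondingly shorter. The remaining divergence is cosmetic: you build the cover of $K$ adaptively (choose $J_1$ first, then feed $W=K\Sm J_1$ back into Lemma \ref{lem:minime2}), while the paper fixes the cover $I_1,I_2$ in advance; both deliver $J_1\cup J_2=K$ and $J_1\cap J_2\neq\varnothing$, which is what Lemma \ref{lem:SoP} needs.
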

\begin{proof}
Let $I_1 \in \Cc$ and $I_2 \in \Cc$ be such that $I_1 \cap I_2$ is non-empty, $I_1\not \subseteq I_2$, $I_2\not\subseteq I_1$, and $I_1 \cup I_2 = K$.

By Lemma \ref{lem:minime2} there exists $J_1$ and $J_2$ in $\Cc$ such that $I_1 \subsetneq J_1 \subsetneq K$ and $I_2 \subsetneq J_2 \subsetneq K$ and both $\pstab_G(\C \Sm J_1)$ and $\pstab_G(\C \Sm J_2)$ are isomorphic to $G$. The groups $\pstab_G(\C \Sm J_1)$ and $\pstab_G(\C \Sm J_2)$ are therefore simple. Since $G$ is \vig{} the group $\pstab_G(\C \Sm J_1)$ is in $\Kk{J_1}$. Similarly the group $\pstab_G(\C \Sm J_2)$ is in $\Kk{J_2}$. By Proposition \ref{prop:KcU} the group $\An{\pstab_G(\C \Sm J_1) \cup \pstab_G(\C \Sm J_2)}$ is in $\Kk{K}$.

Similarly, if $G$ is finitely generated then as  $\An{\pstab_G(\C \Sm J_1) \cup \pstab_G(\C \Sm J_2)}$ is generated by two isomorphic copies of $G$ then the group $\An{\pstab_G(\C \Sm J_1) \cup \pstab_G(\C \Sm J_2)}$ is finitely generated and is in $\Kkf{K}$.  

It remains to show that $\An{\pstab_G(\C \Sm J_1) \cup \pstab_G(\C \Sm J_2)}= \pstab_G(\C\backslash K)$.

As $G$ is vigorous we see that $H=\pstab_G(\C\backslash K)$ is vigorous over $K$.  By Lemma \ref{lem:ssgp} we have that $H$ is generated by those of its elements which are supported on proper clopen subsets of $K$. We apply Lemma \ref{lem:SoP} using $J_1$ and $J_2$ as $U$ and $V$, respectively, and noting that
\[
\pstab_{G}(\C\backslash J_1)=\pstab_{H}(K\backslash J_1)=\pstab_{H}(J_2\backslash J_1)
\]
\centerline{and}
\[
\pstab_{G}(\C\backslash J_2)=\pstab_{H}(K\backslash J_2)=\pstab_{H}(J_1\backslash J_2)
\] hold to conclude that $H=\langle \pstab_{H}(J_1\backslash J_2)\cup \pstab_{H}(J_2\backslash J_1)\rangle$ as desired.
\end{proof}

\begin{prop} \label{prop:KcA}
Let $G$ be in $\K$. Let $D$ be in $\Cc$. Let $\gamma \in G$ be such that $D\gamma \cap D$ is empty and $D\gamma \cup D$ is a proper subset of $\C$. Let $\delta$ be in $\pstab_{\autc}(\C \Sm D)$. Let $H$ be the group $\An{G \cup \{[\delta,\gamma]\}}$. Then $H$ is also in $\K$. If further $G$ is in $\Kf$ then $H$ is also in $\Kf$.
\end{prop}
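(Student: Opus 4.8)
The plan is to place $H$ in $\K$ by verifying the hypotheses of Theorem \ref{thm:fes}, namely that $H$ is \vig{}, perfect, and generated by its elements of small support. Two of these are quick. Since $G\Se H$ and $G$ is \vig{}, Lemma \ref{Vcsg} gives that $H$ is \vig{}. For the generation, write $\eta\seteq[\delta,\gamma]=\delta^{-1}\delta^\gamma$ and note that, as $D\gamma\cap D=\varnothing$ and $\delta$ is supported in $D$, the two factors have disjoint supports, so $\eta$ acts as $\delta^{-1}$ on $D$, as $\delta^\gamma$ on $D\gamma$, and as the identity off $D\cup D\gamma$. As $D\cup D\gamma$ is a proper clopen set, $\eta$ has small support; since $G$ is generated by its elements of small support (Theorem \ref{thm:fes}), so is $H=\An{G\cup\{\eta\}}$.

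The real content is that $H$ is perfect. As $G$ is perfect we have $G\Se H'$, so it is enough to show $\eta\in H'$. The difficulty is that $\delta$ need not lie in $H$, so we cannot simply present $\eta$ as a commutator of elements of $H$. I would instead pass to the abelianisation $\phi\colon H\to H/H'$ and recall that $\phi$ is constant on conjugacy classes. The key observation is that for any $N$ we may try to produce an identity $\eta^{c_1}\eta^{c_2}\cdots\eta^{c_N}=1_H$ with all $c_i\in G$; applying $\phi$ yields $\phi(\eta)^N=1$. If I can do this for two coprime values of $N$, then $\phi(\eta)=1$ and hence $\eta\in H'$.

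Such identities are built using that $G$, being simple and \vig{}, is strongly approximately full (Theorem \ref{thm:fes} together with Corollary \ref{cor:vigappfulstrongappfull}). Picture $\eta$ as depositing $\delta^{-1}$ on $D$ and the $\gamma$-transported copy $\delta^\gamma$ on $D\gamma$; conjugating $\eta$ by $c\in G$ moves these deposits to $Dc$ and $D\gamma c$, transported by $c$. Arranging the conjugators so that the deposits sit around a cycle of clopen sets with matching transports, neighbouring copies of $\delta$ cancel and the whole product collapses to $1_H$. For $N=2$, strong approximate fullness produces an involution $g\in G$ interchanging $D$ and $D\gamma$ via $\gamma^{\pm1}$, and a short computation gives $\eta^{g}=\eta^{-1}$, i.e. $\eta\,\eta^{g}=1_H$. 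For $N=3$, I would first use vigorousness to choose $k\in G$ with $k|_D=\gamma|_D$ and with $S\seteq D\gamma k$ disjoint from $D\cup D\gamma$, and then use strong approximate fullness to choose $m\in G$ with $m|_D=(\gamma k)|_D$ and $m|_{D\gamma}=\gamma^{-1}|_{D\gamma}$; checking germs on $D$, on $D\gamma$ and on $S$ then shows $\eta\,\eta^{k}\,\eta^{m}=1_H$. Thus $\phi(\eta)^2=\phi(\eta)^3=1$, so $\phi(\eta)=1$ and $H$ is perfect.

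With $H$ \vig{}, perfect, and generated by its elements of small support, Theorem \ref{thm:fes} gives that $H$ is simple, so $H\in\K$. If in addition $G\in\Kf$, then $G$ is finitely generated and hence so is $H=\An{G\cup\{\eta\}}$, giving $H\in\Kf$. I expect the one genuine obstacle to be the construction of the $N=3$ cancelling cycle: it is exactly the step that uses the fine control of strong approximate fullness to cancel all of the (non-$H$) element $\delta$, and it is what rules out a potential index-two quotient of $H$ that kills $G$.
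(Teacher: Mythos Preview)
Your argument is correct, but it takes a noticeably longer path than the paper's. Both proofs agree on the easy parts: $H$ is \vig{} by Lemma \ref{Vcsg}, $\eta=[\delta,\gamma]$ is supported on the proper clopen set $D\cup D\gamma$, so $H$ is generated by elements of small support, and the finitely generated claim is trivial. The divergence is in showing $H$ is perfect.

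The paper simply exhibits $\eta$ as a single commutator $[\mu,\nu]$ with $\mu,\nu\in H$. Using approximate fullness of $H$ (already established), one picks disjoint $A,B\in\Cc$ inside $\C\Sm(D\cup D\gamma)$ and finds $\mu\in H$ supported on $D\cup A$ with $\mu|_D=\delta|_D$, and $\nu\in H$ supported on $D\cup D\gamma\cup B$ with $\nu|_D=\gamma|_D$ and $\nu|_{D\gamma}=\gamma^{-1}|_{D\gamma}$. A direct check on each of the regions $D$, $D\gamma$, $A$, $B$ shows $[\mu,\nu]=[\delta,\gamma]$. This is a one-step construction and avoids any abelianisation bookkeeping. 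Your $N=2$ element $g$ is essentially the paper's $\nu$; had you also built the analogue of $\mu$, you would have landed on the paper's proof directly.

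Your route instead kills $\eta$ in $H/H'$ by producing relations $\eta\,\eta^{g}=1$ and $\eta\,\eta^{k}\,\eta^{m}=1$ with conjugators in $G$, forcing $\phi(\eta)^2=\phi(\eta)^3=1$. This works, and the germ-cancellation picture you describe is accurate. Two small slips: (i) strong approximate fullness does not hand you an \emph{involution} $g$, only an element with $g|_D=\gamma|_D$ and $g|_{D\gamma}=\gamma^{-1}|_{D\gamma}$; fortunately your computation $\eta^{g}=\eta^{-1}$ only uses those two restrictions, not $g^2=1$. (ii) ``Vigorousness'' alone does not produce your $k$ with $k|_D=\gamma|_D$ and $D\gamma k$ disjoint from $D\cup D\gamma$; you need strong approximate fullness here too (which you have already invoked). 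Neither slip affects correctness. The trade-off is that your approach is more robust conceptually (it would adapt to settings where one can control images of $\eta$ under conjugation but not rebuild $\delta$ locally), while the paper's is shorter and more transparent for this particular statement.
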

\begin{proof}
The group $H$ is $\vig{}$ because it contains \(G\). If the group $G$ is finitely generated then the group $H$ is also finitely generated. It remains to show that $H$ is simple.

Let $T$ be equal to the union
\[\bigcup\limits_{K \in \Cc} \pstab_H(K).\]

Note that as \([\delta,\gamma]\) fixes \(\C \backslash (D\cup D\gamma )\) we have \([\delta,\gamma]\in T\).

Since $G$ is generated by $T \cap G$ and $[\delta,\gamma]$ is in $T$ it follows that $H$ is generated by $T$. Therefore by Theorem \ref{thm:fes} it is sufficient to show that $H$ is perfect.

By Theorem \ref{thm:fes} the group $G$ is perfect so it is sufficient to show that $[\delta,\gamma]$ is a commutator of elements of $H$ (we cannot use $\delta$ and $\gamma$ because $\delta$ may not be in $H$).

Let $A$ and $B$ in $\Cc$ be disjoint subsets of $\C \Sm (D\gamma \cup D)$. By Theorem \ref{thm:aes} the group $H$ is approximately full so we may find $\mu$ and $\nu$ in $H$ such that $\mu$ agrees with $\delta$ on $D$ and is wholly supported on $D \cup A$ (for this, we can actually take $\mu=[\gamma,\delta]$, which agrees with $\delta$ on $D$) and so that $\nu$ agrees with $\gamma$ on $D$ and is wholly supported on $D\gamma \cup D \cup B$. (For this latter construction, we can use the approximately full property taking $\nu$ so that $\nu$ agrees with $\gamma^{-1}$ on $D\gamma$ to construct our element.) Now $[\mu,\nu] = [\delta,\gamma]$. 
\end{proof}

\begin{cor}
All the groups in $\Kf$ are semigroup $2$-generated. This includes $V_2$ and any group that may be constructed from $V_2$ using the constructions described in Remark \ref{rmk:KcC}, Proposition \ref{prop:KcU}, Proposition \ref{prop:psk} and Proposition \ref{prop:KcA}.
\end{cor}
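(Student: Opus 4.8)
The plan is to read off the corollary from Theorem \ref{thm:sub2} together with a single elementary observation about torsion generators. Let $G\in\Kf$ be arbitrary, so that $G$ is finitely generated, simple and \vig{}. Theorem \ref{thm:sub2} (applied, say, with $n=2$) furnishes $\sigma,\zeta\in G$ with $\sigma$ of some finite order $m$ and $\zeta$ of order $2$ satisfying $\An{\sigma,\zeta}=G$. The point is that a torsion element never requires its inverse symbol: since $\sigma^{-1}=\sigma^{m-1}$ and $\zeta^{-1}=\zeta$, any product of $\sigma^{\pm1}$ and $\zeta^{\pm1}$ can be rewritten using only non-negative powers of $\sigma$ and $\zeta$. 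Hence the subsemigroup of $G$ generated by $\{\sigma,\zeta\}$ already coincides with the subgroup $\An{\sigma,\zeta}=G$, which is precisely the statement that $G$ is semigroup $2$-generated. Exactly as in the proof of Proposition \ref{prop:KcU}, Theorem \ref{thm:sub2} applies verbatim to any group in $\Kkf{D}$ for a non-empty clopen $D$, since such a $D$ is itself a Cantor space and the group is finitely generated, simple and \vig{} over $D$; so the same argument shows every group in $\bigcup_{D}\Kkf{D}$ is semigroup $2$-generated.

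For the list of examples I would first check that $V_2\in\Kf$: it is finitely generated, it is \vig{} (Section \ref{c2e}), and it is simple because $2$ is even. Reading $\Kf$ as $\Kkf{\C}$ (the case $D=\C$, where `\vig{} over $\C$' is just `\vig{}'), I would then note that each of the four cited constructions keeps us inside the family $\bigcup_{D}\Kkf{D}$: Remark \ref{rmk:KcC} carries a group in $\Kkf{C}$ to one in $\Kkf{D}$ along a homeomorphism $C\to D$, Proposition \ref{prop:KcU} assembles groups in $\Kkf{U}$ and $\Kkf{V}$ into one in $\Kkf{U\cup V}$, Proposition \ref{prop:psk} sends a group in $\Kf$ to a point-stabiliser group in $\Kkf{K}$, and Proposition \ref{prop:KcA} enlarges a group in $\Kf$ to another group in $\Kf$. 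Consequently any group assembled from $V_2$ by finitely many applications of these constructions again lies in some $\Kkf{D}$, and so is semigroup $2$-generated by the first paragraph.

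I do not expect a genuine obstacle, since the substantive work is already carried out in Theorem \ref{thm:sub2}; this corollary is essentially a repackaging. The only step meriting care is the torsion rewriting, and it is worth emphasising that this is exactly why Theorem \ref{thm:sub2} was stated to produce two generators of \emph{finite order} rather than two generators simpliciter: without control over the orders one would obtain only group $2$-generation, and the passage to semigroup generation would fail.
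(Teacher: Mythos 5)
Your proposal is correct and follows the paper's own route exactly: the paper also deduces this corollary immediately from Theorem \ref{thm:sub2}, leaving implicit the observation you spell out, namely that torsion generators $\sigma,\zeta$ satisfy $\sigma^{-1}=\sigma^{m-1}$, $\zeta^{-1}=\zeta^{n-1}$, so the generated subsemigroup is already the whole group. Your additional care about groups landing in $\Kkf{D}$ for $D\neq\C$ (handled via Remark \ref{rmk:KcC} or by viewing $D$ itself as a Cantor space) is consistent with how the paper treats this point in the proof of Proposition \ref{prop:KcU}.
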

\begin{proof}
This follows immediately from %Proposition \ref{KcC}, Proposition \ref{KcU}, \linebreak Proposition \ref{KcA} and 
Theorem \ref{thm:sub2}.
\end{proof}

Recall in Example \ref{VFP} we construct a non-simple overgroup in $\autc$ of the Higman--Thompson group $V_6$. Consequently neither $\K$ nor $\Kf$ are closed under taking overgroups in $\autc$.

\subsection{Two-generation of groups in $\Kf$} \label{c2p}
In this section we will prove
Theorem \ref{sub2}.

As has been mentioned above, the groups we are exploring bear some comparison with the full groups explored by Matui and others (see, e.g., \cite{Matui06,Matui12,Matui13,Matsumoto15,MatsumotoMatui17}). Matui's full groups are generally built from almost finite or purely infinite \'etale groupoids and act naturally on the groupoids' unit spaces (which is a Cantor space in many cases of interest). However, the groups we  consider need not be full, instead they are vigorous and  approximately full. Meanwhile, Matui's groups need not be vigorous, but often act with minimal actions.  In particular, neither of these classes contains the other.  In general, our class of groups corresponds most closely with the case where Matui is analyzing full groups over an \'etale groupoid of purely infinite type, and some similar statements can be made (see, e.g., Lemma 4.14 and Theorem 4.16 \cite{Matui13} which are close to our Lemmas \ref{lem:normBySmallSupport} and \ref{lem:MatuiSimplicity}, respectively).

Continuing this theme, in what follows we build an abelian group $(\Ooo{G},+)$ corresponding to Matui's $0$-th homology group of \cite{Matui12}, which in turn is based on Crainic and Moerdijk's homology theory for \'{e}tale groupoids (see \cite{CrainicMoerdijk}). Indeed, there have been many related constructions since \cite{Matui12}, for example, see \cite{Nawata12,Matui13,Matsumoto15,Matui15,Nekrashevych,Lawson16,MatsumotoMatui17,MatteBon17}.

In our case, we use our group $(\Ooo{G},+)$ to help us to understand how to assemble torsion elements which have some prescribed dynamics (see Lemmas \ref{lem:eag} and \ref{lem:part}) in approximately full groups.  

\subsubsection{Homology}\label{subsubsec:homology}

Our proof that a finitely generated vigorous simple group is actually two generated uses our constructed group $(\Ooo{G},+)$.  An older proof using only elementary arguments (but, this path is more compicated), can be found in \cite{HydeDiss}.

%\begin{exmp} \label{exm:Kncusg}
%Let $\alpha_{\{0,1\}}$ be as in Example \ref{Fex}. We will use $G$ for the group
%\[\An{\It{\{0,1\}}\alpha_{\{0,1\}} \cup V_2}.\]
%The group $G$ is a supergroup of $V_2$ which is in $\Kf$. We will show that $G$ is not simple and is therefore not in $\K$.

%We will use $S_0$ for the set $\Ss{x \in \{0,1\}^\omega}{\text{the set } 1x^{-1} \text{ is finite}}$ and use $S_1$ for the set $\Ss{x \in \{0,1\}^\omega}{\text{the set } 0x^{-1} \text{ is finite}}$. All elements of $V_2$ stabilise both $S_0$ and $S_1$ setwise. The non-identity element of $\It{\{0,1\}}\alpha_{\{0,1\}}$ maps $S_0$ bijectively to $S_1$ and maps $S_1$ bijectively to $S_0$.

%From the last paragraph it follows that all elements of $G$ either stabilise both $S_1$ and $S_2$ setwise or switch them. From this it follows that there is a homomorphism from $G$ to the two element group sending elements which stabilise both $S_1$ and $S_2$ setwise to the identity and sending elements which switch $S_1$ and $S_2$ to the non-identity element. Therefore $G$ is not simple.
%\end{exmp}

%Proposition \ref{prop:Age} below is used in the proof of Theorem \ref{thm:sub2}. If $C$ is in $\Cc$ and $G$ is a subgroup of $\autc$ then we use $CG$ to denote $\Ss{C\gamma}{\gamma \in G}$, that is the orbit of $C$ in $\Cc$ under $G$.

\begin{defn} \label{def:Age}
Let $G \leq \autc$ be approximately full and vigorous. For $B$ in $\Cc$ we will use $\Oo{B}{G}$ to denote the set $\Ss{B\gamma}{\gamma \in G}$. We will use $\Ooo{G}$ to denote the set $\Ss{\Oo{C}{G}}{C \in\Cc}$. For $U, V\in \Cc$ define $\Oo{U}{G}+\Oo{V}{G}\seteq \Oo{U\mu\sqcup V\nu}{G}$ where $\mu, \nu\in G$ are so that $U\mu\cap V\nu=\varnothing$ and $U\mu\sqcup V\nu\neq \C$.
  In the next proposition we prove this operation is well defined.
\end{defn}

\begin{prop} \label{prop:Age}
Let $G \leq \autc$ be approximately full and vigorous. The operation $+$ defined in \ref{def:Age} is well defined and when equipped with this operation $\Ooo{G}$ is a commutative group.
\end{prop}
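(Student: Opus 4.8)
The plan is to establish, in order, the existence of legal auxiliary elements $\mu,\nu$, the well-definedness of $+$ on orbits, and then the group axioms; I expect well-definedness to carry essentially all of the difficulty. Existence is quick: since $G$ is \vig{} it is in particular flexible, so given $U,V\in\Cc$ I partition $\C$ into three pairwise disjoint non-empty clopen pieces $P,Q,R$ and pick $\mu,\nu\in G$ with $U\mu\Se P$ and $V\nu\Se Q$. Then $U\mu\cap V\nu=\varnothing$ and $U\mu\cup V\nu\Se P\cup Q\Sne\C$, so a legal pair exists; and when $U,V$ are already disjoint with proper union the choice $\mu=\nu=1_G$ is legal, so once well-definedness is known $+$ is literally disjoint union on orbits, $\Oo{U}{G}+\Oo{V}{G}=\Oo{U\sqcup V}{G}$.

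Writing $X\sim Y$ for ``$Xg=Y$ for some $g\in G$'', I would reduce both independence statements (of $\mu,\nu$ and of the representatives $U,V$) to a single \emph{Cancellation Lemma}: if $A_1\sqcup B_1$ and $A_2\sqcup B_2$ are disjoint proper configurations with $A_1\sim A_2$ and $B_1\sim B_2$, then $A_1\sqcup B_1\sim A_2\sqcup B_2$. The reduction is a translation trick: choosing $a\in G$ with $A_1a=A_2$ and applying $a$ to the whole first configuration gives $A_1\sqcup B_1\sim A_2\sqcup(B_1a)$ with $B_1a\sim B_2$, so it suffices to prove the lemma when the first coordinates coincide, i.e. to show $F\sqcup X\sim F\sqcup Y$ whenever $X\sim Y$ and $F$ is disjoint from $X\cup Y$ with both unions proper.

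This last statement is the main obstacle. The natural strategy is to realise the equivalence $X\sim Y$ by an element that fixes $F$, using that $\pstab_G(F)$ acts vigorously on the Cantor space $\C\Sm F$ (as in the proof of Lemma \ref{lem:ACD}). The trap is that vigour only supplies mutual containments $Xh_1\Se Y$ and $Yh_2\Se X$ inside $\C\Sm F$, and a Cantor--Bernstein argument from containments alone is \emph{false} here: in $V_3$ a one-cone set embeds each way into a two-cone set without being orbit-equivalent to it. Hence the exact hypothesis $X\sim Y$ must be used essentially: taking $c\in G$ with $Xc=Y$, I would repair its action on $F$, spending the room guaranteed by properness and invoking the approximately full property, to produce $h\in G$ fixing $F$ setwise with $Xh=Y$, whence $(F\sqcup X)h=F\sqcup Y$. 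The genuinely delicate point is setting this extension up without circularity, since the image of any global map on the complement of $F\sqcup Y$ encodes exactly the conclusion; the thing to exploit is that $X\sim Y$ is precisely what makes the leftover domain and range pieces of the partition match, so that approximate fullness can fill them in.

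Granting the Cancellation Lemma (equivalently, well-definedness), the remaining axioms are routine and I would finish as follows. Commutativity is built into the definition. For associativity I place pairwise disjoint representatives $U',V',W'$ of three classes inside a four-part clopen partition of $\C$ and observe, via the disjoint-union description, that both bracketings equal $\Oo{U'\sqcup V'\sqcup W'}{G}$. For the neutral element I use \vig{}ness to compress: there are $U\in\Cc$ and $\gamma\in G$ with $U\gamma\Sne U$, so with $W:=U\gamma$ and $E:=U\Sm W$ one has $U=W\sqcup E$ and $W\sim U$, giving $\Oo{W}{G}+\Oo{E}{G}=\Oo{U}{G}=\Oo{W}{G}$; a short argument transporting this absorbing relation by conjugation and applying the Cancellation Lemma shows that every class admits such a ``defect'', that all defects represent one class, and that this common class is a two-sided identity $0=\Oo{E}{G}$. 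Finally, inverses come from complementation: given $\Oo{U}{G}$ I realise $U$ and a representative of its ``missing part'' as the two pieces of a proper clopen set and apply the compression argument once more to reach the class $0$, producing $U^{*}$ with $\Oo{U}{G}+\Oo{U^{*}}{G}=0$. Every one of these steps rests on the Cancellation Lemma, which is where the real work lies.
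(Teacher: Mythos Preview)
Your reduction of well-definedness to the Cancellation Lemma, and then to the special form ``$F\sqcup X\sim F\sqcup Y$ whenever $X\sim Y$ and $F$ is disjoint from $X\cup Y$ with both unions proper'', is clean and correct. The gap is in the step you flag as delicate. Your proposed resolution---``$X\sim Y$ is precisely what makes the leftover domain and range pieces of the partition match''---is not right: with the three-piece partition $\{F,X,Z_1\}$ (where $Z_1=\C\setminus(F\cup X)$), the approximately full hypothesis still requires an element $g\in G$ with $Z_1g=Z_2:=\C\setminus(F\cup Y)$ \emph{before} you may discard that piece, and $Z_1\sim Z_2$ is equivalent (by taking complements) to $F\cup X\sim F\cup Y$, which is exactly the conclusion. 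So the circularity you identified is real and is not dissolved by the hypothesis $X\sim Y$.

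The fix---and this is essentially what the paper does---is to first use vigour of $\pstab_G(F)$ on $\C\setminus F$ to replace $X$ by a translate $X'\subsetneq \C\setminus(F\cup Y)$ with $F\cup X'\cup Y\subsetneq\C$, and then run a \emph{swap} rather than a repair: take $c'\in G$ with $X'c'=Y$ and use the partition $\{X',\,Y,\,F\cup R\}$ (with $R$ the remainder) together with the elements $c',\,(c')^{-1},\,1_G$. Now the image partition is the \emph{same} partition $\{Y,\,X',\,F\cup R\}$, so approximate fullness applies with the $(c')^{-1}$-piece designated as $\delta$; the resulting $\chi$ fixes $F$ and sends $X'$ to $Y$. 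The paper packages this as: first reduce to the case where all four sets $U\mu_1,V\nu_1,U\mu_2,V\nu_2$ are pairwise disjoint with proper union (by pushing the two configurations into disjoint clopen regions via vigour), and then swap them pairwise using a five-piece partition. Your reduction to the $F\sqcup X\sim F\sqcup Y$ form is arguably tidier, but the same swap idea is what closes it.

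On the group axioms: your identity/inverse construction via ``defects'' $E=U\setminus U\gamma$ works once the Cancellation Lemma is in hand, but the paper's route is shorter. It simply shows that for any $A,C\in\Cc$ there is $B\in\Cc$ with $\Oo{A}{G}+\Oo{B}{G}=\Oo{C}{G}$ (take $\beta\in G$ with $A\beta\subsetneq C$ and set $B:=C\setminus A\beta$); in a commutative semigroup this solvability of $a+x=c$ for all $a,c$ already forces the semigroup to be a group, so no separate identity or inverse construction is needed.
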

\begin{proof}
 We first show that the operation $+$ is well defined.
 
 Let $U, V\in \Cc$, we will show $\Oo{U\mu_1 \sqcup V\nu_1}{G} = \Oo{U\mu_2 \sqcup V\nu_2}{G}$ for any $\mu_1,\mu_2,\nu_1,\nu_2\in G$ with $U\mu_i\cap V\nu_i=\varnothing$ and $U\mu_i\sqcup V\nu_i\neq \C$ for $i\in \{1,2\}$. 
 
 Let $\mu_1$, $\mu_2$, $\nu_1$, $\nu_2\in G$ so that $U\mu_i\cap V\nu_i=\varnothing$ and $U\mu_i\sqcup V\nu_i\neq \C$ for $i\in \{1,2\}$.  
 
 We first prove a simplified case where the four sets $U\mu_1$, $U\mu_2$, $V\nu_1$, and $V\nu_2$ are pairwise disjoint and where the union of these four sets is not all of Cantor space.  
 
 Set $C\seteq \C\backslash(U\mu_1\sqcup U\mu_2\sqcup V\nu_1\sqcup V\nu_2)$ and consider $\mathcal{D}\seteq \{U\mu_1,U\mu_2,V\nu_1,V\nu_2, C\}$, a partition of $\C$ into five proper clopen sets.
 
 Consider $\gamma_1, \gamma_2, \gamma_3, \gamma_4, \gamma_5\in G$ taken as $\gamma_1=\mu_1^{-1}\mu_2$, $\gamma_2=\nu_1^{-1}\nu_2$, $\gamma_3=\gamma_1^{-1}$, $\gamma_4=\gamma_2^{-1}$ and $\gamma_5=1_G$, the identity of $G$.  As $G$ is approximately full, there is $\gamma\in G$ which agrees with $\gamma_1$ over $U\mu_1$, with $\gamma_2$ over $V\nu_1$, with $\gamma_3$ over $U\mu_2$, and with $\gamma_4$ over $V\nu_2$.  In particular, $(U\mu_1\sqcup V\nu_1)\gamma=(U\mu_2\sqcup V\nu_2)$ and therefore  $\Oo{U\mu_1\sqcup V\nu_1}{G}=\Oo{U\mu_2\sqcup V\nu_2}{G}$ in this case.
 
 Now let us drop the two assumptions that the four sets $U\mu_1$, $U\mu_2$, $V\nu_1$, and $V\nu_2$ are pairwise disjoint and that the union of these four sets is not all of Cantor space. Set $E\seteq\C\backslash(U\mu_1\sqcup V\nu_1)$ and $F\seteq
\C\backslash(U\mu_2\sqcup V\nu_2)$.  Choose $E'\subseteq E$ and $F'\subseteq F$ proper clopen subsets of $E$ and $F$ respectively so that the following properties hold:
\begin{enumerate}
\item $E\backslash E'\neq \varnothing$,
\item $F\backslash F'\neq \varnothing$,  
\item $E'\cap F' = \varnothing$, and
\item $E'\sqcup F'\neq \C$
\end{enumerate}
observing we can find these sets even if $E\cap F\neq \varnothing$.

Now, as $G$ is vigorous, there are group elements $\theta_1, \theta_2\in G$ so that $(U\mu_1\sqcup V\nu_1)\theta_1\subseteq E'$ while $(U\mu_2\sqcup V\nu_2)\theta_2\subseteq F'$.

By the definition of the sets $\Oo{A}{G}$ for $A\in \Cc$, we see that $\Oo{U\mu_1\sqcup V\nu_1}{G}=\Oo{(U\mu_1\sqcup V\nu_1)\theta_1}{G}$ and $\Oo{U\mu_2\sqcup V\nu_2}{G}=\Oo{(U\mu_2\sqcup V\nu_2)\theta_2}{G}$.  However, we have that the four sets $U\mu_1\theta_1, V\nu_1\theta_1, U\mu_2\theta_2, V\nu_2\theta_2$ are pairwise disjoint and have union not all of $\C$ (as they are contained in $E'\sqcup F'$).  Thus it follows from our result in the simplified case that $\Oo{U\mu_1\theta_1\sqcup V\nu_1\theta_1}{G}=\Oo{U\mu_2\theta_2\sqcup V\nu_2\theta_2}{G}$.  In particular, $+$ is well defined.

Now we show that the operation $+$ is associative. Let $X$, $Y$, $Z\in \Cc$. We will show that $(\Oo{X}{G} + \Oo{Y}{G}) + \Oo{Z}{G} = \Oo{X}{G} + (\Oo{Y}{G} + \Oo{Z}{G})$.  As $G$ is vigorous we may find  $\alpha$ and $\beta$ in $G$ so that $X\alpha$, $Y\beta$ and $Z$ are disjoint and where $X\alpha\cup Y\beta\cup Z\neq \C$. Now
\begin{align*}
(\Oo{X}{G} + \Oo{Y}{G}) + \Oo{Z}{G} & = (\Oo{X\alpha}{G} + \Oo{Y\beta}{G}) + \Oo{Z}{G} \\
& = (\Oo{X\alpha \sqcup Y\beta}{G}) + \Oo{Z}{G} \\
& = \Oo{X\alpha \sqcup Y\beta \sqcup Z}{G} \\
& = (\Oo{X\alpha}{G} + \Oo{Y\beta \sqcup Z}{G}) \\
& = \Oo{X\alpha}{G} + (\Oo{Y\beta}{G} + \Oo{Z}{G}) \\
& = \Oo{X}{G} + (\Oo{Y}{G} + \Oo{Z}{G})
\end{align*}
as desired.

Since the unions are commutative it follows that the operation $+$ is commutative.  It remains to show that $\Ooo{G}$ under the operation of $+$ forms a group. 

Let $A$ and $C\in \Cc$.  It is sufficient to find $B \in \Cc$ such that $\Oo{A}{G} + \Oo{B}{G} = \Oo{C}{G}$.

Since $G$ is vigorous we may find $\beta$ in $G$ such that $A\beta \Sne C$. Let $B$ be equal to $C \Sm A\beta$. Now $\Oo{A}{G} + \Oo{B}{G} = \Oo{A\beta \sqcup B}{G} = \Oo{C}{G}$ as desired. 
\end{proof}
Below, we use $\Oo{\varnothing}{G}$ to denote the set $\Ss{A\gamma \Sm A}{A \in \Cc \textrm{ and } \gamma \in G \textrm{ and } A\subsetneq A\gamma}$ and use $\Oo{\C}{G}$ to denote the set $\Ss{\C \Sm A}{A \in \Oo{\varnothing}{G}}$.  We have the following lemma.

\begin{lem}\label{lem:gpStructure}
Let $G \leq \autc$ be approximately full and vigorous.  We have the following.
\begin{enumerate}
\item \label{pt:identity} The set $\Oo{\varnothing}{G}$ is an element of $\Ooo{G}$ and for all $A\in \Cc$, we have $\Oo{A}{G}+\Oo{\varnothing}{G}=\Oo{A}{G}$. 
\item \label{pt:invTot} Let $A\in \Cc$,
\begin{enumerate}
\item \label{pt:inverse}if $\alpha,\beta\in G$ so that $A\subsetneq A\alpha$ and $A\beta \subsetneq A\alpha\backslash A$, then $\Oo{A\alpha\backslash (A\sqcup A\beta)}{G}$ is the inverse of $\Oo{A}{G}$, and 
\item \label{pt:totality}$\Oo{A}{G}+\Oo{\C\backslash A}{G}=\Oo{\C}{G}$.
\end{enumerate}
\end{enumerate}
\end{lem}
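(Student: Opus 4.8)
The plan is to identify $\Oo{\varnothing}{G}$ with the identity of the abelian group $(\Ooo{G},+)$ from Proposition~\ref{prop:Age}, and then to read off (2a) and (2b) from this identification together with the addition rule of Definition~\ref{def:Age}. The single computation underlying everything is: whenever $A\in\Cc$ and $\gamma\in G$ satisfy $A\Sn A\gamma$, the set $A\gamma$ splits as the disjoint union $A\sqcup(A\gamma\Sm A)$ with $A\gamma\neq\C$, so Definition~\ref{def:Age} gives
\[
\Oo{A}{G}+\Oo{A\gamma\Sm A}{G}=\Oo{A\gamma}{G}=\Oo{A}{G},
\]
the last equality because $A\gamma$ lies in the $G$-orbit of $A$. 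Since $\Ooo{G}$ is a group, cancelling $\Oo{A}{G}$ shows that $\Oo{A\gamma\Sm A}{G}$ is the identity $0$ for every such pair $(A,\gamma)$. Hence every member of $\Oo{\varnothing}{G}$ lies in the single orbit $0$; this set is moreover non-empty, since $G$, being vigorous, is flexible (the forward direction of Lemma~\ref{lem:flexIsVigIfAF}), so some $A'\in\Cc$ can be compressed properly into itself and the inverse homeomorphism then produces a pair with $A\Sn A\gamma$.

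First I would finish (1) with the reverse inclusion. Fix one $C_0=A_0\gamma_0\Sm A_0\in\Oo{\varnothing}{G}$ and check its orbit stays inside $\Oo{\varnothing}{G}$: for $\delta\in G$ one has $C_0\delta=(A_0\delta)\eta\Sm(A_0\delta)$ with $\eta\seteq\delta^{-1}\gamma_0\delta$, and $A_0\delta\Sn(A_0\delta)\eta$ because $A_0\Sn A_0\gamma_0$. Thus $\Oo{C_0}{G}\subseteq\Oo{\varnothing}{G}$, which combined with the previous paragraph yields $\Oo{\varnothing}{G}=\Oo{C_0}{G}=0\in\Ooo{G}$ and the identity law $\Oo{A}{G}+\Oo{\varnothing}{G}=\Oo{A}{G}$.

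Part (2a) is then purely formal. Setting $D\seteq A\alpha\Sm(A\cup A\beta)$, the hypotheses $A\Sn A\alpha$ and $A\beta\Sn A\alpha\Sm A$ give the disjoint decomposition $A\alpha=A\sqcup A\beta\sqcup D$. Using $A\beta$ as a representative of $\Oo{A}{G}$ (legitimate since $A\beta$ is in the orbit of $A$) and observing $A\beta\cup D=A\alpha\Sm A\neq\C$, the addition rule gives $\Oo{A}{G}+\Oo{D}{G}=\Oo{A\alpha\Sm A}{G}$, which is $0$ by part (1) since $A\Sn A\alpha$. So $\Oo{D}{G}$ is the inverse of $\Oo{A}{G}$.

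The one genuinely non-formal point is (2b): the naive representatives $A$ and $\C\Sm A$ partition all of $\C$, a combination the addition rule forbids, so I must first vacate a point. I would record that $\Oo{\C}{G}$ is a single orbit — writing $\Oo{\varnothing}{G}=\Oo{C_0}{G}$, one has $\Oo{\C}{G}=\{(\C\Sm C_0)\delta:\delta\in G\}=\Oo{\C\Sm C_0}{G}\in\Ooo{G}$ — and then, using flexibility (available since $G$ is vigorous), choose $\nu\in G$ with $(\C\Sm A)\nu\Sn(\C\Sm A)$. Taking $\mu=1$ and $W\seteq A\cup(\C\Sm A)\nu$ gives a disjoint union with $W\Sn\C$, so $\Oo{A}{G}+\Oo{\C\Sm A}{G}=\Oo{W}{G}$. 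The decisive step is to certify the complement as an identity element: with $B\seteq(\C\Sm A)\nu$ one has $B\Sn(\C\Sm A)=B\nu^{-1}$, whence $\C\Sm W=(\C\Sm A)\Sm(\C\Sm A)\nu=B\nu^{-1}\Sm B\in\Oo{\varnothing}{G}$, and therefore $W\in\Oo{\C}{G}$. Since $\Oo{\C}{G}$ is a single orbit containing $W$, we conclude $\Oo{W}{G}=\Oo{\C}{G}$, proving (2b). The point to be careful about is exactly this manoeuvre of shrinking one side to free a point and then recognising the leftover as a member of the identity class $\Oo{\varnothing}{G}$.
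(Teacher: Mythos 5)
Your proposal is correct and follows essentially the same route as the paper's proof: the disjoint-union computation $\Oo{A}{G}+\Oo{A\gamma\Sm A}{G}=\Oo{A\gamma}{G}=\Oo{A}{G}$ plus the conjugation identity $(C\rho\Sm C)\delta=(C\delta)\rho^\delta\Sm C\delta$ to pin down $\Oo{\varnothing}{G}$ as the identity orbit, then the decomposition $A\alpha=A\sqcup A\beta\sqcup D$ for inverses, and finally shrinking one summand so the leftover complement lies in $\Oo{\varnothing}{G}$ for totality. The only (immaterial) difference is that in (2b) you compress $\C\Sm A$ where the paper compresses $A$.
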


\begin{proof}
Let $A, B\in \Cc$ and $\alpha, \beta\in G$ so that $A\subsetneq A\alpha$ and $B\subsetneq B\beta$.

It is the case that $\Oo{A\alpha \backslash A}{G}+\Oo{A}{G}= \Oo{A\alpha}{G}=\Oo{A}{G}$, so $\Oo{A\alpha \backslash A}{G}$ is the identity for the group $\Ooo{G}$.  Similarly, $\Oo{B\beta \backslash B}{G}$ is also the identity of $\Ooo{G}$, so we have $\Oo{A\alpha \backslash A}{G}=\Oo{B\beta \backslash B}{G}$.  In particular there is $\gamma\in G$ so that $(A\alpha \backslash A)\gamma = (B\beta\backslash B)$.  From this it follows that $\Oo{\varnothing}{G}\subseteq (A\alpha\backslash A)G=\Oo{A\alpha \backslash A}{G}$.

Observing that for any $C\in\Cc$ and $\delta,\rho\in G$ with $C\subsetneq C\rho$ we have $(C\rho\backslash C)\cdot \delta = (C\delta)\rho^\delta\backslash C\delta$  we see that $\Oo{\varnothing}{G}\supseteq(A\alpha\backslash A)G=\Oo{A\alpha\backslash A}{G}$.  It now follows that $\Oo{\varnothing}{G}=\Oo{A\alpha\backslash A}{G}$, and so $\Oo{\varnothing}{G}$ is an element of $\Ooo{G}$ and in particular it is the identity under $+$.

In passing we note that the argument above shows that the set $\Oo{\C}{G}$ is a group element, as the orbit of the complement of a proper clopen set is the orbit of a proper clopen set.

For point (\ref{pt:inverse}) we recall from the last paragraph of the proof of Proposition \ref{prop:Age} that if $A,C\in \Cc$ then choosing any $\beta$ so that $A\beta\subsetneq C$ we obtain the formula $\Oo{A}{G}+\Oo{C\backslash A\beta}{G}=\Oo{C}{G}$.  Therefore, if we consider some $\alpha\in G$ so that $A\subsetneq A\alpha$ then  $(A\alpha\backslash A)\in \Cc$ and $\Oo{\varnothing}{G} = \Oo{A\alpha\backslash A}{G}$.  Thus, setting $C\seteq A\alpha\backslash A$ in our formula and choosing $\beta\in G$ so that $A\beta\subsetneq C$ then we obtain the desired result for the inverse of $\Oo{A}{G}$.

Point (\ref{pt:totality}) follows easily as well.  Let $A\in\Cc$, and choose some $\gamma$ so that $A\gamma \subsetneq A$.  It then follows that $\Oo{A}{G}+\Oo{\C\backslash A}{G}= \Oo{A\gamma}{G} +\Oo{\C\backslash A}{G} = \Oo{A\gamma \sqcup \C\backslash A}{G}= \Oo{\C\backslash (A\backslash A\gamma)}{G}$.
But $\Oo{A\backslash A\gamma}{G}=\Oo{\varnothing}{G}$ so $\Oo{\C\backslash (A\backslash A\gamma)}{G}=\Oo{\C}{G}$ by the definition of $\Oo{\C}{G}$ and so we have our desired result.
\end{proof}
The reader may check the following.
{
\begin{rmk}
A vigorous and approximately full group \(G\) is dense in \(\autc\) if and only if \(\Ooo{G}\) is trivial.
\end{rmk}
}

\subsubsection{Four Lemmas towards Theorem \ref{sub2}}

The following lemma is a well known and useful purely algebraic statement about finitely generated subgroups of groups.

\begin{lem} \label{lem:res}
If $F \leq G$ are groups, $F$ is finitely generated, and $Y$ is a generating set for $G$, then there exists a finite subset $Y_0$ of $Y$ with $\An{Y_0} \geq F$. 
\end{lem}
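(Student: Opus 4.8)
The plan is to exploit the finiteness of $X$ together with the fact that membership in a generated subgroup means being a finite product of generators and their inverses. First I would write $X=\{x_1,\ldots,x_n\}$, which is a finite set by hypothesis. Since $F\leq G=\langle Y\rangle$, each $x_i$ lies in $G$, so for every $i$ there is an expression $x_i = y_{i,1}^{\,\epsilon_{i,1}}\cdots y_{i,k_i}^{\,\epsilon_{i,k_i}}$ as a finite product, with each $y_{i,j}\in Y$ and each $\epsilon_{i,j}\in\{1,-1\}$.

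Next I would collect the letters used: set $Y_0 := \{\,y_{i,j} \mid 1\leq i\leq n,\ 1\leq j\leq k_i\,\}$. This is a subset of $Y$, and it is finite because it is a union of the $n$ finite sets $\{y_{i,1},\ldots,y_{i,k_i}\}$, with $n$ finite by the finiteness of $X$. By construction each $x_i$ is a product of elements of $Y_0$ and their inverses, so $x_i\in\langle Y_0\rangle$ for every $i$. Hence $F=\langle X\rangle=\langle x_1,\ldots,x_n\rangle\leq\langle Y_0\rangle$, which is exactly the desired conclusion $\langle Y_0\rangle\geq F$.

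I do not expect any genuine obstacle here; the statement is a direct unwinding of the definition of generation. The one point worth flagging as the crux of the argument is the indispensable role of the finiteness of $X$: each individual $x_i$ consumes only finitely many letters of $Y$, but it is precisely because there are finitely many $x_i$ that the aggregate $Y_0$ stays finite. Were $X$ allowed to be infinite the union could in principle require all of $Y$, and no finite $Y_0$ would be guaranteed. No appeal to the topological or dynamical structure of the earlier sections is needed, so this lemma is purely group-theoretic as its placement suggests.
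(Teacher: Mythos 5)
Your proof is correct; the paper itself leaves this lemma as an exercise for the reader, and your argument---expressing each of the finitely many generators of $F$ as a finite word in $Y^{\pm 1}$ and collecting the finitely many letters used into $Y_0$---is exactly the standard unwinding that exercise intends. Your remark that finiteness of $X$ is what keeps $Y_0$ finite correctly identifies the only point of substance.
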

\begin{proof}
This is left as an exercise for the reader.
%\textcolor{red}{Since $X \Se \An{Y}$ we may for each element of $X$ fix a product over $Y$ equal to that element of $X$. Since $X$ is finite and the set of elements of $Y$ used in any one of the products is finite, the set $Y_0$ of elements of $Y$ that are used in any of the products is finite. However $\An{Y_0} \supseteq X$ so $\An{Y_0} \geq \An{X} = F$ as desired.}
\end{proof}

We prove the next lemma to allow us to embed finite alternating groups in any group in $\K$ in a natural way.

We say an element of $x$ of $\Ooo{G}$ is  \emph{even} if there exists $y$ in $\Ooo{G}$ such that $x = y + y$.
\begin{lem} \label{lem:eag}
Let $G \leq \autc$ be vigorous and approximately full. Let $C$ be in $\Cc$ and let $\Gamma$ a finite subset of $G$ be such that if $\gamma_1, \gamma_2\in \Gamma$ are distinct then $C\gamma_1$ and $C\gamma_2$ are disjoint.

Let $\pi$ be a permutation of the set $\Gamma$. Let $\delta \in \autc$ be such that for each $\gamma \in \Gamma$ the homeomorphism $\delta$ agrees with $\gamma^{-1} (\gamma \cdot\pi)$ on $C\gamma$ and has support contained in $\bigcup\limits_{\gamma \in \Gamma} C\gamma$. If $\Oo{C}{G}$ is an even element of $\Ooo{G}$ or $\pi$ is an even permutation then $\delta$ is in $G$.
\end{lem}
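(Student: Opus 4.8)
The plan is to study the whole assignment $g \mapsto \delta$ as $g$ ranges over $\operatorname{Sym}(\Gamma)$. Writing $\delta_g$ for the homeomorphism attached to a permutation $g$ of $\Gamma$ (so the lemma's $\delta$ is $\delta_g$), a direct computation on each block $C\gamma$ shows $\delta_{g_1}\delta_{g_2} = \delta_{g_1 g_2}$, so $g \mapsto \delta_g$ is a homomorphism from $\operatorname{Sym}(\Gamma)$ into $\autc$. Since $\delta_g$ agrees with $\gamma^{-1}(\gamma g) \in G$ on each $C\gamma$ and with $1_G$ off $\bigcup_{\gamma\in\Gamma} C\gamma$, and these sets form a finite clopen partition of $\C$, every $\delta_g$ lies in the full group $H$ of $G$. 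Consequently $N := \{g \in \operatorname{Sym}(\Gamma) : \delta_g \in G\}$ is a subgroup of $\operatorname{Sym}(\Gamma)$, and the lemma becomes the pair of assertions that $N$ contains every even permutation, and that $N = \operatorname{Sym}(\Gamma)$ whenever $\Oo{C}{G}$ is even.

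Because $N$ is a subgroup and $\operatorname{Alt}(\Gamma)$ is generated by $3$-cycles, the first assertion reduces to showing $\delta_g \in G$ for a single $3$-cycle $g = (\gamma_1\,\gamma_2\,\gamma_3)$. I would then reduce the second assertion to the first: if $\Oo{C}{G}$ is even then, by the definition of even together with Proposition \ref{prop:Age}, I can find disjoint $E, F \in \Cc$ with $C = E \sqcup F$ and $\Oo{E}{G} = \Oo{F}{G}$, so there is $k \in G$ with $Ek = F$. Each block splits as $C\gamma = E\gamma \sqcup F\gamma$, the four half-blocks $E\gamma_1, E\gamma_2, F\gamma_1, F\gamma_2$ are pairwise disjoint, and the transposition $\delta_{(\gamma_1\,\gamma_2)}$ is exactly the homeomorphism attached to the base set $E$, the index set $\{\gamma_1, \gamma_2, k\gamma_1, k\gamma_2\}$, and the permutation $(\gamma_1\,\gamma_2)(k\gamma_1\,k\gamma_2)$. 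As this permutation is even, the first assertion (applied with $E$ in place of $C$) gives $\delta_{(\gamma_1\,\gamma_2)} \in G$; with $\operatorname{Alt}(\Gamma) \le N$ and the fact that transpositions generate $\operatorname{Sym}(\Gamma)$, this yields $N = \operatorname{Sym}(\Gamma)$.

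Everything therefore rests on realizing a single $3$-cycle inside $G$. The available tools are that $G$ is strongly approximately full (Corollary \ref{cor:vigappfulstrongappfull}) and that the elements of small support of $G$ are closed under conjugation by $H$ (the conjugation argument used in proving Theorem \ref{thm:fes}(\ref{fesmat})). Fixing a ``parking'' set $Y \in \Cc$ disjoint from the three blocks, strong approximate fullness produces $\chi \in G$ agreeing with $\delta_g$ on $C\gamma_1 \cup C\gamma_2 \cup C\gamma_3$; then $\delta_g^{-1}\chi$ is supported on $Y$, so $\delta_g \in G$ if and only if this leftover lies in $G$. Running the same construction for the two transpositions whose product is $g$, with the parking set $Y$ held fixed, the product of the two leftovers is precisely the obstruction to $\delta_g \in G$. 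The decisive point is that the leftover produced by a transposition of two blocks of class $\Oo{C}{G}$ determines a well-defined class in $\Ooo{G}/2\Ooo{G}$ equal to $\Oo{C}{G}$ modulo $2\Ooo{G}$, and that this assignment is a homomorphism $\operatorname{Sym}(\Gamma) \to \Ooo{G}/2\Ooo{G}$; a $3$-cycle is a product of two transpositions, so its class is $2\,\Oo{C}{G} \equiv 0$, forcing the total leftover into $G$.

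The main obstacle is exactly this last step: setting up the leftover class rigorously, proving it independent of the choices made in strong approximate fullness, establishing additivity (the homomorphism property) using the group structure on $\Ooo{G}$ from Proposition \ref{prop:Age} and Lemma \ref{lem:gpStructure}, and identifying its value on a transposition with $\Oo{C}{G}$ modulo $2\Ooo{G}$. Approximate fullness on its own only relocates the leftover among clopen sets and never removes it, so the homology group $\Ooo{G}$ and the parity bookkeeping it supports are indispensable; once the obstruction homomorphism is in hand, each hypothesis ($g$ even, or $\Oo{C}{G}$ even) is precisely the condition making the obstruction vanish, and the lemma follows.
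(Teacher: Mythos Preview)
Your reductions are sound and essentially match the paper: the map $g\mapsto\delta_g$ is a homomorphism, so it suffices to treat $3$-cycles, and your reduction of the ``$\Oo{C}{G}$ even'' case to the even-permutation case by splitting $C=E\sqcup F$ and doubling the index set is exactly what the paper does (they double all of $\Gamma$ at once rather than one transposition at a time, but the content is the same).

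The gap is in your treatment of the $3$-cycle itself. You produce $\chi\in G$ agreeing with $\delta_g$ on the three blocks and then propose to show that the ``leftover'' $\delta_g^{-1}\chi$ lies in $G$ by assigning to it a class in $\Ooo{G}/2\Ooo{G}$ and arguing this class vanishes for $3$-cycles. But the leftover is a homeomorphism, not a clopen set, and no map from (elements of) the full group to $\Ooo{G}$ or $\Ooo{G}/2\Ooo{G}$ has been defined in the paper. What you are reaching for is an index map \`a la Matui, and building it---well-definedness independent of the choices in strong approximate fullness, additivity, and the identification of its value on a transposition---is a substantial piece of work that you explicitly leave undone. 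As it stands, the heart of the argument is a promissory note.

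The paper bypasses all of this with a direct commutator trick. For a $3$-cycle on blocks $C\alpha,C\beta,C\gamma$ whose union is proper, pick disjoint parking sets $U,V$ away from the blocks. Approximate fullness gives $\mu\in G$ supported in $C\alpha\cup C\beta\cup V$ that swaps $C\alpha\leftrightarrow C\beta$ exactly (with uncontrolled behaviour only on $V$), and $\nu\in G$ supported in $C\beta\cup C\gamma\cup U$ that swaps $C\beta\leftrightarrow C\gamma$ exactly. Because the garbage regions $U$ and $V$ are disjoint from each other and from the relevant blocks, the commutator $[\mu,\nu]$ is supported in $C\alpha\cup C\beta\cup C\gamma$ and equals $\delta_g$ on the nose. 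If the three blocks fill $\C$, split $C$ into $D$ and $C\setminus D$ and run the argument on each half. No homology is needed for this step; the role of $\Ooo{G}$ in the paper is confined to the even-$\Oo{C}{G}$ reduction, which you already have.
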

\begin{proof}
First assume that $\pi$ is an even permutation.

We may assume $|\Gamma| \geq 3$ because the alternating group on $\Gamma$ is trivial if $|\Gamma|<3$.  If $|\Gamma|\geq 3$ then the embedded copy of the alternating group on $\Gamma$ is generated by its embedded $3$-cycles, so we only need address the case where $\Gamma = \{\alpha,\beta,\gamma\}$, a set of size $3$, and where we further assume that $\pi$ is the 3-cycle $(\alpha\ \beta\ \gamma)$.

First we assume $(C\alpha) \cup (C\beta) \cup (C\gamma) \subsetneq \C$.

Let $U$ and $V$ in $\Cc$ be disjoint so that $U$ and $V$ are also disjoint from $(C\alpha) \cup (C\beta) \cup (C\gamma)$. Let $\mu \in \pstab_G(\C \Sm ((C\alpha) \cup (C\beta) \cup V))$ be such that $\mu$ agrees with $\alpha^{-1}\beta$ on $C\alpha$ and $\beta^{-1}\alpha$ on $C\beta$. Let $\nu \in \pstab_G(\C \Sm ((C\beta) \cup (C\gamma) \cup U))$ be such that $\nu$ agrees with $\beta^{-1}\gamma$ on $C\beta$ and $\gamma^{-1}\beta$ on $C\gamma$.

By inspection the commutator $[\mu,\nu]$ has support contained in $(C\alpha) \cup (C\beta) \cup (C\gamma)$ and agrees with $\alpha^{-1}\beta$ on $C\alpha$ and agrees with $\beta^{-1}\gamma$ on $C\beta$ and agrees with $\gamma^{-1}\alpha$ on $C\gamma$ as desired.

Now assume $(C\alpha) \cup (C\beta) \cup (C\gamma) =\C$. Let $D \in \Cc$ be a proper subset of $C$.

Let $\sigma \in \autc$ be the homeomorphism of $\C$ with support equal to $(D\alpha) \cup (D\beta) \cup (D\gamma)$ which agrees with $\alpha^{-1}\beta$ on $D\alpha$ and agrees with $\beta^{-1}\gamma$ on $D\beta$ and agrees with $\gamma^{-1}\alpha$ on $D\gamma$. Let $\rho \in \autc$ be the homeomorphism of $\C$ with support equal to $((C \Sm D)\alpha) \cup ((C \Sm D)\beta) \cup ((C \Sm D)\gamma)$ which agrees with $\alpha^{-1}\beta$ on $(C \Sm D)\alpha$ and agrees with $\beta^{-1}\gamma$ on $(C \Sm D)\beta$ and agrees with $\gamma^{-1}\alpha$ on $(C \Sm D)\gamma$.

By the first case $\sigma$ and $\rho$ are both in $G$. The product $\sigma\rho$ is as desired and the case where $\pi$ is an even permutation is complete.

If $\Oo{C}{G}$ is even then we may find a non-empty proper clopen subset $C_0$ of $\C$ such that $\Oo{C_0}{G} + \Oo{C_0}{G} = \Oo{C}{G}$. We may assume that by the definition of $\Oo{C}{G}$ and of our ``$+$'' operator that $C_0$ is a non-empty proper subset of $C$, and in particular we may find $\tau$ in $G$ such that $C_0\tau = C \Sm C_0$.

Let $\Gamma_0$ be the set $\Gamma \cup \Ss{\tau \gamma}{\gamma \in \Gamma}$. Let $\pi_0$ be the permutation of $\Gamma_0$ such that $\gamma \cdot\pi_0 = \gamma \cdot\pi$ and $\tau\gamma \cdot\pi_0 = \tau(\gamma \cdot \pi)$.

Now the sets $\{C_0\gamma_0\}_{\gamma_0 \in \Gamma_0}$ are pairwise disjoint and $\pi_0$ is an even permutation of $\Gamma_0$. By applying the case for even permutations we find the desired homeomorphism.
\end{proof}

\begin{lem} \label{lem:part}
Let $G \leq \autc$ be approximately full and vigorous. Let $T$ be a non-empty finite set and let $f$ be a map from $T$ to $\Ooo{G}$. There exists a partition $\{C_t \in (t)f\}_{t \in T}$ of $\C$ exactly if $\sum\limits_{t \in T} ((t)f) = \Oo{\C}{G}$.
\end{lem}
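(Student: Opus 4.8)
The plan is to fix an enumeration $T=\{t_1,\dots,t_k\}$, write $c_i\seteq t_if$, and reduce both implications to two facts about the abelian group $\Ooo{G}$ from Proposition \ref{prop:Age}. The first is that disjoint proper representatives add ``on the nose'': if $U,V\in\Cc$ are disjoint with $U\cup V\neq\C$ then $\Oo{U}{G}+\Oo{V}{G}=\Oo{U\cup V}{G}$, which is immediate from Definition \ref{def:Age} (take $\mu=\nu=1_G$). The second is the totality identity $\Oo{A}{G}+\Oo{\C\Sm A}{G}=\Oo{\C}{G}$ from Lemma \ref{lem:gpStructure}.\ref{pt:totality}. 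I will prove the statement for $k\ge 2$; the degenerate case $k=1$ is dealt with by the convention that $\C$ itself is a representative of $\Oo{\C}{G}$, under which $\{\C\}$ is a partition lying in $t_1f$ precisely when $t_1f=\Oo{\C}{G}$.

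For the forward implication, suppose $\{C_t\}_{t\in T}$ is a partition of $\C$ with $C_t\in tf$ for each $t$. The blocks are pairwise disjoint, and for every $j<k$ the partial union $\bigcup_{i\le j}C_{t_i}$ is a nonempty proper clopen set (its complement contains the nonempty set $C_{t_k}$). Hence the first fact lets me telescope $\sum_{i=1}^{k-1}c_i=\Oo{\bigcup_{i<k}C_{t_i}}{G}=\Oo{\C\Sm C_{t_k}}{G}$; adding the final block and applying the totality identity gives $\sum_{i=1}^{k}c_i=\Oo{\C\Sm C_{t_k}}{G}+\Oo{C_{t_k}}{G}=\Oo{\C}{G}$, as required.

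For the reverse implication the key step is a sublemma: given finitely many classes $c_1,\dots,c_m\in\Ooo{G}$ there are pairwise disjoint representatives $D_i\in c_i$ whose union is a proper clopen set $A$ with $\Oo{A}{G}=\sum_i c_i$. I would prove this by induction on $m$. Given disjoint $D_1,\dots,D_{m-1}$ with proper union $A'$, choose a proper nonempty clopen $W\subsetneq\C\Sm A'$, pick any representative $F\in c_m$, and use that vigorous groups are flexible (as noted in the proof of Lemma \ref{lem:flexIsVigIfAF}) to find $\theta\in G$ with $F\theta\subseteq W$; then $D_m\seteq F\theta\in c_m$ is disjoint from $A'$ with $A'\cup D_m$ still proper, and the first fact shows the enlarged union represents $\sum_{i\le m}c_i$. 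Applying the sublemma to $c_1,\dots,c_{k-1}$ produces disjoint $D_1,\dots,D_{k-1}$ with proper union $A$ and $\Oo{A}{G}=\sum_{i<k}c_i$. Then
\[
\Oo{A}{G}+c_k=\sum_{i=1}^{k}c_i=\Oo{\C}{G}=\Oo{A}{G}+\Oo{\C\Sm A}{G},
\]
so cancelling $\Oo{A}{G}$ in the group $\Ooo{G}$ gives $c_k=\Oo{\C\Sm A}{G}$; hence $D_k\seteq\C\Sm A$ lies in $c_k$ and $\{D_1,\dots,D_k\}$ is the desired partition.

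I expect the sublemma in the reverse direction to be the main obstacle. The naive approach of iterating the binary operation $+$ fails to produce a single family of simultaneously disjoint sets, because each evaluation of $+$ is free to relocate representatives that were already placed. The resolution is to hold the running union $A'$ fixed and move only the incoming representative, which is exactly what vigorousness (equivalently flexibility) guarantees: there is always a proper clopen subset of $\C\Sm A'$ into which $c_m$ can be pushed while keeping the total union proper. Once the sublemma is in hand, everything else — the telescoping and the fact that the final block is forced to be $\C\Sm A$ — is a formal consequence of the group structure of $\Ooo{G}$, namely cancellation together with the totality identity.
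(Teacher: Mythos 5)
Your proof is correct and takes essentially the same approach as the paper's: the paper likewise builds the partition greedily, choosing for each class in turn a representative that is a proper subset of the complement of the blocks already placed (which is exactly your flexibility sublemma), and then identifies the forced final block $\C\Sm A$ as lying in the last class via the group structure of $\Ooo{G}$ (Lemma \ref{lem:gpStructure}). The only difference is one of detail: you make explicit the telescoping in the forward direction and the cancellation step that the paper compresses into citations of Lemma \ref{lem:gpStructure}.
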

\begin{proof}
It follows immediately from Lemma \ref{lem:gpStructure} that if such a partition exists then $\sum\limits_{t \in T} ((t)f) = \Oo{\C}{G}$.

Given such a set $T$ we will construct such a partition. First put a total order on $T$. Let $l$ be the greatest element of $T$. If $C_t$ has been assigned for each $t < q$ for some $q \in T \Sm\{l\}$ and $C_q$ has not been assigned then let $C_q$ be an element of $(q)f$ and a non-empty proper subset $\C \Sm \bigsqcup\limits_{t < q}C_t$. If $C_t$ has been defined for each $t \in T \Sm \{l\}$ then let $C_l$ be the set $\C \Sm \bigsqcup\limits_{t \in T \Sm \{l\}} C_t$. The set $C_l$ is in $(q)f$ by  Lemma \ref{lem:gpStructure}.
\end{proof}

Lemmas \ref{lem:eag} and \ref{lem:part} combine powerfully to construct bespoke (torsion) elements in $G$.  The first of these roughly says we can find elements of $G$ which induce ``even permutations'' of a partition of $\C$ into proper clopen sets by focusing on elements of the partition with the same homology type in $\Ooo{G}$, while the second informs us of the existence of partitions where the parts satisfy algebraic conditions measured by $\Ooo{G}$.  We will see a powerful use of this synergy in the proof of Theorem \ref{thm:sub2}.

The final lemma is a technical lemma which we apply in the proof of Theorem \ref{thm:sub2}.  In this lemma we use a closed real interval with integer bounds to represent the integers between and including those bounds.  E.g., $[3,6] = \{3,4,5,6\}$ in the notation below.
 
 %if $A$ is a set of natural numbers and $b$ is a natural number then $A - b$ is intended to mean the set $\Ss{a-b}{a \in A}$.

\begin{lem} \label{lem:num}
For each positive integer $j$ and $i \in [1, j]$ the set
%\[([1,l] \cup \Ss{i(l+1)}{i \in [1,l]})-i \cap ([1,l] \cup \Ss{i(l+1)}{i \in [1,l]})-i(l+1)\]
%is equal to $\{0\}$.
\[\Ss{xu - i}{x \in [1,j], u \in \{1,j+1\}} \cap \Ss{yv - i(j+1)}{y \in [1,j], v \in \{1,j+1\}}\]
is equal to $\{0\}$.
\end{lem}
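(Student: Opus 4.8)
The plan is to verify directly that $0$ lies in the intersection, and then to decompose each of the two sets according to the two possible values of its inner parameter ($u\in\{1,j+1\}$, respectively $v\in\{1,j+1\}$) and rule out every other coincidence by elementary sign, size, and divisibility comparisons. Write $S_1$ for the first set and $S_2$ for the second.

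That $0\in S_1\cap S_2$ is immediate: choosing $u=1$ and $x=i$ gives $xu-i=0$ (legitimate since $i\in[1,j]$), while choosing $v=j+1$ and $y=i$ gives $yv-i(j+1)=0$. It remains to show these are the only common element.

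I would split $S_1=A_1\cup B_1$ and $S_2=A_2\cup B_2$ according to the parameter values, where
\[
A_1=\Ss{x-i}{x\in[1,j]},\qquad B_1=\Ss{x(j+1)-i}{x\in[1,j]},
\]
\[
A_2=\Ss{y-i(j+1)}{y\in[1,j]},\qquad B_2=\Ss{(y-i)(j+1)}{y\in[1,j]}.
\]
Then $S_1\cap S_2=(A_1\cap A_2)\cup(A_1\cap B_2)\cup(B_1\cap A_2)\cup(B_1\cap B_2)$, and I would dispose of three of these four as empty. Since $i\ge 1$, each element of $B_1$ is at least $(j+1)-i\ge 1$ and hence positive, whereas each element of $A_2$ is at most $j-i(j+1)\le -1$ and hence negative, so $B_1\cap A_2=\varnothing$. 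An equality in $A_1\cap A_2$ would force $y-x=ij\ge j$, impossible since $x,y\in[1,j]$ give $y-x\le j-1$. An equality in $B_1\cap B_2$ rearranges to $(x-y+i)(j+1)=i$, forcing $(j+1)\mid i$, which is impossible because $1\le i\le j<j+1$.

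The decisive case is $A_1\cap B_2$, which is exactly where the surviving $0$ appears. For any $x,i\in[1,j]$ one has $|x-i|\le j-1<j+1$, so every element of $A_1$ has absolute value strictly smaller than $j+1$; since every element of $B_2$ is an integer multiple of $j+1$, a common value must be the unique such multiple of small modulus, namely $0$, attained precisely when $x=y=i$. Assembling the four cases yields $S_1\cap S_2=\{0\}$. I expect no genuine obstacle here, only careful bookkeeping of the inequalities; the one idea worth isolating is the contrast between the ``small'' shifts produced by the $u=1$ (or $v=1$) branch and the multiples of $j+1$ produced by the $v=j+1$ branch, which is what pins the intersection down to $0$.
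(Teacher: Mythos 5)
Your proof is correct, and it takes a genuinely different route from the paper's. The paper works with a single arbitrary element $z = xu - i = yv - i(j+1)$ of the intersection and reduces the equation modulo $j$: since $j+1 \equiv 1 \pmod{j}$, this immediately forces $x \equiv y \pmod{j}$, hence $x = y$; a sign comparison then gives $u = 1$, $v = j+1$, and a final linear manipulation yields $x = i$ and $z = 0$. You instead decompose each set according to the parameter values $u,v \in \{1, j+1\}$ and dispatch the four resulting cases by sign ($B_1 \cap A_2$), size ($A_1 \cap A_2$), and divisibility by $j+1$ ($B_1 \cap B_2$ and $A_1 \cap B_2$), never invoking arithmetic modulo $j$ at all. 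The paper's argument is slicker --- one reduction does the work of your first three cases simultaneously --- while yours is more transparent about \emph{where} the intersection point lives: it isolates the branch $u=1$, $v=j+1$ as the only one producing a coincidence, via the clean contrast between shifts of absolute value at most $j-1$ and multiples of $j+1$. Both arguments are elementary, complete, and handle the boundary cases (e.g. $j=1$) correctly, so the choice between them is purely one of taste.
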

\begin{proof}
Note that \(0\) is an element of this set. Let \(z\) be an arbitrary element of this intersection. We show that \(z=0\).

Suppose that $z=xu-i=yv-i(j+1)$ where $x,y \in [1,j]$, $u,v\in \{1,j+1\}$. 
It follows that $x-i\equiv y-i$ mod \(j\), so $x=y$ and $u<v$. It follows that $u=1$ and $v=j+1$. Therefore $z=x-i=x(j+1)-i(j+1)$ and so $ij=xj$ and \(x=i\). Finally $z=i-i=0$ as required.
\end{proof}

\subbtwo
\begin{proof}
Let $C \in \Cc$ be such that $\Oo{C}{G} = \Oo{\C}{G}$.
Let $D \in \Cc$ be a proper subset of $\C \Sm C$ such that $\Oo{D}{G} = -2\Oo{C}{G}$.

By Proposition \ref{prop:psk} the group $\pstab_G(\C \Sm (C \sqcup D))$ is in $\Kkf{C \sqcup D}$.  Now set 
$$P \seteq \Ss{\alpha \in \pstab_G(\C \Sm (C \sqcup D))}{\alpha \textrm{ is of order } n}.$$
By Lemma \ref{lem:eag} arbitrarily large finite alternating groups embed in $\pstab_G(\C \Sm (C \sqcup D))$ therefore $P$ must be non-empty.

Since $\pstab_G(\C \Sm (C \sqcup D))$ is simple and the set $P$ is closed under conjugation by elements of $\pstab_G(\C \Sm (C \sqcup D))$ it follows that $P$ generates $\pstab_G(\C \Sm (C \sqcup D))$.
Let $Q \seteq \Ss{[\alpha,\beta]}{\alpha,\beta \in P}$.
The group $\pstab_G(\C \Sm (C \sqcup D))$ is infinite and simple and therefore not commutative. Consequently there must be non-identity elements of $Q$.

Since $\pstab_G(\C \Sm (C \sqcup D))$ is simple and the set $Q$ is closed under conjugation the set $Q$ must generate $\pstab_G(\C \Sm (C \sqcup D))$.

By Lemma \ref{lem:res} there exists a $j$ and elements $\{\psi_i\}_{0\leq i < j} \Se \pstab_G(\C \Sm (C \sqcup D))$ and $\{\omega_i\}_{0 \leq i < j} \Se \pstab_G(\C \Sm (C \sqcup D))$ of order $n$ such that the set $\{[\psi_i,\omega_i]\}_{0 \leq i < j}$ generates $\pstab_G(\C \Sm (C \sqcup D))$.

Let $x \in \N$ be at least $2j(j+1)$.

By Lemma \ref{lem:part} we may choose a partition $P$ of $\C$ into proper clopen sets so that $|P|=3x+1$, and where $P$ itself admits a partition as $X\sqcup Y$ so that
\begin{enumerate}
\item $X\subseteq [C]_G$, \item $Y\subseteq [D]_G$, 
\item $|X| = 2x+1$, and
\item $|Y|=x.$
\end{enumerate}  (For example let \(T= \{1, \ldots, 2x+1\}\sqcup \{-1, \ldots, -x\}\) and let \(f:T\to \Ooo{G}\) be the function  mapping the positive values to \(\Oo{C}{G}\in \Ooo{G}\) and the negative values to \(\Oo{D}{G}\in \Ooo{G}\).) We may further assume that $C \in X$ and $D \in Y$.

By Lemma \ref{lem:eag} there exists $\tau \in \pstab_G(\C \Sm \bigsqcup X)$ of order \(2x+1\) respecting $X$ such that the induced action of $\tau$ on $X$ has only one orbit (i.e., a $2x+1$-cycle over the elements of $X$).
Since $\Oo{D}{G}$ is even we may apply Lemma \ref{lem:eag} to find $\pi \in \pstab_G(\C \Sm \bigsqcup Y)$ of order \(x\) respecting $Y$ such that the induced action of $\pi$ on $Y$ has only one orbit.

Let $\sigma \seteq \pi\tau$. The order of $\sigma$ is the least common multiple of $2x+1$ and $x$, which is $2x^2+x$ as $2x+1$ and $x$ are coprime. 
 In particular, the order of $\sigma$ is finite.

Let $R$ be the graph with vertex set $\Ss{(C \sqcup D)\sigma^k}{k \in \Z}$ and an edge between two vertices if they properly intersect. Note that $R$ is a finite graph. Since $2x+1$ and $x$ are coprime the graph $R$ is connected. Since $R$ is connected and $\bigcup\limits_{k \in \Z}(C \sqcup D)\sigma^k = \mathfrak{C}$ it follows from repetitive application of Lemma \ref{lem:SoP} that the set $\{\sigma\} \sqcup \pstab_G(\C \Sm (C \sqcup D))$ generates $G$.

Therefore it is sufficient to find $\zeta \in G$ of order $n$ such that $\An{\sigma,\zeta} \supseteq \{[\psi_i,\omega_i]\}_{0 \leq i < j}$.
By the choice of $x$ it follows that the supports of the elements in the set
\[\{\psi_i^{\left(\sigma^i\right)}\}_{1 \leq i \leq j} \sqcup \{\omega_i^{\left(\sigma^{i(j+1)}\right)}\}_{1 \leq i \leq j}\]
 are disjoint so the product of all of them is well defined even without an order specified. Let $\zeta$ be equal to this product, which has order $n$.

We will show for each $1 \leq k \leq j$ the commutator $\left[\zeta^{\left(\sigma^{-k}\right)},\zeta^{\left(\sigma^{-k(j+1)}\right)}\right]$ is equal to the commutator $[\psi_k,\omega_k]$. This is sufficient to complete the proof.

Let $1 \leq k \leq j$ be given. Note
\begin{align*}
\suppp{\zeta}
& = \bigsqcup\limits_{1\leq i \leq j} \suppp{\psi_i^{\left(\sigma^i\right)}}
\sqcup
\bigsqcup\limits_{1\leq i \leq j}\suppp{\omega_i^{\left(\sigma^{i(j+1)}\right)}} \\
& = \bigsqcup\limits_{1\leq i \leq j} (C \sqcup D) \sigma^i \sqcup (C \sqcup D) \sigma^{i(j+1)}.
\end{align*}
Now $\suppp{\zeta^{\left(\sigma^{-k}\right)}} \cap \suppp{\zeta^{\left(\sigma^{-k(j+1)}\right)}}$ is equal to the intersection of
\[\left(\bigsqcup\limits_{1\leq i \leq j} (C \sqcup D) \sigma^i \sqcup (C \sqcup D) \sigma^{i(j+1)}\right)^{\left(\sigma^{-k}\right)}\]
and
\[\left(\bigsqcup\limits_{1\leq i \leq j} (C \sqcup D) \sigma^i \sqcup (C \sqcup D) \sigma^{i(j+1)}\right)^{\left(\sigma^{-k(j+1)}\right)}\]
which by Lemma \ref{lem:num} is equal to $C \sqcup D$. Here note we are relying on the fact that $x$ is at least $2j(j+1)$. It only remains to note that $\left[\zeta^{\left(\sigma^{-k}\right)},\zeta^{\left(\sigma^{-k(j+1)}\right)}\right]$ agrees with $[\psi_k,\omega_k]$ on $C \sqcup D$.
\end{proof}

\begin{cor}
If $G$ is in $\Kf$ then $G$ is $2$-generated as a semigroup.
\end{cor}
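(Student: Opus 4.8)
The plan is to deduce this directly from Theorem \ref{thm:sub2}, exploiting the fact that the two generators it produces are \emph{torsion} elements. First I would apply Theorem \ref{thm:sub2} to $G$ (taking, say, $n=2$) to obtain elements $\sigma,\zeta\in G$ with $\sigma$ of finite order, $\zeta$ of finite order, and $\An{\sigma,\zeta}=G$. The feature that matters here is not which orders these elements have, but merely that both orders are finite.

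The key observation is then elementary: if $g\in G$ has finite order $m$, then $g^{-1}=g^{m-1}$ is a \emph{positive} power of $g$, and so lies in the subsemigroup of $G$ generated by $g$ alone. Applying this to both $\sigma$ and $\zeta$, the subsemigroup $S$ of $G$ generated by $\{\sigma,\zeta\}$ already contains $\sigma^{-1}$ and $\zeta^{-1}$, and hence contains every finite product of the generators together with their inverses. (The identity causes no difficulty either, being a positive power of $\sigma$.) Therefore $S$ contains the whole group $\An{\sigma,\zeta}=G$, so $S=G$ and $G$ is generated as a semigroup by the two elements $\sigma$ and $\zeta$.

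The step that does the real work is of course Theorem \ref{thm:sub2} itself; granting that theorem there is no genuine obstacle, since the finiteness of the orders of the generators is exactly what upgrades group-generation to semigroup-generation at no additional cost. The only point to verify is that Theorem \ref{thm:sub2} supplies torsion generators in \emph{both} coordinates, which it does, and this is precisely why semigroup $2$-generation follows immediately.
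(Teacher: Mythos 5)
Your proof is correct and is essentially the paper's own argument: the paper simply says the corollary ``follows immediately from Theorem \ref{thm:sub2},'' and the implicit content of that immediacy is exactly your observation that the two generators are torsion, so each inverse (and the identity) is a positive power of a generator, making the subsemigroup they generate equal to the group. You have merely written out the details the paper leaves unsaid.
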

\begin{proof}
This follows immediately from Theorem \ref{thm:sub2}.
\end{proof}

\begin{exmp}
The group $V_2$ is a simple \vig{} and finitely generated subgroup of $\autc$ so Theorem \ref{thm:sub2} applies and $V_2$ is $2$-generated. 
\end{exmp}
%\begin{proof}
%The group $V_2$ is \vig{} as mentioned in subsection %\ref{CS}. That $V_2$ is simple and finitely generated is %proved in \cite{fpog} and earlier by Thompson himself in %unpublished notes \cite{Thompson}.
%\end{proof}

We note that Mason shows in \cite{MASON} that $V_2$ is $2$-generated, while later, 
Bleak and Quick in \cite{BleakQuick} find elements  they denote as
$u$, $v\in V_2$ so that $V_2=\langle u,v\rangle$ where $|u|=6$ and 
$|v|=3$.  It is immediate from Theorem \ref{thm:sub2} that $V_2$ admits two torsion 
elements $\sigma$, $\zeta$ with $V_2=\langle \sigma,\zeta\rangle$ where $|\zeta|=2$.  Mark Sapir in personal communication 
with the first author asked whether $V_2$ is a quotient of 
PSL$_2(\Z)\cong C_3*C_2$ (so in particular, can $\sigma$ and $\zeta$ as above be found with $|\sigma|=3$).
\begin{exmp}
 Similar to the above, the simple R\"over-Nekrashevych groups of \cite{Nekra04} are all two generated, being finite-index (yet still vigorous) simple subgroups of specific finitely generated over-groups of the groups $V_n$.  See \cite{Roever,Nekra04,SkipperWitzelZaremsky} for more on these groups.   
\end{exmp}

\begin{exmp}
In our final example, after \cite{HennigMatucci} it was known that $nV$ admitted a presentation with $2n+4$ generators.  Martyn Quick shows in \cite{Quick} that for all $n$, the group $nV$ admits a finite presentation with two generators (matching our result here, but also providing explicitely the generators).
\end{exmp}

\section{Conclusion\label{sec:conc}}%%%%%%%%%%%%%%%%%%%%%%%%%%%%%%%%%%%%%%%%%%%%%%%%%%%%%%%%%%%%%%%%%%%%%%%%%%%%%%%%%%%%%%%%%%%%%%%%%%%%%%%%%%%%%%%%%%%%%%%%%%%%%%%%%%%%%%%%%%%%%

\begin{que} \label{fps}
Does there exist a finitely presented simple group that is not $2$-generated?
\end{que}

Question \ref{fps} is well-known and has been partially answered by the Classification of Finite Simple Groups from which it follows that if there are finitely presented simple groups which are not $2$-generated then they must be infinite. There are examples of simple groups which are finitely generated but not $2$-generated as shown by Guba in \cite{GUBA} and there are certainly examples of finitely presented non-$2$-generated groups and examples of finitely presented simple groups so there are no obstructions to any proper subset of the demands of the question.

Epstein in \cite{Epstein} provides three axioms under which a group of homeomorphisms of a space will be simple (or at least, will have a simple commutator subgroup).  One of these is that there is a countable basis of open sets upon which the group acts transitively, another subsumes the property that the group is generated by elements which are with support contained in these basic open sets.  These properties seem not far from the properties we have used in this paper to find $2$ element generating sets for finitely generated groups of homeomorphisms of the Cantor set.

For this reason, it may be an interesting project for the future to try to adapt the argument of the main theorem of this paper to the context of Epstein's Axioms, and thus investigate the following question.

\begin{que} If $G$ is a finitely generated simple group of homeomorphisms which satisfies Epstein's Axioms, must $G$ actually be two-generated?
\end{que}

%We should change the Proposition below into either of the (equivalent) Propositions:
%The family of isomorphism classes of subgroups of groups in $\K$ is equal to the family of isomorphism classes of countable groups.
%If $G$ is a countable group then there exists a group $H$ in $\Kf$ and an embedding of $G$ into $H$.
The following result is well known.  Note that the proof we give does not result in a vigorous group. 
\begin{prop} \label{prop:acige}
The group $Sym(\mathbb{N})$ embeds in $\autc$.
\end{prop}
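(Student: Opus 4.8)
The plan is to realise $\C$ explicitly as the coordinate space $\{0,1\}^\N$ (which is legitimate, since by Brouwer's characterisation every Cantor space is homeomorphic to this one) and then let $\It{\N}$ act by permuting coordinates. Concretely, for $\pi\in\It{\N}$ and $x=(x_n)_{n\in\N}\in\{0,1\}^\N$ I would define $x\pi$ to be the point whose $n$-th coordinate is $x_{n\pi^{-1}}$. A direct check of the coordinate bookkeeping gives $(x(\pi\sigma))_n=x_{n\sigma^{-1}\pi^{-1}}=((x\pi)\sigma)_n$, so this is a genuine right action (consistent with the paper's conventions), and hence $\pi\mapsto(x\mapsto x\pi)$ is a group homomorphism from $\It{\N}$ into the group of self-bijections of $\{0,1\}^\N$.

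Next I would verify that each such coordinate permutation is in fact a homeomorphism, so that the homomorphism actually lands in $\autc$. The key observation is that a basic clopen set of $\{0,1\}^\N$ is a cylinder constraining only finitely many coordinates, say $\{x : x_i=a_i \text{ for } i\in S\}$ with $S$ finite. Its preimage under $x\mapsto x\pi$ is $\{x : x_{i\pi^{-1}}=a_i \text{ for } i\in S\}$, which is again a cylinder, now constraining the finite coordinate set $S\pi^{-1}$. Hence $x\mapsto x\pi$ is continuous; its inverse is $x\mapsto x\pi^{-1}$, continuous by the identical argument, so each $x\mapsto x\pi$ is a homeomorphism. This is the only step needing any care, because $\pi$ may have infinite support; but since every basic open set depends on only finitely many coordinates, infinite support causes no difficulty at all.

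Finally I would check faithfulness. Given a non-identity $\pi$, choose $n$ with $n\pi\neq n$ and let $e_n$ be the point that is $1$ in coordinate $n$ and $0$ elsewhere. Then $e_n\pi$ is the indicator of $\{n\pi\}$, which differs from $e_n$, so $x\mapsto x\pi$ is not the identity homeomorphism; thus the homomorphism is injective and $\It{\N}$ embeds in $\homeo(\{0,1\}^\N)\cong\autc$. I expect no serious obstacle here: the whole construction is elementary, and (as the surrounding text remarks) the image is visibly far from being \vig{}, since coordinate permutations preserve a great deal of the structure of $\C$. As an aside, because every countable group embeds in $\It{\N}$ by Cayley's theorem, this argument also recovers the claim from the introduction that every countable group can be realised as a group of homeomorphisms of $\C$.
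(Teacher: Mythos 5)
Your proof is correct, but it is a genuinely different construction from the one in the paper. You let $\It{\N}$ act on $\{0,1\}^\N$ by permuting coordinates, $(x\pi)_n = x_{n\pi^{-1}}$; the paper instead encodes $\N$ inside $\C$ via the cones $\overline{0^n1}$ accumulating at the point $0^\omega$, and lets $g\in\It{\N}$ act by rewriting the block of leading zeros, sending $0^n1y$ to $0^{ng}1y$ and fixing $0^\omega$. The trade-off is mainly in where the topological work sits. In your version continuity is completely routine and uniform: the preimage of a cylinder constraining the finite coordinate set $S$ is the cylinder constraining $S\pi^{-1}$, so no point of $\C$ needs special treatment, and faithfulness is witnessed by the indicator points $e_n$. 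In the paper's version faithfulness is equally obvious (the cone $\overline{0^n1}$ is carried onto $\overline{0^{ng}1}$), but continuity at the accumulation point $0^\omega$ genuinely needs the observation that a bijection of $\N$ sends sequences tending to infinity to sequences tending to infinity (the preimage of $\{m : mg\le M\}$ is finite); the paper compresses this into ``by inspection.'' Both constructions yield images that are visibly far from \vig{}, consistent with the remark preceding the proposition, and your closing aside (composing with Cayley's theorem to embed any countable group) is exactly how the paper derives its subsequent corollary.
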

\begin{proof}
For this proof we will think of elements of $\C$ as infinite words over $\{0,1\}$. Let $g$ be in $Sym(\mathbb{N})$ and let $x$ be in $\C$. If $x$ contains only the letter $0$ then let $g$ fix $x$. If the first $1$ of $x$ is the $(n+1)$th digit of $x$ then remove the first $n$ digits (all of which will be $0$) and replace them with a word of length $ng$ containing only the letter $0$ and define $xg$ to be the result. Extend this principle to define the action of $Sym(\mathbb{N})$ on the Cantor set. By inspection $Sym(\mathbb{N})$ now acts faithfully as desired.
\end{proof}

The following corollary is immediate from Cayley's Theorem.
\begin{cor}\label{cor:countableembeds}
All countably infinite groups embed in $\autc$.
\end{cor}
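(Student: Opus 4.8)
The plan is simply to chain together Cayley's theorem with the embedding of $Sym(\N)$ already produced in Proposition \ref{prop:acige}. First I would recall Cayley's theorem in its standard form: every group $G$ acts faithfully on its own underlying set by right multiplication, yielding an injective homomorphism $G \hookrightarrow Sym(G)$, where $Sym(G)$ denotes the full symmetric group on the set $G$. This step requires nothing beyond the usual verification that $g \mapsto (h \mapsto hg)$ is a homomorphism with trivial kernel.

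Next I would use the hypothesis that $G$ is countably infinite. This means the underlying set of $G$ is in bijection with $\N$, and any such bijection $\phi\colon G \tooo \N$ induces an isomorphism $Sym(G) \cong Sym(\N)$ by conjugation (sending a permutation $\pi$ of $G$ to $\phi^{-1}\pi\phi$, a permutation of $\N$). Composing the Cayley embedding with this isomorphism gives an injective homomorphism $G \hookrightarrow Sym(\N)$.

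Finally, Proposition \ref{prop:acige} provides an injective homomorphism $Sym(\N) \hookrightarrow \autc$. Composing the two embeddings $G \hookrightarrow Sym(\N) \hookrightarrow \autc$ yields an injective homomorphism $G \hookrightarrow \autc$, as required, since a composition of injective homomorphisms is an injective homomorphism.

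I do not expect any genuine obstacle here: the result is a routine composition of three facts, two of which (Cayley's theorem and Proposition \ref{prop:acige}) are already in hand, and the third (that $Sym$ of a countable set is $Sym(\N)$ up to isomorphism) is immediate from cardinality. The only point deserving even a moment's care is that the action in Proposition \ref{prop:acige} need not preserve any of the dynamical properties studied earlier (vigorousness in particular), but since the corollary asserts only an abstract group embedding into $\autc$, this is irrelevant to the statement at hand.
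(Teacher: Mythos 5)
Your proof is correct and is exactly the paper's argument: the paper states the corollary is ``immediate from Cayley's Theorem,'' meaning precisely the composition $G \hookrightarrow Sym(G) \cong Sym(\N) \hookrightarrow \autc$ that you spell out, with the last embedding given by Proposition \ref{prop:acige}. Your closing remark that the resulting action need not be vigorous also matches the paper's own caveat.
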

 From the above, one might hope to achieve vigorous realisations of all finitely presented simple groups and thus show that all finitely presented simple groups are two generated.  However, this fails as follows.
 
\begin{prop}
Every vigorous simple group has torsion elements.
\end{prop}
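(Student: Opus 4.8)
The plan is to produce an explicit non-trivial element of finite order by embedding a single $3$-cycle, reusing the machinery already developed for finite alternating groups acting on translates of a clopen set. Let $G \leq \autc$ be vigorous and simple. First I would invoke Theorem \ref{thm:fes}: since $G$ is vigorous and simple, it is in particular approximately full (and perfect), so the hypotheses needed to apply Lemma \ref{lem:eag} are available.

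Next I would set up the combinatorial data for Lemma \ref{lem:eag}. Choose a partition of $\C$ into four non-empty clopen sets $C, E_2, E_3, E_4$. Using that $G$ is vigorous (Definition \ref{move}) with the containing set taken to be $\C$, I would find $\gamma_2, \gamma_3 \in G$ with $C\gamma_2 \Se E_2$ and $C\gamma_3 \Se E_3$; together with $\gamma_1 \seteq 1_G$ this produces three pairwise distinct group elements whose translates $C\gamma_1 = C$, $C\gamma_2$, $C\gamma_3$ are pairwise disjoint and whose union avoids $E_4$, hence is a proper subset of $\C$. Thus $\Gamma \seteq \{\gamma_1, \gamma_2, \gamma_3\}$ satisfies the disjointness hypothesis of Lemma \ref{lem:eag}.

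I would then apply Lemma \ref{lem:eag} with $g$ the $3$-cycle $(\gamma_1\,\gamma_2\,\gamma_3)$. Since a $3$-cycle is an even permutation, the lemma yields $\delta \in G$ with support contained in $C\gamma_1 \cup C\gamma_2 \cup C\gamma_3$ agreeing with $\gamma_i^{-1}(\gamma_i g)$ on each $C\gamma_i$; concretely $\delta$ carries $C\gamma_1 \to C\gamma_2 \to C\gamma_3 \to C\gamma_1$. A direct check shows $\delta^3$ fixes each $C\gamma_i$ pointwise and $\delta$ is the identity off their union, so $\delta^3 = 1_G$, while $\delta \neq 1_G$ because it sends $C = C\gamma_1$ into the disjoint set $C\gamma_2$. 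Hence $\delta$ is a non-trivial element of order $3$, and $G$ has torsion.

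Because the heavy lifting is delegated to Lemma \ref{lem:eag}, I do not expect a serious obstacle; the only points demanding care are verifying that the three translates can be arranged to be pairwise disjoint with union a proper subset of $\C$ (which vigorousness supplies directly, independently of the orbit structure on $\Ooo{G}$) and confirming that the element $\delta$ so produced is genuinely non-trivial rather than accidentally the identity. Both are settled by the explicit construction above.
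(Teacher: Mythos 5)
Your proof is correct, but it takes a genuinely different route from the paper. The paper's proof is a two-line argument via condition (\ref{fesmat}) of Theorem \ref{thm:fes}: since $G$ is the commutator subgroup of its own full group $H$, and $H$ (being full) admits torsion elements of small support, one commutates such a torsion element $t$ with an element of $H$ moving $\supt{t}$ entirely off itself; the resulting commutator is a product of two commuting non-trivial torsion elements, hence non-trivial torsion, and lies in $[H,H]=G$. You instead use condition (\ref{fespaf}) of Theorem \ref{thm:fes} (approximately full) and then apply Lemma \ref{lem:eag} with an even permutation, namely a $3$-cycle on three pairwise disjoint translates of a clopen set, to produce an explicit element of order $3$ inside $G$ itself. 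Your route buys concreteness and self-containment: it never leaves $G$, it avoids the paper's unproved (though easy) claim that full groups admit torsion elements of small support, and it exhibits the torsion element and its order explicitly. Notably, your mechanism is the same one the paper itself uses in the proof of Theorem \ref{thm:sub2} to show the set $P$ of order-$n$ elements is non-empty, so it is very much in the spirit of the paper even though it differs from the proof the paper gives for this particular proposition. The paper's argument, in exchange, is shorter and reuses the full-group characterisation. All the steps you flag as needing care do check out: the three translates $C$, $C\gamma_2\Se E_2$, $C\gamma_3\Se E_3$ are pairwise disjoint by construction, the three group elements are pairwise distinct precisely because those translates are disjoint and $C\neq\varnothing$, and $\delta\neq 1_G$ since $C\delta=C\gamma_2$ is disjoint from $C$.
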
 
\begin{proof}

A vigorous simple group $G$, by Theorem \ref{thm:fes}, is the commutator subgroup of its own full group.  Any such full group would admit torsion elements of small support, and by commutating one of these with an element moving that support fully off of itself we obtain a non-trivial torsion element in $G$.
\end{proof}

In particular, the Burger-Mozes torsion-free finitely presented simple groups \cite{BurgerMozes} admit no vigorous realisation in $\autc$. 

We remain interested in generalisations of the work in this paper from Cantor spaces to other spaces of interest (such as manifolds).  In such context, a natural generalisation of our concept of vigorous could be as follows.

\begin{defn}
Let $X$ be a topological space.  Let $H\leq \homeo(X)$.  We will say that a subset $G$ of $H$ is \emph{\vig{} with respect to $H$} if and only if for all $A,B,C$ open non-empty subsets of $X$ so that $\overline{B}$ and $\overline{C}$ are proper subsets of $A$  where there exists $h$ in $H$ with $\suppp{h} \Se A$ so that $\overline{B}h \Se C$, there is $g\in G$ with $\suppp{g}\Se A$ so that $\overline{B}g\Se C$.
\end{defn}

If $X=\C$ and $H=\homeo(\C)$ then a group $G$ being vigorous with respect to $H$ implies $G$ is vigorous. The generality of the definition above allows us to suppose that by restricting attention to less capable groups than $\homeo(X)$ we can still obtain interesting generation results.  For example, if $X$ had a measure, and if $H$ were the measure preserving subgroup of $\homeo(X)$, then an obstacle to a proof approach analogous to that of Theorem \ref{sub2} is that we frequently map clopen sets into proper subsets of themselves. Such maps are unlikely to respect the ambient measure.

This leads us to the following question.
\begin{que}
For which space-group pairs $(X,H)$ is it the case that if $G\leq \homeo(X)$ is finitely generated, simple and vigorous with respect to $H$ then $G$ is two-generated?
\end{que}

The following two questions are in part motivated by similar considerations for a family $\mathscr{F}$ of groups inspired by the theory of universal sequences \cite{Galvin95,HJMP16} and described below. 

Firstly, let $\mathscr{L}$ be the family of those $2$-generated groups $H$ such that for any group $G$ in $\Kf$ there exists an epimorphism from $H$ to $G$.

Note that by Theorem~\ref{thm:sub2}, $\mathscr{L}$ contains \(C * \mathbb{Z}\) for all non-trivial cyclic groups \(C\).

\begin{que}
What more can be said about the family $\mathscr{L}$?% of those $2$-generated groups $H$ such that for any group $G$ in $\Kf$ there exists an epimorphism from $H$ to $G$.
\end{que}

Let \(F_{\mathbb{N}}\) denote the free group of counably infinite rank.
Let $\mathscr{F}$ be the family of those $2$-generated groups $G$ such that there exists a homomorphism $\phi:F_\N \to G$ such that for any homomorphism $\psi:F_\N \to \It{\N}$ there exists a homomorphism $\rho:G \to \It{\N}$ such that $\phi\rho = \psi$.

Clearly groups in $\mathscr{L}$ or $\mathscr{F}$ must be lawless, have exponential growth, and have uncountably many normal subgroups.

Galvin effectively shows in the proof of Theorem 4.3 of \cite{Galvin95} that for any $i\geq 3$ and even \(j\geq 4\), the group $C_i*C_j$ is in $\mathscr{F}$, which motivates the following question.

%Galvin effectively shows in \cite{Galvin95} that for any $i,j$ both at least $2$ and not both $2$ the group $C_i*C_j$ is in $\mathscr{F}$, which motivates the following question.

\begin{que}\label{que:2-3gen}
Are there elements of $\Kf$ which are not quotients of $C_2*C_3$?
\end{que}

The families $\mathscr{L}$ and $\mathscr{F}$ both arise as being compatible with finite generation in uncountably many ways corresponding to groups that are infinite versions of finite symmetric groups.

\hrulefill
\vspace {.1 in}
{\flushleft \small Collin Bleak\\
School of Mathematics and Statistics, \\
University of St Andrews,\\
St Andrews, Scotland, KY8 6ER,\\
(e-mail): cb211@st-andrews.ac.uk}

{\flushleft \small  Luke Elliott\\
School of Mathematics and Statistics,\\
University of St Andrews,\\
St Andrews, Scotland, KY8 6ER,\\ (e-mail): le27@st-andrews.ac.uk}

{\flushleft \small  James Hyde\\
310 Malott Hall,\\
Cornell University,\\
Ithaca, New York 14853,\\
(e-mail): jth263@cornell.edu}

\vspace {.1 in}

{\flushleft Competing interests: the authors declare none.}

\hrulefill

\bibliographystyle{amsplain}
\bibliography{f_autq}

\end{document}